\newcommand{\preprint}{} % journal version for preprint
\preprint
\renewcommandx{\P}[2][2=]{\ifthenelse{\isempty{#2}}{\mathbb{P}\left\{#1\right\}}{\mathbb{P}\left\{#1 \ |\ #2\right\}}} % \operatorname{Pr}
\newcommand{\E}[1]{\mathbb{E} #1}%{\operatorname{E}[#1]}%{\mathbb{E}[#1]}
\renewcommand{\deg}[1]{\operatorname{deg}(#1)}
\newcommand{\p}{\beta}
\renewcommand{\r}{r}
\newcommand{\rr}{\gamma}
\newcommand{\pp}{p}
\newcommand{\G}[1][\r]{G^{#1}_{n,p}}
\newcommand{\X}{Y}
\newcommand{\XX}{\mathcal{X}}
\newcommand{\YY}{\mathcal{Y}}
\newcommand{\ZZ}{\mathcal{Z}}
\newcommand{\Exp}[1]{\exp\left\{#1\right\}}
\newcommand{\Stab}{\text{STAB}}
\newcommand{\greedy}[1][]{
\texttt{Gr}
\ifthenelse{\isempty{#1}}{}{\texttt{-min}}
}
\setvalue{\MyTitle}{Large independent sets in \revise{recursive Markov random graphs}}
\setvalue{\MyRuntitle}{Large independent sets in \revise{recursive Markov random graphs}}
\setvalue{\MyAbstract}{
Computing the maximum size of an independent set in a graph is a famously hard combinatorial problem that has been well-studied for various classes of graphs. When it comes to random graphs, only the classical \revise[2]{Erd\H{o}s-R\'enyi-Gilbert} random graph $G_{n,p}$ has been analysed and shown to have largest independent sets of size $\Theta(\log{n})$ w.h.p. This classical model does not capture any dependency structure between edges that can appear in real-world networks. We initiate study in this direction by defining random graphs $\G$ whose existence of edges is determined by a Markov process that is also governed by a decay parameter $\r\in(0,1]$. We prove that w.h.p. $\G$ has independent sets of size $(\frac{1-\r}{2+\epsilon}) \frac{n}{\log{n}}$ for arbitrary $\epsilon > 0$. This is derived using bounds on the terms of a harmonic series, Tur\'an bound on stability number, and a concentration analysis  for a certain sequence of dependent Bernoulli variables that may also be of independent interest. Since $\G$ collapses to $G_{n,p}$ when there is no decay, it follows that having even the slightest bit of dependency (any $\r < 1$) in the random graph construction leads to the presence of large independent sets and thus our random model has a phase transition at its boundary value of $r=1$. \revise{This implies there are large matchings in the line graph of $\G$ which is a Markov random field.} For the maximal independent set output by a greedy algorithm, we deduce that it has a performance ratio of at most $1 + \frac{\log{n}}{(1-\r)}$ w.h.p. when the lowest degree vertex is picked at each iteration, and also show that under any other permutation of vertices the algorithm outputs a set of size $\Omega(n^{1/1+\tau})$, where $\tau=1/(1-\r)$, and hence has a performance ratio of $O(n^{\frac{1}{2-\r}})$.
}
\setvalue{\MyKeywords}{Independent sets, Greedy algorithm, Concentration inequalities, Tur\'an's theorem, Dependent Bernoulli sequence}
\setvalue{\MySubjclass}{90C27, 60J10, 05C80, 05C69}
\setvalue{\MyDate}{Submitted August 2022; Revised December 2023, April 2024}
\setvalue{\NumAuthors}{2}
\renewcommand{\authordef}[1]{%
\ifcase#1\relax \setvalue{\authorname}{Akshay Gupte} \setvalue{\authoraffil}{School of Mathematics and Maxwell Institute for Mathematical Sciences, The University of Edinburgh, UK} \setvalue{\authoremail}{akshay.gupte@ed.ac.uk}

\or \setvalue{\authorname}{Yiran Zhu} \setvalue{\authoremail}{yiran.zhu@ed.ac.uk}

\else Jane Doe.
\fi
}
\begin{document}

%--------------------------------
% JOURNAL STYLING

\ifdefined \preprint
	\author{\authordef{0}\MyAuthor \and \authordef{1}\MyAuthor[1]}
	\renewcommand{\revise}[2][]{#2}
	\renewcommand{\changes}[1]{#1}
	\title{\MyTitle}
\date{\MyDate}

\maketitle

\begin{abstract}
{\MyAbstract}

\mykeywords{\MyKeywords}

\mysubjclass{\MySubjclass}
\end{abstract}
\else\ifdefined \informs
	\MANUSCRIPTNO{MOR-2022-215-R2}
	\ARTICLEAUTHORS{
	\AUTHOR{Akshay Gupte, Yiran Zhu}
	\RUNAUTHOR{Gupte and Zhu}
	\AFF{School of Mathematics and Maxwell Institute for Mathematical Sciences, University of Edinburgh, UK, \EMAIL{\{akshay.gupte, yiran.zhu\}@ed.ac.uk}}
	}
	\AREAOFREVIEW{Discrete Optimization}
	\FUNDING{The initial phase of this research was supported by NSF grant DMS-1913294.}
	\maketitle
\else
\fi
\fi
%--------------------------------

\section{Introduction}	\label{sec:intro}

An independent set in a graph $G = (V,E)$ is a subset of $V$ such that no two vertices in this subset have an edge between them.  The maximum cardinality of an independent set in $G$ is called the stability number $\alpha(G)$, and this is a difficult combinatorial problem that is strongly NP-hard to compute exactly and also to approximate within factor arbitrarily close to $\abs{V}$. %\cite{haastad1999clique}. 
There is vast amount of literature on approximating this number for general and special graphs, in theory and also computationally through optimization algorithms \cite[cf.][]{bomze1999maximum,gaar2022sdp,galluccio2009gear,Rebennack2009branch}. Graphs generated through some randomisation technique are natural candidates for analysis of many structural graph properties. The most basic class of random graphs is the classical \revise[2]{Erd\H{o}s-R\'enyi-Gilbert}
 random graph, also called the \revise[2]{Erd\H{o}s}-R\'enyi-Gilbert random graph. This is denoted by $G_{n,p}$ and is a $n$-vertex graph where each edge has a given fixed probability $p$ of being present in the graph.\footnote{To be precise, the random graph model proposed by \revise[2]{Erd\H{o}s} and R\'enyi fixes the number of edges instead of fixing the edge probability, but it is well-known that the two definitions are equivalent.} There is rich history on bounding $\alpha(G_{n,p})$ asymptotically  and it is known that the largest independent sets in $G_{n,p}$ are of $\revise{\Theta(\log{n})}$ with high probability (w.h.p.) \cite{bollobas1976cliques,frieze1990independence,matula1976largest}. Bounds have also been computed by solving convex optimization problems such as those from the Lov\'asz $\vartheta$-function and the Lov\'asz-Schrijver lift-and-project operator, %\cite{galli2017lovasz,giandomenico2009application} 
and the latter produce a tight relaxation w.h.p. after being applied for $\Theta(\log{n})$ rounds on $G_{n,1/2}$ \cite{feige2003probable}. The analysis of $\alpha(G_{n,p})$ has spawned interest in the chromatic number $\chi(G_{n,p})$, which is related as per the relation $\chi(G)\alpha(G)\ge n$ for any $n$-vertex graph $G$, % \ge \lceil\frac{ \abs{V}}{\chi(G)}\rceil$,  
and its concentration \cite[cf.][]{coja2008chromatic,heckel2021non,mcdiarmid1990chromatic}. %bollobas1988chromatic, shamir1987sharp, luczak1991chromatic
Many other properties of $G_{n,p}$ have also been studied in great detail; see recent texts such as the monograph \cite{Frieze2016} and \cite[chap.~10]{Bremaud2017Discrete}. 

We continue this line of work on asymptotic analysis of $\alpha(G)$ but initiate it on a new class of random graphs for which $G_{n,p}$ is a boundary condition. Random graph models have been used for analyzing topological structures of networks, such as social network, citation network, \changes[2]{internet}, etc. \cite{newman2002random,barabasi2016network,fosdick2018configuring,robins2007introduction}, and independent sets present some important properties of these networks \revise{since they represent the group of people who are not known to each other. There are also some specific examples such as the Nash equilibrium in public goods networks being formulated using the maximal independent set \cite{bramoulle2007public}. }  Typical network models have non-uniform edge probability and  nonzero correlations between edges, \revise[2]{and any such variation makes the random graph inhomogeneous. There are various ways of constructing these \cite[cf.][chap.~1]{bollobas2008handbook}, usually through a recursive generation of the edges with some underlying pattern. A common family is the preferential attachment model due to \citet{barabasi1999emergence} where each new vertex  in the generation process is more likely to be connected to an existing vertex which already has many edges. They are sometimes also referred to as scale-free random graphs. Many of their combinatorial properties, such as degree distributions, connectivity, diameter, etc. have theoretical guarantees \cites{mihail2006certain}[][chap.~1--4]{bollobas2008handbook}[][chap.~16 and 19]{Frieze2016}, which can lead to bounds on the stability number by exploiting connections (in any graph) between certain graph parameters and sizes of independent sets. Another class of inhomogeneous random graphs which has been gaining traction with its many applications in social networks is the exponential random graph model \cite{robins2007introduction}. These have a complex generation process that allow control over some graph parameters, such as number of triangles, stars, etc. Here, there have been empirical studies in practical networks \cite{robins2009closure}, but we are not aware of theoretical bounds on sizes of independent sets. However, there are some random graph families, having a construction that imposes implicit dependency between edges, whose stability or clique number has been explicitly characterized}, such as random regular graphs \cite{Ding2016},   \revise[2]{those generated through hyperbolic geometry \cite{blasius2018cliques}, and those where edge probability is a scaled product of random (iid) vertex weights with some distribution \cite{bogerd2020cliques,janson2010large}.} This paper creates a new \changes[2]{inhomogeneous} random graph model \changes[2]{possessing an explicit dependence between edges and} that can be further scrutinised for many of its properties as has been done for the \changes[2]{other models from literature}.

The edges in our random graph are generated dynamically using a Markov process. Given $n$, $p$ and a decay parameter $\r\in(0,1]$, starting from the singleton graph $(\{v_{1}\},\emptyset)$,  a graph $\G$ having $n$ vertices is generated in $n-1$ iterations where at each iteration $t\ge 2$, the vertex $v_{t}$ is added to the graph and edges $(v_{i},v_{t})$ for $1\le i \le t-1$ are added as per a Bernoulli r.v. $X_{i}^{t}$. The success probability of $X_{i}^{t}$ is equal to $p$ for $i=1$ and for $i\ge 2$, it is independent of the values of $\{X^{t}_{1},\dots,X^{t}_{i-2} \}$ and is equal to the success probability of $X_{i-1}^{t}$ when $X^{t}_{i-1}=0$, and is reduced by a factor $\r$ when $X^{t}_{i-1}=1$.

%Random graph models are recently used for analyzing topological structures of networks, such as social network, citation network, and et al. \cite{dorogovtsev2013evolution}. Typical network models have non-uniform edge probability and potential non-zero correlations. For example, let $v_{k}$ be a new node added into graph and $X^{k}_{i}$ be Bernoulli variable representing edge $(v_{i}, v_{k})$ i.e. $(v_{i}, v_{k})$ presented if and only if $X^{k}_{i}=1$. The simplest non-equilibrium model \cite{dorogovtsev2013evolution} defines
%\[
%\P{X^{k}_{i}=1} =  \frac{1}{k}
%\]
%where all these variables $\{X^{k}_{i}\}_{i<k}$ are mutually independent. However, the modern networks, such as Barabási-Albert model  \cite{RevModPhys.74.47}, usually assume dependency between subset of edge Bernoulli variables.  Different from their strengths in fitting real-life cases as shown in simulation studies \cite{CHASSIN2005667}, their theoretical property is difficult to analyze. Recent work \cite{Chalupa2014} on the stability number of BA model is focusing solely on simulation study. 

\begin{definition}\label{def:markov-graph}
For any $p, \r \in (0,1]$, a \emph{\revise{recursive Markov random graph}} $\G$ is a random graph on $n$ vertices $\{v_{1},\dots,v_{n}\}$ wherein the edge probabilities are defined as follows : %Bernoulli random variables, with this 
\begin{enumerate}
\item for $2\le i \le n$, edge $(v_{1},v_{i})$ exists with probability equal to $p$,
\item for $2 \le j < i \le n$, probability of edge $(v_{j},v_{i})$ depends on whether edge $(v_{j-1},v_{i})$ is present or not, and is independent of all other edges, and we have the following dependency structure on their conditional probabilities, 
\begin{equation*}	\label{recurrence1}
\begin{split}
\P{\text{edge $(v_{\revise{j}},v_{i})$ exists}}[\text{edge $(v_{\revise{j-1}},v_{i})$ does not exist}] &= \P{\text{edge $(v_{\revise{j-1}},v_{i})$ exists}} \\
\P{\text{edge $(v_{\revise{j}},v_{i})$ exists}}[\text{edge $(v_{\revise{j-1}},v_{i})$ exists}] &= \r \, \P{\text{edge $(v_{\revise{j-1}},v_{i})$ exists}}.
%	\P{X^{k}_{i+1}=1}[X^{k}_{i} = 0] &=  \P{X^{k}_i=1}\\
%	\P{X^{k}_{i+1}=1}[X^{k}_{i} = 1] &= \r \cdot \P{X^{k}_i=1}
\end{split}
\end{equation*}
\end{enumerate}
%of which each vertex $v_k$ is added iteratively and edge variables $X_{i}^{k}$ where $i=1,\ldots k-1$ corresponding to edges $(v_{i}, v_{k})$ are Bernoulli variables with Markov property.
%where $\r\in (0,1)$ is a decay parameter and $p$ is the initial probability $P\{X_1^{k}=1\}=p\in (0,1)$. In particular, we assume $X_i^{k}$ is independent from $X_j^{t}$ for all $i,j$ whenever $k\neq t$. 
\end{definition}
We think of the parameter $p$ as the \emph{initial probability} since it governs the edge probabilities between the first vertex and other vertices, and the parameter $\r$ as the \emph{decay parameter} since it decreases the edge probability by this factor whenever a previous edge (think of the vertices sorted from $1,\dots,n$ and edges between them) is present. Taking $\r=1$ makes all edges have equal probability $p$ and so $\G[1]$ is isomorphic to $G_{n,p}$, thereby making our \revise{recursive} Markov random graph a generalization of the \revise{classical} \revise[2]{Erd\H{o}s}-R\'enyi-Gilbert model.

\revise{\citet[][pp. 835]{Frank1986MarkovGraph} first defined a {Markov random graph} as a random graph whose edges are generated by a stochastic process which is such that any two edges $(v_{i},v_{j})$ and $(v_{k},v_{l})$ that do not share a common vertex (i.e., $i,j,k,l$ are distinct) are conditionally independent given all the other edge variables. The construction of our random graph $\G$ makes it clear that non-adjacent edges are conditionally independent, and so it is a Markov random graph. But it is a special subclass because of how the edges are generated in a recursive manner and has much greater independence properties. In particular, the Markov process used for its generation means that every edge is also independent of co-incident edges when they are not immediate neighbours, i.e., $(v_{j},v_{i})$ is independent of $(v_{k},v_{i})$ for $k < j-1$, and of $(v_{i},v_{k})$ for $k > i$. We note that in the line graph of the \citeauthor{Frank1986MarkovGraph} model, \revise[2]{the Bernoulli variable for any vertex is conditionally independent of all the vertices outside its neighbourhood, implying that the random variables associated with these vertices can be interpreted to form a Markov random field} \cite[cf.][chap.~9]{Bremaud2017Discrete}. In the line graph of our model $\G$, every vertex is independent of all vertices except exactly one adjacent vertex.}

\ifdefined \preprint
	\paragraph{Outline.}
\else
	\subsubsection*{Outline}
\fi

\changes{
Since our random graph model is a generalisation of the classical \revise[2]{Erd\H{o}s-R\'enyi-Gilbert}
 model, the mathematical question naturally arises as to which classical properties extend to our random model. We make the first attempt at understanding this by analysing independent sets, which were  one of the first structures studied in the classical model. We provide two lower bounds and an upper bound on the stability number $\alpha(\G)$. These also imply bounds on the sizes of matchings in the line graph of $\G$.

Our main results are stated in \Cref{sec:results}, along with some of their consequences and discussion of proof techniques, and we also point to parts of the paper where these are proved. Some open questions are also mentioned. \Cref{sec:chain} notes some basic properties of the Bernoulli random variables associated with edges in the graph, including making the crucial observation that there is essentially a single Markov chain that represents all the edges. General technical results that are building blocks of the analysis in this paper are presented in \Cref{sec:technical}, with their proofs at the end in \Cref{sec:missing,sec:bernoulli}. Some of these may also be more widely applicable and of independent interest. The first lower bound is proved in \Cref{sec:lower} using average vertex degrees in the graph, and the second one is in \Cref{sec:greedy} after analysing the performance of the greedy algorithm for computing maximal independent sets. An upper bound on the stability number is derived in \Cref{sec:upper}. In between, \Cref{sec:edge-prob} bounds the various probabilities that arise in our analysis.
}

%We will see later that the stability number of $\G$ is dominated by $\r$ not initial probability $p$ as what we have seen in the ER model.  

%Furthermore, a interesting property (\Cref{avgdeg}) of $\G$ is, every single node has asymptotic degree $\Theta(n)$ but the average degree is $\Theta(\log{n})$.

%$S_n$ is related to the expected degree of nodes in $\G_{n,p,\r}$and it forms a stochastic process $\{S_{n}\}_{n\ge 1}$ that is a monotone sequence of nonnegative random variables.
%The expected value of the partial sum is 
%\[
%\E{S_{n}} \eq \sum_{i\eq1}^{n}\E{X_{i}} \eq \sum_{i\eq1}^{n}p_{i}.
%\]

\ifdefined \preprint
	\paragraph{Notation.}
\else
	\subsubsection*{Notation}
\fi

%We remark that sometimes the notation $\G$ has been used in literature to either indicate that the minimum degree of $G_{n,p}$ is fixed or lower-bounded by a constant.

The random graphs $\G$ form an obvious finite probability space. Random variables in this paper are generally some function of the random graph $\G$. We suppress $\omega$ from a r.v. $\xi(\omega)$ and write it simply as $\xi$. Commonly appearing random variables are the stability number $\alpha(\G)$, vertex degree which is $\deg{v_{i}}$, average degree $d(\G)$, edge $(v_{j},v_{i})$ Bernoulli r.v. $X^{i}_{j}$. We use $\E{}$ to take expected value and $\P{}$ for probability measure, where the corresponding probability space is obvious from context. When a sequence of random variables $\{\xi_{n}\}$ on the same probability space converges to another random variable $\xi$, we denote it as $\xi_{n} \overset{p}{\longrightarrow} \xi$, and when $\xi$ is a constant we sometimes say that $\xi_{n}$ concentrates to $\xi$. \revise{This convergence in probability means that for every $\epsilon > 0$, the real sequence $\P{\abs{\xi_{n} - \xi} \le \epsilon}$ converges to 1 as $n\to\infty$.}
A property $\mathcal{P}_n$ is said to be with high probability (w.h.p.) if $\P{\mathcal{P}_n \text{ is true}}\to 1$ as $n\to\infty$. Logarithm to the base $e$ (natural log) is denoted by $\log{}$. Since the constant $1-\r$ will appear many times in this paper, we denote
\begin{equation} \label{def:gamma}
\rr := 1 - \r.
\end{equation}

\section{Summary of Results} \label{sec:results} 

We assume throughout that $\r < 1$, and so our results do not apply to the \revise[2]{Erd\H{o}s-R\'enyi-Gilbert}
 random graph. Our first main result is an asymptotic lower bound on $\alpha(\G)$ in terms \revise[2]{of $n$}.

\begin{theorem} \label{lower}
For every $\epsilon > 0$, we have w.h.p. that
\[
    \alpha(\G) \ge \frac{\rr }{2+\epsilon} \, \frac{n}{\log{n}}.
\]
\end{theorem}

A key ingredient of this proof in \Cref{sec:lower} is establishing a concentration result about the average vertex degree $d(\G)$. 
%This in turn leads us towards our theorem using a result about general graphs that their stability number can be lower bounded in terms of their average degree. %Our concentration result can be stated explicitly as follows, an equivalent assertion being that $d(\G)/\log{n} \overset{p}{\longrightarrow} 2\rr $.

\begin{theorem}	\label{avgdeg}
$d(\G)/\log{n}$ concentrates to $\frac{2}{\rr}$.
%For every $\epsilon > 0$, we have w.h.p. that
%\[
%\abs{\frac{d(\G)}{\rr  \, \log{n}} - 2 } \, < \, \epsilon.
%\]
\end{theorem}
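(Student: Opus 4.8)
The plan is to write the average degree as a sum of independent per-vertex contributions, compute its expectation precisely enough, control its variance with crude bounds, and conclude by Chebyshev. Concretely, $d(\G)=\frac{1}{n}\sum_{i=1}^{n}\deg{v_i}=\frac{2}{n}\sum_{i=2}^{n}D_i$, where $D_i:=\sum_{j=1}^{i-1}X^{i}_{j}$ is the number of edges joining $v_i$ to the earlier vertices $v_1,\dots,v_{i-1}$ (so that $\sum_{i=2}^{n}D_i$ counts each edge exactly once). The structural point is that $D_2,\dots,D_n$ are \emph{mutually independent}: $D_i$ is a function only of the edge variables $\{X^{i}_{j}:j<i\}$, and by \thref{def:markov-graph} the Markov coupling acts only among edges sharing the second index $i$. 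Hence $\Var{d(\G)}=\frac{4}{n^{2}}\sum_{i=2}^{n}\Var{D_i}$, and the theorem follows from Chebyshev's inequality once we show $\E{d(\G)}/\log n\to 2/\rr$ and $\Var{d(\G)}=o(\log^{2}n)$.

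For the expectation, first determine the marginal edge probabilities $p_j:=\P{X^{i}_{j}=1}$, which depend only on $j$ by \thref{def:markov-graph}. Conditioning on $X^{i}_{j}$ and substituting the two conditional probabilities from \thref{def:markov-graph} gives $p_1=p$ and $p_{j+1}=p_j(1-p_j)+\r\,p_j^{2}=p_j-\rr\,p_j^{2}$. Since $\rr\in(0,1)$ and $p_1=p\le 1<1/\rr$, a short induction shows $(p_j)_{j\ge1}$ is positive and strictly decreasing, hence convergent, and passing to the limit in the recurrence forces the limit to be $0$; then $u_j:=1/p_j$ satisfies $u_{j+1}-u_j=\rr/(1-\rr p_j)\to\rr$, so the Stolz--Ces\`{a}ro theorem yields $u_j/j\to\rr$, i.e.\ $p_j\sim\frac{1}{\rr j}$. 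Summing this harmonic-type estimate gives $\sum_{j=1}^{m}p_j=(1+o(1))\,\rr^{-1}\log m$ and $\sum_{j=1}^{m}jp_j=O(m)$, so
\[
\E{d(\G)}=\frac{2}{n}\sum_{i=2}^{n}\E{D_i}=\frac{2}{n}\sum_{j=1}^{n-1}(n-j)\,p_j=\frac{2}{n}\Bigl(n\sum_{j=1}^{n-1}p_j-\sum_{j=1}^{n-1}jp_j\Bigr)=(1+o(1))\,\frac{2\log n}{\rr},
\]
and in particular $\E{d(\G)}/\log n\to 2/\rr$.

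For the variance, crude bounds suffice. Since $0\le D_i\le i-1\le n$ we have $D_i^{2}\le nD_i$ pointwise, and $\E{D_i}=\sum_{j=1}^{i-1}p_j\le\sum_{j=1}^{n-1}p_j=O(\log n)$ uniformly in $i$; hence $\Var{D_i}\le\E{D_i^{2}}\le n\,\E{D_i}=O(n\log n)$, and therefore $\Var{d(\G)}=\frac{4}{n^{2}}\sum_{i=2}^{n}\Var{D_i}=O(\log n)=o(\log^{2}n)$. Together with the expectation computation, Chebyshev's inequality then gives $d(\G)/\log n\overset{p}{\longrightarrow}2/\rr$.

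The only real work, I expect, is the asymptotic analysis of the recurrence $p_{j+1}=p_j-\rr p_j^{2}$: extracting the sharp constant in $p_j\sim\frac{1}{\rr j}$, and, more delicately, transporting it through the partial sums $\sum_{j}p_j$ and $\sum_j(n-j)p_j$ with a genuine factor $1+o(1)$ rather than mere $\Theta(\cdot)$ estimates, so that it is the limiting constant $2/\rr$ (and not just the order $\log n$) that comes out. This is presumably the role of the paper's ``bounds on the terms of a harmonic series''; the independence of the $D_i$, the expectation bookkeeping, and the variance bound are all routine once that asymptotic is established.
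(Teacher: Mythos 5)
Your proof is correct and takes a genuinely different --- and arguably cleaner --- route than the paper's. The paper also begins from $d(\G)=\tfrac{2}{n}\sum_{i=1}^{n-1}S_i^{(i+1)}$ (your $D_{i+1}$), but rather than exploiting the mutual independence of these per-vertex sums, it bounds $d(\G)$ via stochastic dominance by multiples of a single reference partial sum $S_n$ (resp.\ $S_{\lfloor cn\rfloor}$ for a suitable $c$), then invokes a separately proved concentration of $S_n/\log n$ (\mythref{bernoulli-main2}), which in turn requires a delicate second-moment estimate (\mythref{square_expected_upper_bound}) to control the dependent covariances inside a single Markov chain. Your observation that $D_2,\dots,D_n$ are mutually independent --- the Markov coupling ties together only edges sharing their second endpoint --- lets you add variances across vertices directly, so the crude pointwise inequality $D_i^2\le nD_i$ already yields $\Var{d(\G)}=O(\log n)=o(\log^2 n)$ with no covariance bookkeeping at all. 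Both proofs rest on the same marginal recurrence $p_{j+1}=p_j-\rr p_j^2$; your Stolz--Ces\`aro derivation of $p_j\sim 1/(\rr j)$ is a soft replacement for the paper's explicit two-sided bounds in \mythref{xi-bounds} and \mythref{boundsonsum}, and transferring it to $\sum_{j\le m}p_j=(1+o(1))\,\rr^{-1}\log m$ (and to the $O(n)$ bound on $\sum_j jp_j$) needs only the standard step of splitting off a finite prefix where $\rr j p_j$ is not yet close to $1$. In short, your direct Chebyshev-on-$d(\G)$ argument sidesteps both the stochastic-dominance manipulation and the two-stage transfer through the auxiliary sequence $S_n$, at the cost of being less explicit about the rate of concentration than the paper's machinery would permit.
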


The proof for this requires analysing the sequence of dependent Bernoulli r.v.'s corresponding to each vertex $v_{i}$. This analysis is carried out in \Cref{sec:bernoulli}, but we use \Cref{bernoulli-main} and \Cref{bernoulli-main2} corresponding to it when proving our above concentration result.

Another lower bound can be obtained from the maximal independent set produced by a greedy algorithm. Recall that a greedy algorithm sorts the vertices in some order and then iteratively adds them to the output set if the addition retains the independent set property for the subset. In \Cref{sec:greedy}, \revise{we analyze the greedy algorithm in two ways. First we show that running it on the permutation of vertices from $v_{1}$ to $v_{n}$ yields a lower bound that is weaker than the $\Omega(n/\log(n))$ bound in \Cref{lower}.}

\begin{proposition} \label{greedy}
The greedy algorithm \revise{when run on the sequence $\{v_{1},v_{2},\dots,v_{n}\}$} outputs w.h.p. an independent set of size $\Omega\left(n^{\frac{\rr}{\rr+1}}\right)$.
\end{proposition}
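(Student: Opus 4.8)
The plan is to analyze the greedy algorithm that processes vertices in the natural order $v_1, v_2, \dots, v_n$ and to track which vertices get selected. The crucial structural observation is this: when the algorithm reaches $v_t$, it adds $v_t$ to the output set $S$ if and only if $v_t$ has no edge to any vertex already in $S$. So the selection of $v_t$ depends only on the Bernoulli variables $X^t_i$ for the (few) indices $i$ with $v_i \in S$. I would first argue that, restricted to the previously selected vertices, the relevant edge-indicator variables behave favorably: conditioning on membership in $S$, the indices of selected vertices are spread out, and since the decay only makes edges \emph{less} likely, the probability that $v_t$ is adjacent to a \emph{particular} earlier selected vertex is at most $p$ (indeed the marginal edge probabilities $\P{(v_j,v_i)\text{ exists}}$ form a nonincreasing sequence in $j$, all bounded by $p$). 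Hence if $|S| = k$ just before we examine $v_t$, the probability that $v_t$ is rejected is at most $1 - (1-p)^{\text{something}}$; more carefully, I want an upper bound on the rejection probability of the form $\le c\cdot k/\text{(gap)}$ or, cleanest, to couple with an \emph{independent} process where each earlier selected vertex independently blocks $v_t$ with probability $\le p$.

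Concretely, I would set up a stochastic lower bound: let $S_t$ denote the greedy set after processing $v_1,\dots,v_t$ and let $Z_t = |S_t|$. I claim $\P{v_{t+1}\text{ selected}\mid S_t} \ge \prod_{v_i\in S_t}(1 - q_i)$ where $q_i \le p$ is the conditional edge probability; since there are $Z_t$ factors each at least $1-p$, this is $\ge (1-p)^{Z_t}$. That bound alone only gives a $\Theta(\log n)$-size set, which is too weak — the point of the decay must be used. The improvement comes from the fact that once $v_t$ is adjacent to one selected vertex $v_i$, the decay makes it \emph{less} likely to be adjacent to the next selected vertex $v_{i'}$; and more importantly, consecutive selected vertices are never adjacent to each other, which via the Markov recurrence keeps the edge probabilities toward $v_t$ from all decaying simultaneously. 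I would make this precise by showing that the joint law of $(X^t_i)_{v_i\in S_{t-1}}$ is stochastically dominated by an i.i.d. $\mathrm{Ber}(p)$ sequence is \emph{not} quite enough; instead I would use the sharper structural fact that $\E{\deg{v_t}}$ is $\Theta(\log n)$ (from \mythref{avgdeg}, since the average degree is $\Theta(\log n)$ and by symmetry/exchangeability considerations most vertices have degree $O(\log n)$), so a typical $v_t$ has only $O(\log n)$ neighbors total among \emph{all} of $v_1,\dots,v_{t-1}$, hence at most $O(\log n)$ among $S_{t-1}$.

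Given that each $v_t$ has $O(\log n)$ neighbors w.h.p. (uniformly, after a union bound using concentration of individual degrees, which follows from the Bernoulli-sequence analysis of \textsection\ref{sec:bernoulli}), the greedy set size satisfies a recursion: expose vertices one at a time; $v_t$ is rejected only if one of its $\le C\log n$ neighbors lies in $S_{t-1}$, and since $S_{t-1}$ is a ``random-looking'' subset of $\{v_1,\dots,v_{t-1}\}$ of size $Z_{t-1}$, the rejection probability is at most roughly $C (\log n) Z_{t-1}/t$. This yields $\E{Z_{t+1} - Z_t \mid Z_t} \ge 1 - C(\log n)Z_t/t$, i.e. $Z_t$ grows until $Z_t \approx t/(C\log n)$, giving $|S| = Z_n = \Omega(n/\log n)$ — which is actually \emph{stronger} than the claimed $\Omega(n^{\rr/(\rr+1)})$. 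Since the target exponent $\rr/(\rr+1) = (1-r)/(2-r) < 1$ is weaker, I expect the intended argument is coarser: bound the degree of $v_t$ by its conditional expectation, which grows like $\frac{\rr}{?}$ times something, and track $Z_t$ through a differential-equation-style recursion $dZ/dt \approx 1 - (\text{deg density})\cdot Z$. The main obstacle is handling the \emph{dependence} between the events ``$v_t$ selected'' across different $t$ and between the edge indicators $X^t_i$ for $i$ ranging over the selected set — one cannot simply treat $S_{t-1}$ as a fixed set, since it is itself random and correlated with the graph. I would resolve this by a careful filtration argument: reveal the graph column by column (all edges into $v_t$ at step $t$), so that at step $t$ the set $S_{t-1}$ is $\mathcal{F}_{t-1}$-measurable and the fresh randomness $(X^t_i)_i$ is independent of $\mathcal{F}_{t-1}$; then the only remaining difficulty is the \emph{within-column} dependence among $(X^t_i)_i$, which is exactly the dependent Bernoulli sequence controlled by \mythref{bernoulli-main} and \mythref{bernoulli-main2}, allowing a Chernoff-type tail bound on the number of neighbors of $v_t$ within $S_{t-1}$ and hence on the rejection probability.
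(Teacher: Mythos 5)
There is a genuine gap, and you actually flagged the symptom yourself: your heuristic produces $\Omega(n/\log n)$, which is strictly stronger than the claimed $\Omega(n^{\rr/(1+\rr)})$, and the paper explicitly notes that the greedy bound is ``much weaker than $\Omega(n/\log n)$'' from \mythref{lower}. That tension is a signal that the heuristic is wrong, not that the proposition is coarse.

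The incorrect step is treating $S_{t-1}$ as a ``random-looking'' subset of $\{v_1,\dots,v_{t-1}\}$ and estimating the rejection probability as $\approx (\log n)\,Z_{t-1}/t$. In $\G$ both the greedy set $S_{t-1}$ and the neighborhood of $v_t$ are heavily biased toward low-index vertices: $v_1$ is always selected, the early indices have high selection probability, and simultaneously the edge probabilities $\pp^t_j$ are largest for small $j$ (decaying like $1/(\rr j)$ by \mythref{recur}). So the overlap between $S_{t-1}$ and the neighborhood of $v_t$ is far larger than a random-subset model predicts, and the true disconnection probability when $|S_{t-1}|=m$ is as small as roughly $\prod_{j<m}(1-\pp_j)\approx m^{-1/\rr}$ (this is what \mythref{AddingNewVertex} gives). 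That makes the expected waiting time to grow the greedy set by one roughly $m^{1/\rr}$, not $O(\log n)$, which is precisely where the exponent $\rr/(1+\rr)$ comes from and why your differential-equation recursion overshoots. The degree fact from \mythref{avgdeg} that you invoke is a statement about the \emph{average} degree; it does not control the intersection of neighborhoods with the highly non-uniform, graph-correlated set $S_{t-1}$.

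The paper's actual argument sidesteps the distribution of $S_{t-1}$ entirely. It tracks the geometric waiting times $R_{m+1}$ between successive additions and uses the lower bound in \mythref{AddingNewVertex}, which is \emph{uniform over all $m$-subsets} $A$ (it depends only on $|A|$, not on which vertices are in $A$): $\P{v_i \text{ disconnected from } A}\ge (1-p)\prod_{j=1}^{m-1}(1-\pp_j)$. Feeding in $\pp_j\le \tau/j$ with $\tau=\lceil 1/\rr\rceil$ yields $\E{R_{m+1}}\lesssim \binom{m}{\tau}\lesssim m^\tau$, hence $\E{\sum_{m\le k}R_m}=O(k^{\tau+1})$, a matching variance estimate, and Chebyshev gives $k=\Theta(n^{1/(\tau+1)})$ w.h.p. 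Your filtration idea (reveal column by column so $S_{t-1}$ is $\mathcal{F}_{t-1}$-measurable) is sound bookkeeping and compatible with this, but without the worst-case, subset-independent bound you have no way to control the conditional rejection probability given $S_{t-1}$, and the random-subset substitute is false.
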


\changes{The fixed sequence $v_{1}$ to $v_{n}$ may not be the best strategy for the greedy algorithm. In fact, to maximise the size of the maximal independent set output by the greedy algorithm, one would think of choosing the smallest degree vertex at each iteration. For this strategy, we deduce in \Cref{greedyratio} that the output of greedy is $O(\log{n})$-factor away from the stability number.}

% Unlike random graphs $G_{n,p}$ on which we can derive explicit formula for expected number of independent sets of order $k$ \cite{Frieze1990,Bollobas1976,matula1976largest}, the best we have for $\G$ is an estimated interval, and thus one may not expect to have a narrow range of stability number as in $G_{n,p}$.
 
For upper bound on the size of independent sets in $\G$, we prove in \Cref{sec:upper} a nontrivial  constant $c<1$ that bounds $\alpha(\G)\le c\,n$.
 
\begin{theorem}	\label{upper}
%Let $\r\in(0,1)$. For every $\epsilon >0$, we have w.h.p. that $\alpha(\G) \le  (c^\ast +\epsilon)\, n$, where $c^\ast$ is the unique positive root in $[0,1]$ of the function $\revise{\varphi_{\r}}(c):=c\, (1-\log c) + \frac{\r}{\rr }(c+\log(1-c))$. In particular, 
We have w.h.p. that $\alpha(\G) \le \left(e^{-\r} + \frac{\r}{10}\right)\, n$.
\end{theorem}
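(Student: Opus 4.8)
The plan is to use a first-moment (union bound) argument over all vertex subsets of a candidate size $k = cn$, showing that the expected number of independent sets of that size tends to $0$. For a fixed subset $S$ of size $k$, the probability that $S$ is independent is the probability that none of the $\binom{k}{2}$ edges among its vertices is present. The subtlety compared to $G_{n,p}$ is that, because of the Markov dependency in \thref{def:markov-graph}, the events ``edge $(v_j,v_i)$ absent'' for fixed $i$ and varying $j\in S$ are \emph{not} independent; however, for a fixed later endpoint $v_i$, the edges to earlier vertices form exactly the dependent Bernoulli chain studied in \textsection\ref{sec:bernoulli}, and edges with distinct larger endpoints $v_i$ are mutually independent. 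So I would first compute (or bound from above) the probability $q_m$ that, along the Markov chain of potential edges from $v_i$ back to the $m$ earlier vertices of $S$ that precede $v_i$, \emph{all} of these $m$ edges are absent. Since ``all absent'' means the success probability never decays, this probability is a clean product: conditioning successively, each edge is absent with probability $1-p$ (the probability stays at its initial level $p$ as long as no edge appears), giving $q_m = (1-p)^{m}$ when the predecessors form a contiguous block, and in general $q_m \le (1-p)^{m}$ because missing intermediate vertices only let the chain pass through states with probability $\le p$. Hence $\P{S \text{ independent}} \le \prod_{\ell=1}^{k-1}(1-p)^{\ell} = (1-p)^{\binom{k}{2}}$, i.e.\ the same bound as in the independent model $G_{n,p}$ --- the key observation being that decay can only \emph{help} edges be present, so ``all absent'' is at least as unlikely as in $G_{n,p}$. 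Wait: that would give the $G_{n,p}$ bound $\alpha = O(\log n)$, contradicting \thref{lower}. So this is wrong, and the real point is the opposite: I must bound $\P{S\text{ independent}}$ \emph{below} correctly is not what we want; for an \emph{upper} bound on $\alpha$ we need $\P{S\text{ independent}}$ to be \emph{small}, and the Markov structure makes it \emph{larger}, not smaller, so the naive product bound fails. The correct route is therefore to first upper-bound $q_m$ honestly: along the chain, the probability of $m$ consecutive absences is $(1-p)(1-p)\cdots$? No --- once we pass a vertex not in $S$ its edge may be present, dropping the level. I will instead bound $q_m$ for the worst case where \emph{all} of $v_1,\dots,v_{i-1}$ lie in $S$ (contiguous block), which is genuinely the hardest case and gives $q_m=(1-p)^m$; this is still the $G_{n,p}$ value, so a pure first-moment bound cannot beat $c<1$ unless we are cruder.

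The resolution, and the actual approach I would take, is that the stated bound $c = e^{-\r}+\r/10$ is a \emph{weak} constant-factor bound, and it should follow from a much coarser estimate: restrict attention to edges incident to $v_1$. By item~1 of \thref{def:markov-graph}, each vertex $v_i$, $i\ge 2$, is adjacent to $v_1$ independently with probability $p$. If $\r$ (hence $p$, via whatever normalization --- here $p=\r$ is presumably intended, or we just read the bound with $p=\r$) is such that we think of the relevant probability as $\r$, then the set $N := \{v_i : (v_1,v_i)\notin E\}$ of non-neighbours of $v_1$ has $|N|$ distributed as $1 + \mathrm{Bin}(n-1, 1-\r)$, which by Chernoff concentrates at $(1-\r)n$ w.h.p.\ --- but $e^{-\r}+\r/10 > 1-\r$ for small $\r$, so even ``$\alpha \le |N| \cup$ something'' needs care. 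Actually any independent set $S$ either contains $v_1$, in which case $S\setminus\{v_1\}\subseteq N$, or it does not, in which case $S$ avoids $v_1$ but we have learned nothing; recursing this on $v_2$ restricted to $N$, and so on, is the natural idea but gets complicated. So the cleanest plan: fix the small constant $c = e^{-\r}+\r/10$, set $k=cn$, and run the first-moment method but \emph{only} over the edges from each chosen vertex to the lowest-indexed chosen vertex, or better, exploit that for a fixed $v_i$ the events across different $i$ are independent and bound $\P{S \text{ ind}} \le \prod_{i}\P{v_i \text{ has no edge to the } (\text{rank of } i \text{ in } S)-1 \text{ chosen predecessors}}$. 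The hard part is getting a usable upper bound on this conditional ``no edge to any chosen predecessor'' probability: I would bound it by the probability that the \emph{first} potential edge (to the immediately preceding vertex overall) and then a geometric-type tail behaves well; concretely I expect to show $\P{v_i \text{ isolated from } j \text{ specified predecessors}} \le (e^{-\r}+o(1))$ uniformly once $j$ is at least a slowly growing function, because the chain, started at level $p=\r$, will with probability $\to 1$ produce at least one edge to \emph{some} early vertex within a bounded window, and each present edge permanently lowers the level; quantitatively, the probability of surviving $j$ steps with no edge, maximized over which predecessors are ``active,'' should be roughly $e^{-\r}$ by a Poisson-type approximation to the number of edges from $v_i$.

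Assembling: with $\P{S \text{ ind}} \le (e^{-\r}+\r/20)^{(1-o(1))n}$ for every $k$-subset $S$ with $k = (e^{-\r}+\r/10)n$ --- here using that all but $o(n)$ vertices of $S$ have many predecessors in $S$ --- the expected number of independent sets of size $k$ is at most $\binom{n}{k}(e^{-\r}+\r/20)^{(1-o(1))n}$. The binomial factor is $2^{O(n)}$, in fact $\le \big(\tfrac{1}{c}\big)^{cn}\big(\tfrac{1}{1-c}\big)^{(1-c)n}$; I would choose the slack between $e^{-\r}+\r/20$ (the per-vertex bound) and $e^{-\r}+\r/10$ (the claimed density) precisely so that this entropy factor is absorbed and the whole product is $e^{-\Omega(n)}\to 0$. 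Markov's inequality then gives $\alpha(\G) < k$ w.h.p. The main obstacle --- and where the real work lies --- is the uniform per-vertex estimate: proving that for \emph{any} set of $j$ predecessors (not just a contiguous block, and $j$ growing), the probability that $v_i$ sends no edge to any of them is at most $e^{-\r}+o(1)$. This requires a careful stochastic-domination argument on the dependent Bernoulli chain of \textsection\ref{sec:bernoulli}: one shows the worst predecessor configuration is an initial block, analyzes $\prod(1-p_\ell)$ where $p_\ell$ is the (random, decaying) level, and uses the results \thref{bernoulli-main}/\thref{bernoulli-main2} to control how fast the level decays, concluding the product is w.h.p.\ close to $e^{-\r}$ rather than to $(1-\r)$ because decay kicks in. The final constant $\r/10$ is chosen generously to swallow all error terms and the union-bound entropy, so no delicate optimization is needed once the per-vertex bound is in hand.
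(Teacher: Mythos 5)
Your set-up matches the paper's: a first-moment / union-bound argument over $k$-subsets with $k=cn$, using the vertex-by-vertex factorisation of the independence probability (this is exactly \mythref{probproduct}), and you correctly explain why the naive $(1-p)^m$ estimate fails. But the bound you propose to substitute --- that $\P{v_i\text{ is disconnected from }j\text{ specified predecessors}}\le e^{-\r}+o(1)$ uniformly once $j$ exceeds some slowly growing function --- is false. Take the predecessors to be the $j$ vertices immediately before $v_i$; then the disconnection probability equals $\prod_{\ell=i-j}^{i-1}(1-\pp^i_\ell)$, and since $\pp^i_\ell=\Theta(1/(\rr\ell))$ by \mythref{recur}, this is $\bigl(\tfrac{i-j}{i}\bigr)^{1/\rr}(1+o(1))$, which tends to $1$ whenever $j=o(i)$. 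No slowly growing threshold on $j$ can give a constant bound like $e^{-\r}$: the probability is a nontrivial constant only when $j=\Theta(i)$, and even then it depends on $j/i$ and on where the predecessors sit. The ``Poisson-type'' heuristic cannot produce $e^{-\r}$ either, because the expected number of edges from $v_i$ into a set of linear size grows like $\log n$ (this is \mythref{avgdeg}), not to a bounded constant. You also substitute $p=\r$ in several places; these are distinct parameters, and the fact that the theorem's bound is independent of $p$ is something your heuristic cannot explain.

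The constant $e^{-\r}+\r/10$ is not a per-vertex disconnection probability. In the paper it is an explicit upper bound on the unique positive root $c^{*}$ of
\[
f_{\r}(c)\;=\;c(1-\log c)\;+\;\frac{\r}{\rr}\bigl(c+\log(1-c)\bigr),
\]
a function that emerges only after several steps you do not carry out. Specifically: the conditional-probability bounds of \mythref{ineqOfCondProb} and \mythref{AddingNewVertex} control the Markov chain from an \emph{arbitrary} (not necessarily contiguous) predecessor set; these give the uniform per-subset bound of \mythref{Prob-Ak-is-Indep}, $P_k\le\prod_{i=1}^{k-1}\bigl(1-\tfrac{\r}{\rr}(\sqrt{n-i}+1)^{-2}\bigr)^i$; one then applies $1+x\le e^{x}$ and estimates $\sum_i i(\sqrt{n-i}+1)^{-2}$ by an integral to get $-(\log(1-c)+c)n+O(n)$; finally the entropy of $\binom{n}{cn}\le(e/c)^{cn}$ contributes $c(1-\log c)n$, giving $\E{H_{k,n}}\le\exp\{nf_{\r}(c)+O(n)\}$. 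The calculus fact $c^{*}<e^{-\r}+\r/10$ is then \mythref{rootf}. So while the first-moment scaffolding is right, the essential technical content --- the decaying bound $\pp^i_j=O(1/(\rr j))$, the Markov-chain conditional bounds, and the integral/entropy computation that yields $f_{\r}$ and its root --- is exactly what your proposal defers, and the replacement estimate you suggest in its place is not correct.
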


A crucial first step in our entire analysis is to establish the edge probabilities, i.e., success probability for Bernoulli r.v. corresponding to each edge. This is not a straightforward task, unlike the \revise[2]{Erd\H{o}s-R\'enyi-Gilbert}
 graph $G_{n,p}$ for which this probability is readily available as the input parameter $p$. Although we don't derive an exact expression for the edge probabilities, we derive tight lower and upper bounds on it in \Cref{sec:edge-prob}. This is done by analyzing in \Cref{sec:recur} the rate at which terms in the recurrence formula $f_{a}: x\in\real \ \mapsto \ x(1-ax)$ grow. This recurrence appears due to the Markov process that generates our random graph.

\subsection{Some Consequences}	\label{sec:conseq}

For any $n$-vertex graph $G$ with $m$ edges, the average degree $d(G)$ is equal to $2m/n$ and we have $\delta(G) \le d(G) \le \Delta(G)$. Hence, \Cref{avgdeg} implies that $\G$ has w.h.p. approximately $\Theta(n\log{n})$ many edges, and there exist vertices with degrees $O(\log{n})$ and those with degrees $\Omega(\log{n})$, where the constants hiding in this notation are linear in $\r$.

\begin{corollary}	\label{conseqdeg}
For every $\epsilon > 0$, we have w.h.p. that $\delta(\G) \le \left(\frac{2}{\rr} +\epsilon \right)\log{n}$ and $\Delta(\G) \ge \left(\frac{2}{\rr} -\epsilon \right)\log{n}$.
\end{corollary}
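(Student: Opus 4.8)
The plan is to deduce \mythref{conseqdeg} immediately from \mythref{avgdeg}, using only the elementary sandwich $\delta(G) \le d(G) \le \Delta(G)$ valid for every finite graph $G$. This holds because $d(G) = \frac1n\sum_{i}\deg{v_{i}}$ is a convex combination of the vertex degrees and therefore lies between the smallest and the largest of them.

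So, fix $\epsilon > 0$. By \mythref{avgdeg} we have $d(\G)/\log n \overset{p}{\longrightarrow} 2/\rr$, i.e. $\P{\abs{d(\G)/\log n - 2/\rr} > \epsilon} \to 0$ as $n\to\infty$. Consequently the event $\{d(\G) \le (2/\rr + \epsilon)\log n\}$ holds w.h.p., and combined with $\delta(\G) \le d(\G)$ this gives $\delta(\G) \le (2/\rr+\epsilon)\log n$ w.h.p.; symmetrically, $\{d(\G) \ge (2/\rr - \epsilon)\log n\}$ holds w.h.p., and together with $\Delta(\G) \ge d(\G)$ this yields $\Delta(\G) \ge (2/\rr - \epsilon)\log n$ w.h.p. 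Intersecting the two w.h.p. events (a union bound on their complements) gives both inequalities simultaneously, which is exactly the claim.

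I do not expect any genuine obstacle here: the entire content lives in \mythref{avgdeg}, whose proof rests on the dependent-Bernoulli concentration results \mythref{bernoulli-main} and \mythref{bernoulli-main2}. The only thing to be mildly careful about is the bookkeeping — treating the $\delta(\G)$ bound and the $\Delta(\G)$ bound as two separate w.h.p.\ statements, one using the lower tail and one the upper tail of the concentration, rather than trying to extract both from a single two-sided window.
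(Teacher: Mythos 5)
Your proof is correct and follows exactly the route the paper intends: the paper itself states the sandwich $\delta(G)\le d(G)\le \Delta(G)$ just before \mythref{conseqdeg} and leaves the corollary as an immediate consequence of the concentration in \mythref{avgdeg}, which is precisely what you spell out.
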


The chromatic number $\chi(G)$ of any $n$-vertex graph $G$ is lower-bounded by $\alpha(G)$ through the relation $\chi(G) \ge n/\alpha(G)$, which means that an upper bound on $\alpha(G)$ provides a lower bound on $\chi(G)$. Since our upper bound on $\alpha(\G)$ is $O(n)$, this basic relation does not tell us anything useful. Instead, we use the Tur\'an bound on chromatic number to deduce a lower bound on $\chi(\G)$ in terms of the prime counting function $\pi(n)$. We also note a lower bound on the edge-chromatic number $\chi^{\prime}$.

\begin{corollary}
For every $\epsilon > 0$, we have w.h.p. that
\[
\chi(\G) \ge \frac{\pi(n)}{\pi(n) + \epsilon - \frac{2}{\rr}}, \qquad \text{ and } \quad \chi^{\prime}(\G) \ge \left(\frac{2}{\rr} -\epsilon \right)\log{n} .
\]
\end{corollary}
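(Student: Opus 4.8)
The plan is to derive the bound on $\chi(\G)$ from the Tur\'an bound on the chromatic number, which states that an $n$-vertex graph $G$ with $m$ edges satisfies $\chi(G)\ge n^{2}/(n^{2}-2m)$. (Quick justification: in a proper colouring the colour classes are independent sets, so by convexity of $x\mapsto\binom{x}{2}$ the number of non-edges is at least $\chi(G)\binom{n/\chi(G)}{2}$, and rearranging this inequality yields the claim.) Since $2m=n\,d(\G)$, this reads
\[
\chi(\G)\ \ge\ \frac{n^{2}}{\,n^{2}-n\,d(\G)\,}\ =\ \frac{n}{\,n-d(\G)\,},
\]
which is well defined w.h.p.\ because \mythref{avgdeg} gives $d(\G)=\Theta(\log n)$, hence $d(\G)<n$ for all large $n$.

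Next I would substitute the degree concentration. Fix $\epsilon>0$; we may assume $\epsilon<2/\rr$, since otherwise the asserted right-hand side is at most $1\le\chi(\G)$ and there is nothing to prove. By \mythref{avgdeg}, w.h.p.\ $d(\G)\ge(\frac{2}{\rr}-\epsilon)\log n$, and since $x\mapsto n/(n-x)$ is increasing on $[0,n)$ this gives, w.h.p.,
\[
\chi(\G)\ \ge\ \frac{n}{\,n-(\frac{2}{\rr}-\epsilon)\log n\,}\ =\ \frac{\,n/\log n\,}{\,n/\log n-(\frac{2}{\rr}-\epsilon)\,}.
\]
It remains to pass from $n/\log n$ to $\pi(n)$. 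Writing $c:=\frac{2}{\rr}-\epsilon>0$, the map $x\mapsto x/(x-c)$ is decreasing on $(c,\infty)$, so using the elementary estimate $\pi(n)\ge n/\log n$ (valid for all $n\ge 11$) we get, still w.h.p.,
\[
\chi(\G)\ \ge\ \frac{\,n/\log n\,}{\,n/\log n-c\,}\ \ge\ \frac{\pi(n)}{\,\pi(n)-c\,}\ =\ \frac{\pi(n)}{\,\pi(n)+\epsilon-\frac{2}{\rr}\,},
\]
which is the claimed inequality.

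The edge-chromatic bound is essentially immediate: the $\Delta(G)$ edges incident to a vertex of maximum degree pairwise share an endpoint and hence need pairwise distinct colours, so $\chi^{\prime}(G)\ge\Delta(G)$ for every graph $G$; combining this with \mythref{conseqdeg}, which gives $\Delta(\G)\ge(\frac{2}{\rr}-\epsilon)\log n$ w.h.p., proves the second inequality.

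I do not anticipate a real obstacle here: all of the probabilistic content is already packaged in \mythref{avgdeg} and \mythref{conseqdeg}, and the chromatic estimate itself is a deterministic consequence of the edge count and degree bounds. The only steps needing a little care are the $\epsilon$-bookkeeping in the second paragraph and, if one wants a fully self-contained statement, replacing the asymptotic $\pi(n)\sim n/\log n$ by an effective Rosser--Schoenfeld-type inequality such as $\pi(n)>n/\log n$ for $n\ge 11$.
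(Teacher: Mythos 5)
Your proposal is correct and follows essentially the same route as the paper's own proof: both use the Tur\'an-type bound $\chi(G)\ge n^{2}/(n^{2}-2m)=n/(n-d(G))$, substitute the concentration $d(\G)\ge(\tfrac{2}{\rr}-\epsilon)\log n$ from \mythref{avgdeg}, and pass to $\pi(n)$ via $\pi(n)\ge n/\log n$, while the edge-chromatic bound comes from $\chi'(G)\ge\Delta(G)$ together with \mythref{conseqdeg}. Your extra bookkeeping (reducing to $\epsilon<2/\rr$ and noting the monotonicity of $x\mapsto x/(x-c)$) is a clean way to justify the last step, but the substance is identical.
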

\begin{proof}
The lower bound on $\chi^{\prime}$ is immediate from \Cref{conseqdeg} and the fact that $\chi^{\prime}(G)\ge\Delta(G)$ for any graph $G$. The bound on $\chi$ is using the bound from Turan-type arguments \cite[Theorem 8]{coffman2003bounds} which says that $\chi(G) \ge n^{2}/(n^{2} - 2m)$ for any graph $G$ with $n$ vertices and $m$ edges. Since $d(G) = 2m/n$ makes this bound equal to $n/(n-d(G))$, we obtain that 
\[
\chi(\G) \ge \frac{n}{n-d(\G)} \eq \frac{1}{1 - \frac{d(\G)/\log{n}}{n/\log{n}}} \ge \frac{1}{1 - \frac{2/\rr - \epsilon}{\pi(n)}} \quad \text{w.h.p.} \ \forall\,\epsilon > 0,
\]
where in the last inequality we have used the concentration from \Cref{avgdeg} and the fact that $\pi(n) \ge n/\log{n}$ for $n \ge 17$.
\end{proof}

Although this lower bound on $\chi(\G)$ is not a constant, it is clear that it is a weak bound and so there remains scope for improvement on it.

Another question related to chromatic number that has raised interest for the \revise[2]{Erd\H{o}s-R\'enyi-Gilbert}
 random graph $G_{n,p}$ is that of its Hadwiger number $h(G_{n,p})$, which for a graph $G$ is the largest value of $t$ such that $G$ contains a $K_{t}$-minor. This is related to the chromatic number $\chi(G)$ because of the famous Hadwiger's conjecture that $\chi(G) \le h(G)$ for any graph $G$. \citet{erde2021large} showed recently that $h(G_{n,p}) = \Omega(\sqrt{k})$ when $p=(1+\epsilon)/k$, where $k$ is a lower bound on $\delta(G_{n,p})$. Although an analysis of the Hadwiger number of $\G$ requires separate attention that is beyond the scope of this paper, we note some observations here. Recall that the line graph $L(G)$ of a simple graph $G$ is obtained by vertex-edge duality.

\begin{corollary}
For every $\epsilon > 0$, w.h.p. there exists a $K_{t}$-minor in $\G$ for $t=\Omega_{\epsilon}(\frac{\log{n}}{\sqrt{\log{\log{n}}}})$ and a $K_{s}$-minor in $L(\G)$ for $s = \Omega_{\epsilon}(\log{n})$, %$s = \left(\frac{2}{\rr}-\epsilon\right)\log{n}$.
where $\Omega_{\epsilon}$ hides a constant in terms of $\epsilon$.
\end{corollary}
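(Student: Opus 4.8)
The plan is to reduce both assertions to the degree estimates already established, using two standard minor-forcing facts. For the clique minor in $\G$ itself, the key external input is the Kostochka--Thomason theorem: there is an absolute constant $c > 0$ such that every graph with average degree $d$ contains a $K_t$-minor with $t \ge c\, d/\sqrt{\log d}$ once $d$ is large. By \mythref{avgdeg} we have w.h.p.\ that $d(\G) \ge \big(\frac{2}{\rr} - \epsilon\big)\log n$; feeding $d = \Theta(\log n)$ into this bound and using $\log d = (1+o(1))\log\log n$ gives a $K_t$-minor with $t = \Omega_\epsilon\big(\log n/\sqrt{\log\log n}\big)$ w.h.p.

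For the line graph I would use the elementary observation that the edges incident to a single vertex $v$ of a graph $G$ pairwise intersect in $v$, and hence induce a clique of size $\deg{v}$ in $L(G)$; in particular $L(G)$ contains $K_{\Delta(G)}$ as a subgraph, and a subgraph is a minor. Since \mythref{conseqdeg} gives w.h.p.\ that $\Delta(\G) \ge \big(\frac{2}{\rr} - \epsilon\big)\log n$, we obtain a $K_s$-minor in $L(\G)$ with $s = \Omega_\epsilon(\log n)$ w.h.p. Both conclusions then hold simultaneously, since each of the two underlying events has probability tending to $1$ and so does their intersection.

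I do not anticipate a genuine obstacle here: all of the probabilistic work is already contained in \mythref{avgdeg} and \mythref{conseqdeg}, and the minor-existence statements are off-the-shelf. The one point requiring mild care is to invoke Kostochka--Thomason in the form that takes the \emph{average} degree as input rather than the minimum degree, because \mythref{conseqdeg} only controls $\delta(\G)$ from above; the robust quantity to use is $d(\G)$, which \mythref{avgdeg} pins down up to a $1+o(1)$ factor.
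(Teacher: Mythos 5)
Your proof is correct, and both of your probabilistic inputs match the paper's: the first claim is driven by the average-degree lower bound from \mythref{avgdeg}, and the second by the maximum-degree lower bound from \mythref{conseqdeg}. For the $K_t$-minor in $\G$, your invocation of Kostochka--Thomason is essentially what the paper does (it cites \citet[Corollary~2.2]{erde2021large}, which records the same average-degree-to-clique-minor bound), so that half is identical in substance. Where you genuinely diverge is the line-graph claim. The paper reasons through chromatic numbers: it uses that Hadwiger's conjecture holds for line graphs as a consequence of Vizing's theorem, obtaining $h(L(\G)) \ge \chi(L(\G)) = \chi'(\G) \ge \Delta(\G)$. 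You instead observe directly that the $\Delta(\G)$ edges incident to a maximum-degree vertex are pairwise adjacent in $L(\G)$, so $L(\G)$ already contains $K_{\Delta(\G)}$ as a subgraph and hence as a minor. Your route is more elementary — it bypasses Vizing and Hadwiger-for-line-graphs entirely and gives the same numerical bound — while the paper's route records the stronger intermediate fact $h(L(\G)) \ge \chi(L(\G))$, which is of independent interest since the corollary is framed in the context of the Hadwiger number. Both are valid; yours is cleaner for the stated conclusion, the paper's carries a bit more structural information.
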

\begin{proof}
A graph $G$ with average degree $d(G) \ge k$ has $h(G) = \Omega(k/\sqrt{\log{k}})$ \cite[cf.][Corollary~2.2]{erde2021large}. This bound is a convex function of $k$ that is increasing for $k\ge 2$. Since the average degree concentration in \Cref{avgdeg} tells us that $d(\G) \ge (\frac{2}{\rr}-\epsilon)\log{n}$ w.h.p., it follows that $\G$ has a complete minor of size at least $\Omega_{\epsilon}(\frac{\log{n}}{\sqrt{\log{\log{n}}}})$. For line graphs of simple graphs, it is well-known that Hadwiger's conjecture is true as a direct consequence of Vizing's theorem which says that $\chi^{\prime}(G) \in \{\Delta(G), \Delta(G)+1\}$ and because $\chi(L(G))=\chi^{\prime}(G)$. Hence, we have $h(L(\G)) \ge \chi(L(\G)) = \chi^{\prime}(\G) \ge \Delta(\G) \ge (\frac{2}{\rr}-\epsilon)\log{n}$, where the last inequality is from \Cref{conseqdeg}.
\end{proof}

\changes{A final straightforward implication is that $L(\G)$ has large matchings of the order $\Omega(n/\log{n})$ w.h.p., because an independent set in a graph corresponds to a matching in the line graph.}

\subsection{Discussion} \label{sec:discuss}

The stability number of \revise[2]{Erd\H{o}s}-R\'enyi-Gilbert random model $G_{n,p}$ has been well-studied. If $p\in (0,1)$ is fixed, then \citet{matula1976largest} has shown that w.h.p. the stability number $\alpha(G_{n,p}) \approx 2\log_{q}(n)$, where the logarithm base is $q = 1/(1-p)$. The graph $G_{n,p}$ with fixed $p$ is usually considered as a dense random graph. For sparse random graphs with $np= d$ fixed, \citet{frieze1990independence} showed that
\begin{equation*} %\label{FriezeResult}
	\left|\alpha(G_{n,p}) - \frac{2n}{d}\left(\log d - \log\log d- \log2+1 \right) \right| \le \frac{\epsilon n}{d}
\end{equation*}
holds w.h.p. for all $d\ge d_\epsilon$ where $d_\epsilon$ is a constant depending on the chosen $\epsilon >0$. In contrast, our lower bound of $\Omega(n/\log{n})$ from \Cref{lower} indicates that there are much larger independent sets in our random graph $\G$ when $p$ and $\r$ are fixed. Since all of our analysis heavily depends on $\r < 1$, our results don't generalise those for $G_{n,p}$, thus implying that a phase transition occurs in our random graph model at the boundary value $\r=1$.

The $\log{n}$ term in our main results comes from bounding the partial sum of a harmonic series that arises due to the Markovian dependence between the edge probabilities. The lower bound in \Cref{lower} makes use of Tur\'an's lower bound on $\alpha(G)$ for any graph $G$. 
% of Turan-type arguments that yield lower bound on the sizes of independent sets in general graphs in terms of the number of vertices and edges. 
 The crucial part of our analysis is the average degree concentration in \Cref{avgdeg}. Our proof for this theorem uses Chebyshev's inequality. Due to the absence of independence structure between the r.v.s, we cannot apply Chernoff- or Hoeffding-type inequalities, and use of martingale tail inequalities also does not help. For fixed $p$, this concentration result shows $\G$ to be more sparse than $G_{n,p}$ in terms of the number of edges. Intuitively, a denser graph has smaller stability number and our result indeed complies with this intuition,
\[
\alpha(\G) \eq \Omega\left(\frac{n}{\log n} \right) \eq \Omega(\log_{\frac{1}{1-p}}(n)) \eq \Omega(\alpha(G_{n,p})) .
\]
\Cref{avgdeg} relies on the asymptotic result of the sequence $\frac{S_n}{\log n}$ in \Cref{bernoulli-main} \revise{where $S_n$ is the partial sum of a non-i.i.d. Bernoulli sequence in which the probabilities satisfy a certain recurrence relation}. This result does not trivially follow from the literature. We know of two studies \cite{JamesLimit,LanAsymptotics} that show $S_n/n$ converges to constants under the assumption that $\P{\X_{n+1} = 1}[\X_{1},\dots,\X_{\revise{n}}] = \theta_n + g_n(S_n)$ where $g_n$ is a linear function and $\theta_n$ is a constant. Our work makes a  different assumption where the conditional probability does not depend on the partial sum $S_n$. Furthermore, the variables we consider form a nonhomogenous Markov chain and to the best of our knowledge such a sequence has not been analysed before, although we mention that partial sums of a Bernoulli sequence with a homogenous Markov chain has been studied before \cite{edwards1960meaning,Wang1981}. 
%A sequence of identically distributed Bernoulli variables with a Markovian property was first introduced by \citet{edwards1960meaning} and the limiting property of their partial sum was later investigated by \citet{Wang1981}, but these require the transition matrix to be fixed during the whole process (homogenous Markov chain), which is not the case in our Bernoulli sequence.
Another powerful method to deal with a sequence of variables is martingale analysis. It is easy to see $\{S_n\}_{n\ge1}$ is a super-martingale because $\E{\left[S_{n+1}\,|\,Y_1,\ldots,Y_n\right]} = \E{\left[Y_{n+1}\,|\,Y_n\right]} +S_n > 0$. Since $\{S_n\}_{n\ge1}$ is non-decreasing, Doob's inequality collapses to Chebyshev's inequality $\P{S_n>C}\le \E{S_n}/C$. So it appears  that a martingale technique would not improve our analysis.
%The main strategy for obtaining the asymptotic result of $S_n$ in our case is the second moment method. %(ref. section \ref{sec:bernoulli-main-proofs}).

\changes{We finish our discussion by stating some open questions that follow immediately from our main results.} The first of these is whether our lower bound of $n/\log{n}$ is tight.

\begin{question}
Does there exist a function $f(n) = \Omega(n/\log{n})$ such that $f(n) = o(n)$ and $\alpha(\G) = \Theta(f(n))$ w.h.p.?
\end{question}

\changes{Our lower bound from \Cref{lower} can also be stated in terms of the prime-counting function, as seen later in \Cref{primes}. This suggests a possible algorithm for computing independent sets in $\G$ by testing whether each number $k \le n$ is prime or composite (and it is well-known that this check can be done in time polynomial in $\log{k}$) and using the subset of vertices corresponding to primes less than $n$. We do not know what the probability is for such a subset to be independent, nor is it clear that even asymptotically this algorithm yields independent sets (note that having a lower bound from the prime-counting function is a weaker result than the algorithm yielding independent sets). Answering these questions may involve exploring the literature on distributions of primes.

\begin{question}
Does the above algorithm yield independent sets w.h.p.?
\end{question}

A third question arises from the greedy algorithm for which we do performance analysis in \Cref{greedy} and \Cref{greedyratio}. For \revise[2]{Erd\H{o}s-R\'enyi-Gilbert}
 random graphs, this heuristic generally works well empirically \cite{goldberg2005experimental} and also has been shown to be theoretically almost optimal amongst local search algorithms when the average degree is a constant \cite{coja2015independent}. Our concentration result for average degree in \Cref{avgdeg} indicates that an analogous question in our context would be the following.

\begin{question}
When the average degree of $\G$ is fixed to $c\cdot\log{n}$ for some constant $c$, is the greedy algorithm almost optimal amongst all local search algorithms? Does it have a performance ratio $\Omega(\log{n})$? 
\end{question}
}

%i.e. if we denote $p_{j,i}\eq \P{\X_{i+1}\eq j}[\X_i\eq i]$ then the transition matrix is 
%\[
%\begin{bmatrix}
%	p_{0,0} & p_{0,1}\\
%	p_{1,0} & p_{1,1}
%\end{bmatrix} \eq 
%\begin{bmatrix}
%	1-(1-\pi)p & (1-\pi)p\\
%	(1-\pi)(1-p) & (1-\pi)p+\pi
%\end{bmatrix}
%\]
%where $p\eq\P{X_1\eq1}$ and $\pi$ is the correlation coefficient of $\X_n$ and $\X_{n+1}$. However, transition probabilities in our model are changing in each stage and so their results do not benefit us much. 

\section{\revise{Preliminaries on $\G$}}	\label{sec:chain} %{Markov chain structure}

In any random graph model, each edge can be associated with a Bernoulli random variable (r.v.) which is equal to 1 if and only if that edge is present in the random graph. The success probability for this r.v. depends on the construction of the random graph model; for the \revise[2]{Erd\H{o}s-R\'enyi-Gilbert}
 random graph $G_{n,p}$ by \revise[2]{Erd\H{o}s}-R\'enyi-Gilbert, this probability is equal to the parameter $p$, which implies that any two Bernoulli r.v.'s are iid. This significantly helps the analysis of these graphs for their stability number, concentration of chromatic number and  many other graph properties. The same is not true for arbitrary edges in our random graph $\G$ since we have a dependency structure between the edges. Denote the Bernoulli r.v. $X^{i}_{j}$ by 
\[
X^{i}_{j} \eq 1 \iff \text{edge $(v_{j},v_{i})$ is present in graph $\G$}, \qquad 1 \le j < i \le n.
\]
and its success probability by 
\[
\pp^{i}_{j} \define \P{X^{i}_{j}=1}, \qquad 1 \le j < i \le n.
\]

\revise{It is clear that $\pp^{i}_{1} = p$ for every $i$. For the other edge probabilities, we don't have a closed-form expression in terms of $p$. Instead, the generation of our random graph implies a recursive equation which also leads to lower and upper bounds on the edge probabilities. Recall $\ell_{j}$ from \cref{piecewise}.}

\begin{lemma}	\label{recur}
For every $2 \le j \le i-1$, we have
\[
\pp^{i}_{j} \eq \pp^{i}_{j-1}\left[1 \,-\, \rr \pp^{i}_{j-1}\right].
\]
\end{lemma}
\begin{proof}
The recursion follows from the law of total probability,
\begin{align*}
		\P{X^{i}_{j}= 1} &\eq \P{X^{i}_{j}= 1}[X^{i}_{j-1}=1]\P{X^{i}_{j-1}=1} \: + \: \P{X^{i}_{j}= 1}[X^{i}_{j-1}=0]\P{X^{i}_{j-1}=0} ,\\
		&\eq \r\,\P{X^{i}_{j-1}=1}\P{X^{i}_{j-1}=1} \:+\: \P{X^{i}_{j-1}=1}\P{X^{i}_{j-1}=0} ,\\
		&\eq \r(\pp^{i}_{j-1})^{2} \:+\: \pp^{i}_{j-1}(1-\pp^{i}_{j-1}) ,\\
		&\eq \pp^{i}_{j-1}\left[1 \,-\, \rr \pp^{i}_{j-1}\right] ,
\end{align*}
where the second equality is from the conditional probabilities in \Cref{def:markov-graph}. 
\end{proof}

\revise{Each vertex $v_{i}$, for $i\ge 2$, is associated with a Bernoulli sequence $\{X^{i}_{1},\dots,X^{i}_{i-1} \}$ corresponding to the edges from $v_{i}$ to the previous vertices $\{v_{1},\dots,v_{i-1}\}$}. It is obvious from the construction of our random graph that $\{X^{i}_{j}: j\le i-1 \}$ forms a non-homogenous Markov chain whose transition matrix from $X^{i}_{j-1}$ to $X^{i}_{j}$ can be stated as
\begin{equation}	\label{def:Pik}
\mathbf{P}^{i}_{j-1} \eq   \begin{bmatrix}
		1-\pp^{i}_{j-1} \;&\quad \pp^{i}_{j-1} \medskip \\
		\revise{1 - \r}\pp^{i}_{j-1} \;&\quad \r\pp^{i}_{j-1}
	\end{bmatrix}, \qquad 2 \le j \le i-1.
\end{equation}
%\revise{where we recall the definition $\gamma := 1 -\r$ from \cref{def:gamma}}. 
\changes{We note that there is a \emph{single} Markov chain with a similar transition matrix as \cref{def:Pik} and which represents \emph{all} the edges in the graph, not just the ones corresponding to a single vertex. To state this result, we use the terminology that two Markov chains $\{\mathcal{X}_{n} : n\ge 1 \}$ and $\{\mathcal{Y}_{n} : n\ge 1  \}$ are \emph{equivalent} to mean that  (a) $\mathcal{X}_{1}$ and $\mathcal{Y}_{1}$ have the same distribution, and (b) for each $i\ge 1$, the transition matrix from $\mathcal{X}_{i}$ to $\mathcal{X}_{i+1}$ is equal to that from $\mathcal{Y}_{i}$ to $\mathcal{Y}_{i+1}$.

\begin{lemma}	\label{markovchain}
The Markov chains corresponding to any two vertices $v_{i}$ and $v_{j}$ in $\G$, with $2 \le i < j$, are such that $\{X^{i}_{1}, X^{i}_{2}, \ldots, X^{i}_{i-1}\}$ is equivalent to $\{X^{j}_{1}, X^{j}_{2}, \ldots, X^{j}_{i-1}\}$.

In particular, if $\{\mathcal{X}_{1}, \mathcal{X}_{2},\dots,\mathcal{X}_{n}\}$ is a Markov chain of Bernoulli variables where the success probabilities are $\P{\mathcal{X}_{i}=1} = p_{i}$ with $p_{1} = p$, and the transition matrix from $\mathcal{X}_{j-1}$ to $\mathcal{X}_{j}$ is given by \[\mathbf{P}_{j-1} = \begin{bmatrix} 
	1 - p_{j-1} \;&\quad p_{j-1} \medskip \\
	1 - \r p_{j-1} \;&\quad \r p_{j-1}
\end{bmatrix} ,\]
% satisfy
%\begin{equation}	\label{onechain}
%p_{1} = p, \quad \text{ and } \quad p_{i} = p_{i-1}\left[1 - \rr p_{i-1} \right], \;\; i \ge 2.
%\end{equation}
then for every $i\ge 2$, the subchain $\{\mathcal{X}_1,\ldots, \mathcal{X}_{i-1}\}$ is equivalent to the chain $\{X^{i}_{1}, X^{i}_{2}, \ldots, X^{i}_{i-1}\}$ for $v_{i}$.
\end{lemma}
\begin{proof}
The second assertion follows from the arguments for the first assertion after recognising that the transition matrix of the new chain has the same structure as \cref{def:Pik}. For the first claim, it suffices to show that the edges from $v_{i}$ to the subsequent vertices $\{v_{i+1},\dots,v_{n}\}$ are equal in distribution and also independent of each other. 

%By construction, each chain $\{X^{i}_{1}, X^{i}_{2}, \ldots, X^{i}_{i-1}\}$ is equivalent to $\{\mathcal{X}_1,\ldots, \mathcal{X}_{i-1}\}$. The nontrivial part to argue is that when considering the chain for vertex $v_{j}$, the subchain $\{X^{j}_{1}, X^{j}_{2}, \ldots, X^{j}_{i}  \}$ for any $i < j$ is no different than the chain for $v_{i}$, so that the chain for $v_{j}$ can be generated using the chain for $v_{i}$ and appending $j-i$ new terms.  

Let us argue that $X^{j}_{i}$ and $X^{k}_{i}$ are iid for $ 1\le i < j < k$. When generating $\G$, each iteration $i \ge 1$ uses a Markov chain to produce the edges $\{(v_{t}, v_{i}) \colon t < i \}$ to vertex $v_{i}$. The dependency between the edges is limited to each iteration, meaning that we have intra-iteration dependency but inter-iteration independency.  Since $X^{j}_{i}$ and $X^{k}_{i}$ belong to distinct iterations $j$ and $k$, it follows that they are independent of each other. It remains to show they are identical in distribution, i.e., they have the same success rate. This can be argued by induction on $i$. The base case $i=1$ is obvious because by construction, both $X^{j}_{1}$ and $X^{k}_{1}$ have a success probability of $\pp$. For $i\ge 2$, the claim follows from the induction hypothesis $\pp^{j}_{i-1} = \pp^{k}_{i-1}$ and the recursive equation from \Cref{recur}.
%given $\pp^{i}_{k} = \pp^{j}_{k}$, the transition matrix in \cref{def:Pik} simplifies to $\mathbf{P}^i_k = \mathbf{P}^j_k$, allowing us to omit the upper index and define a Markov chain ${X_n}$ with $\P{X_j = 1} = p_j$ and transition matrix $\mathbf{P}_j = \begin{bmatrix}
%1-\pp_{k} & \pp_{k} \medskip \\
%\rr \pp_{k} & \r\pp_{k}
%\end{bmatrix}$. Therefore, $\{X^{i}_{1}, X^{i}_{2}, \ldots, X^{i}_{i-1}\}$ and $\{X_1,\ldots, X_{i-1}\}$ are equivalent in distribution and Markovian transition. 
\end{proof}
}

% ----------------------------------------

\section{Technical Lemmas}	\label{sec:technical}

\changes{This section states some technical results that will be used throughout this paper. Proofs are given at the end of the paper in \Cref{sec:missing,sec:bernoulli}. The first part is devoted to a recursion that appears in $\G$ in the context of edge probabilities as seen in \Cref{recur}. The second part is about convergence of a sequence of dependent Bernoulli random variables, wherein the dependence between consecutive terms in the sequence is governed by the recursion from the first part. The crucial convergence and concentration result is \Cref{bernoulli-main} which is fundamental to our analysis for proving the average vertex degree asserted in \Cref{avgdeg}. We also generalise this convergence in \Cref{bernoulli-main3}, which may be of independent interest, to a sequence where the consecutive terms satisfy a weaker assumption than the recursion.}

\subsection{Growth Rates in a Recurrence Formula}	\label{sec:recur}

Consider the following recursion formula 
\begin{equation}	\label{def:recur}
x_{n+1} \eq f_{a}(x_{n}) \quad \text{for } \ n \ge 1, \qquad \text{ where } \ f_{a} \sep x \mapsto x \left(1 - a\,x\right) , \quad \text{for } \  a\in (0,1],
\end{equation}
where it is assumed that $x_{1}\in(0,1)$. The function $f_{a}\colon x\mapsto x(1-ax)$ used to generate this recurrence has the following properties from elementary calculus.
	
\begin{lemma}	\label{obsfa}
$f_{a}$ is a concave quadratic that
		\begin{enumerate}
			\item is increasing on $(-\infty,\frac{1}{2a}]$ and decreasing on $(\frac{1}{2a},\infty)$,
			\item has a maximum value of $\frac{1}{4a}$,
			\item does not have any nonzero fixed point,
			\item satisfies $f_{a}(x) \in (0,1)$ for $x\in(0,1)$,
			\item satisfies $f_{a}(x) < x$ for $x \neq 0$,
			\item satisfies  $af_{a}(x) \eq f_{1}(ax)$,
			\item satisfies $f_a(x)< f_b(x)$ for $x > 0$ if $a>b$.
		\end{enumerate}
\end{lemma}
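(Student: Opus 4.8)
The lemma collects elementary facts about the single concave quadratic $f_a$ from \eqref{def:recur}, so the proof is a short exercise in one-variable calculus and elementary algebra; there is no real obstacle, only bookkeeping and some care about the sign of $a$, the hypothesis $a\le 1$, and the interval on which each item is asserted. I would treat the seven items essentially independently, in the order listed.

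For items~(1)--(2): since $f_a(x)=x-ax^2$ has negative leading coefficient $-a$, it is a concave parabola; its derivative $f_a'(x)=1-2ax$ is affine and vanishes at the single point $x^{*}=\tfrac{1}{2a}$, which is the vertex, with $f_a'>0$ for $x<x^{*}$ and $f_a'<0$ for $x>x^{*}$. This gives the monotonicity asserted in~(1), and substituting back gives the maximum value $f_a(x^{*})=\tfrac{1}{4a}$ in~(2).

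Items~(3)--(5) follow by direct algebra from the identity $f_a(x)-x=-ax^2$ and the factorisation $f_a(x)=x(1-ax)$. For~(3), the equation $f_a(x)=x$ rearranges to $ax^2=0$, so $0$ is the unique fixed point and hence there is no nonzero one. For~(5), $f_a(x)-x=-ax^2<0$ for every $x>0$ since $a>0$, so $f_a(x)<x$. For~(4), on $(0,1)$ the factor $x$ is positive, and because $a\le 1$ together with $x<1$ force $ax<1$, the factor $1-ax$ is positive as well, so the product $f_a(x)$ is positive.

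The last two items are one-line identities. For~(6), $af_a(x)=a\,x(1-ax)=(ax)\bigl(1-(ax)\bigr)=f_1(ax)$, just by pulling the factor $a$ inside the bracket. For~(7), $f_a(x)-f_b(x)=x\bigl[(1-ax)-(1-bx)\bigr]=(b-a)\,x^{2}$, which is strictly negative for $x>0$ whenever $a>b$, hence $f_a(x)<f_b(x)$ there. Since every item reduces to such an explicit computation, I expect no genuine difficulty; the places needing the most care are~(1)--(2), where one must check that the sign of $f_a'$ flips exactly once and in the asserted direction, and~(4), where the hypothesis $a\le 1$ is genuinely used.
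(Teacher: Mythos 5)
Your calculus and algebra are correct as far as they go, but your computation in fact \emph{contradicts} items~(1)--(2) of the lemma rather than proving them, and you appear not to have noticed. You correctly find $f_a'(x)=1-2ax$, hence vertex $x^\ast=\frac{1}{2a}$ and maximum value $f_a(x^\ast)=\frac{1}{4a}$. Items~(1)--(2) as stated, however, put the vertex at $a/2$ with maximum $a^2/4$; these coincide with your values only when $a=1$. For $a\in(0,1)$ one has $\frac{1}{2a}>\frac{a}{2}$, so $f_a$ is still increasing on $(a/2,\frac{1}{2a}]$ and item~(1)'s claim of decrease on $(a/2,\infty)$ is false; likewise $\frac{1}{4a}>\frac{a^2}{4}$, so the stated maximum in item~(2) is too small. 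Your write-up says the computation "gives the monotonicity asserted in~(1)" and "the maximum value~$\ldots$ in~(2)" without flagging this mismatch; a careful proof must either record the inconsistency or replace items~(1)--(2) with the corrected vertex $\frac{1}{2a}$ and maximum $\frac{1}{4a}$.

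For context, the paper's own proof contains the same slip in a more flagrant form: it asserts the identity $x(1-ax)=-\left(x-a/2\right)^2+a^2/4$, but the right-hand side expands to $x(a-x)$, which equals $f_a(x)$ only at $a=1$, so the paper's completion of the square is simply incorrect. Your form $f_a(x)=-a\left(x-\frac{1}{2a}\right)^2+\frac{1}{4a}$ is the right one. (Subsequent lemmas mostly reduce to $a=1$ via item~(6) before using quantitative bounds, so the damage downstream may be limited, but the lemma as written is wrong.) Your treatments of items~(3)--(7) are correct and essentially identical to the paper's, and you rightly note where the hypothesis $a\le 1$ is genuinely used in item~(4).
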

 
The terms in the recurrence generated by $f_{a}$ are lower and upper bounded by constant multiples of $1/n$. 

\begin{lemma}	\label{xi-bounds}
Recurrence~\eqref{def:recur} generates a decreasing sequence with each term $x_{n}$, for $n\ge 2$, bounded as
\begin{equation}	\label{piecewise}
\frac{1}{a}\ell_n(x_1) \:\le\: x_{n}  \:<\: \frac{1}{a} \,\min\left\{\frac{1}{4},\, \frac{1}{n- 1+\frac{1}{x_1}}\right\}, \quad \text{where } \
\ell_n(x_1) \define \begin{cases}
\frac{1}{(\sqrt{n}+1)^2}, & n \ge \left(\frac{1}{f_{1}(x_{1})} - 1 \right)^{2} \medskip\\
\frac{f_{1}(x_{1})}{n}, & \text{otherwise.}
\end{cases}
\end{equation}
\end{lemma}

Another result that we will need is lower and upper bounds on the sum of terms in the recurrence for large enough $n$. We show that the partial sums in the recurrence are $\Theta(\log{n})$.

\begin{lemma}	\label{boundsonsum} 
Denote $f_{1}:=f_{1}(x_{1})$ and $\eta(x_{1}) \define 2(1-f_{1})\log{f_{1}} - \left(\frac{5}{2}+\log{2} \right)f_{1}$. 
%\[
 %f_{1}(x_{1})(H_{n^{\ast}-1}-1) - \frac{2}{1+\sqrt{n^{\ast}}} - \log{\left(1+\sqrt{n^{\ast}}\right)},
%\]
For $n\ge (\frac{1}{f_{1}} - 1)^{2}$, we have
%Let $w:=\rr \pp_1$.
	\[
%	\pp_1 + \frac{1}{\rr }\left( \eta(w) + \frac{2}{1+\sqrt{n}} + \log\left(n\right)\right)\,\le\, \E{S_{n}} \,\le\,  \pp_1 + \frac{\log\left(w n+1-w\right)}{\rr } 
ax_{1} + \eta(x_{1}) + \frac{2}{1+\sqrt{n}} + \log{n} \,\le\, a\sum_{i=1}^{n}x_{i} \,\le\, ax_{1} + \log{\left(1 + (n-1)x_{1} \right)} .
	\] 
\end{lemma}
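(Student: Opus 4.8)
The plan is to handle the two inequalities separately, in each case keeping the first term $x_1$ aside and estimating $a\sum_{i=2}^n x_i$ by substituting the term-wise bounds of \mythref{xi-bounds} into the sum and then comparing the resulting elementary sums with integrals. For the upper bound I would use $x_i<\frac1a\cdot\frac1{i-1+1/x_1}$, valid for $i\ge2$ by \mythref{xi-bounds}, which gives $a\sum_{i=1}^n x_i<ax_1+\sum_{i=2}^{n}\frac1{i-1+1/x_1}=ax_1+\sum_{k=1}^{n-1}\frac1{k+1/x_1}$; since $t\mapsto(t+1/x_1)^{-1}$ is positive and decreasing, each summand is at most $\int_{k-1}^{k}(t+1/x_1)^{-1}\,dt$, so the sum is at most $\int_0^{n-1}(t+1/x_1)^{-1}\,dt=\log(n-1+\tfrac1{x_1})-\log\tfrac1{x_1}=\log(1+(n-1)x_1)$, which is exactly the asserted bound (and this half needs no hypothesis on $n$).

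For the lower bound I would split $\{2,\dots,n\}$ at the threshold $N:=(\tfrac1{f_1}-1)^2$ that already appears in \mythref{xi-bounds}, using $x_i\ge\frac1a\cdot\frac{f_1}{i}$ for $2\le i<N$ and $x_i\ge\frac1a\cdot\frac1{(\sqrt i+1)^2}$ for $i\ge N$; since $f_1=f_1(x_1)\le\tfrac14$ we have $N\ge9$, so with $K:=\lceil N\rceil-1$ both blocks $\{2,\dots,K\}$ and $\{K+1,\dots,n\}$ are non-empty once $n\ge N$. The first block contributes at least $f_1\sum_{i=2}^{K}\tfrac1i=f_1(H_K-1)$, which by \mythref{harmonic} is $f_1\log N+O(f_1)$, and $\log N=2\log(\tfrac1{f_1}-1)=2\log(1-f_1)-2\log f_1$ supplies the $-2f_1\log f_1$ inside $\eta$. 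For the second block I would use $\sum_{i=K+1}^{n}\frac1{(\sqrt i+1)^2}\ge\int_{K+1}^{n+1}\frac{dt}{(\sqrt t+1)^2}$; substituting $u=\sqrt t$ shows the antiderivative is $2\log(\sqrt t+1)+\tfrac2{\sqrt t+1}$, and the key point is the identity $\sqrt N+1=\tfrac1{f_1}$, so the lower endpoint contributes exactly $-2\log\tfrac1{f_1}-2f_1=2\log f_1-2f_1$ up to an $O(f_1^2)$ error from replacing $N$ by the integer $K+1$, while the upper endpoint contributes $2\log(\sqrt{n+1}+1)+\tfrac2{\sqrt{n+1}+1}$. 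A short computation using $\log(1+t)>\tfrac{t}{1+t}$ gives $2\log(\sqrt{n+1}+1)+\tfrac2{\sqrt{n+1}+1}\ge\log n+\tfrac2{1+\sqrt n}$, so the entire $n$-dependent part of the lower bound already dominates the $n$-dependent part of the target; collecting terms then reduces the claim to showing that the leftover constant, which has the shape $ax_1+\bigl[\,2\log f_1-2f_1\log f_1+2f_1\log(1-f_1)+O(f_1)\,\bigr]$, is at least $ax_1+\eta(x_1)=ax_1+2(1-f_1)\log f_1-(\tfrac52+\log2)f_1$.

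The routine steps are the two integral comparisons and the entire upper bound; the delicate step, and the main obstacle, is this last one-variable inequality in $f_1\in(0,\tfrac14]$. It is essentially tight near $f_1=\tfrac14$ — the slack there is only a small positive constant — so the bookkeeping cannot be wasteful: one must keep the $\tfrac1{2n}$ term in \mythref{harmonic} (not merely $H_n\ge\gamma+\log n$), use the exact antiderivative of $(\sqrt t+1)^{-2}$ rather than a crude $\Theta(1/t)$ estimate, exploit $\sqrt N+1=1/f_1$, and carefully control the $O(f_1^2)$ correction arising because $(\tfrac1{f_1}-1)^2$ need not be an integer. The precise form of $\eta(x_1)$, in particular the constants $\tfrac52$ and $\log2$, is exactly what is needed for this final inequality to hold over the whole range.
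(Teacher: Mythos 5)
Your approach matches the paper's almost exactly: upper bound by comparing $\sum\frac1{i-1+1/x_1}$ with $\int\frac{dt}{t}$, lower bound by splitting the sum at $n^*=(\tfrac1{f_1}-1)^2$, estimating the first block via the harmonic number and the second via $\int\frac{dt}{(\sqrt t+1)^2}$, and exploiting the identity $1+\sqrt{n^*}=\tfrac1{f_1}$ to collapse the $n^*$-dependent terms. So the skeleton is right. The genuine gap is that you defer the constant-matching step — exactly the step where the proof lives — as ``the delicate part'' without carrying it out, and in doing so you set yourself a harder task than the paper actually tackles. The paper does not need to expand to $O(f_1^2)$ accuracy or invoke the $\tfrac1{2n}$ term of \mythref{harmonic} in any essential way: it writes $H_{n^*-1}-3\ge\log(n^*-1)-\tfrac52$, which uses only $\gamma>\tfrac12$ and the positivity of $\tfrac1{2n}-\tfrac1{8n^2}$, then expands $\log(n^*-1)=\log(1-2f_1)-2\log f_1$ (note the $n^*-1$, not $n^*$, which produces $\log(1-2f_1)$ rather than your $2\log(1-f_1)$), and finishes with $\log(1-2f_1)\ge-\log2$ since $f_1\le\tfrac14$. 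The constant $\eta(x_1)$ is chosen to be exactly the output of these three elementary bounds, so there is no residual one-variable inequality to chase: it closes by construction. Your asymptotic bookkeeping with $O(f_1)$ and $O(f_1^2)$ correction terms would in principle also work, but it is not yet a proof — those error terms need explicit constants, and you would then have to verify the resulting inequality across $f_1\in(0,\tfrac14]$, which is the verification you have skipped. (A small point in your favour: you are right that $n^*$ need not be an integer and requires a $\lceil\cdot\rceil$; the paper glosses over this, though it affects nothing at the level of the stated bound once the $\log(n^*-1)$ expansion is used.)
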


\subsection{Convergence of a Bernoulli sequence}	\label{sec:markovchain}

%We also present several consequences of our theorem, in particular, a generalisation in \Cref{bernoulli-main3} wherein the marginal probabilities of the sequence are sandwiched between two converging sequences.

Consider a sequence of Bernoulli r.v.'s $\{\X_{n}\}$, defined on the same probability space $(\Omega,\mathcal{F},\mathbb{P})$, that are not assumed to be iid. Denote  the marginal probabilities and partial sum by
\[
\pp_{n} \define \P{\X_{n} = 1} \quad n \ge 1, \qquad S_{n} \define \X_{1} + \X_{2} + \cdots + \X_{n} .
\]
We analyse the ratio of this partial sum to $\log{n}$, and so for convenience let us denote
\[
\YY_{n} \define \frac{S_{n}}{\log{n}} \, .
\]
Our main result of this section is that when the probability sequence $\{\pp_{n} \}$ satisfies the recurrence formula \eqref{def:recur} with $a$ equal to some constant $\p$, then $\YY_{n}$ concentrates to $1/\p$. Furthermore, we also establish that the mean of $\YY_{n}$, which is equal to $\E{S_{n}}/\log{n}$, converges to $1/\p$ and the distribution of $\YY_{n}$ grows at a quadratic rate in the limit.

\begin{theorem}	\label{bernoulli-main}
Suppose the sequence $\{\pp_{n}\}$ is such that there is some $\p\in(0,1]$ for which $\pp_{n} = f_{\p}(\pp_{n-1})$ for all $n\ge 2$. Then, $\E{\YY_{n}} \longrightarrow 1/\p$ and $\YY_{n} \overset{p}{\longrightarrow} 1/\p$. 
\ifthenelse{\isundefined{\ics}}{
Furthermore, the distribution of $\YY_{n}$ grows at a quadratic rate in the limit as follows, %****
\[
\frac{1}{1-\theta} \:\ge\: \lim_{n\to\infty} \P{\frac{\YY_{n}}{\E{\YY_{n}}} \,\ge\, 1-\theta} \:\ge\: \theta^2 , \qquad \theta\in[0,1).
\]
}
{}
\end{theorem}

%This theorem is proved in the next few sections. %\Cref{sec:convergence1} --- \Cref{sec:distribution}. 
We discuss here some consequences of this theorem. First, we have that the partial sum is dominated by arbitrary powers of $n$. %and a bound on the distribution function of $S_{n}$.

\begin{corollary}
Under the conditions of \Cref{bernoulli-main}, w.h.p. $S_{n} = O(n^{\epsilon})$ for every $\epsilon >0$.
\end{corollary}
\begin{proof}
Fix any $\epsilon > 0$. Note that $\log{n} = O(n^{\epsilon})$, and so there exists a constant $C_{\epsilon}$ and integer $N_{\epsilon}$ such that $\log{n} \le C_{\epsilon}n^{\epsilon}$ for all $n\ge N_{\epsilon}$. Therefore, for all $n\ge N_{\epsilon}$,
\begin{align*}
\P{\frac{S_{n}}{C_{\epsilon}n^{\epsilon}} \,\ge\, 1 + \frac{1}{\p}} \:\le\: \P{\frac{S_{n}}{C_{\epsilon}n^{\epsilon}} \,\ge\, \epsilon + \frac{1}{\p}} 
\:\le\: \P{\frac{S_{n}}{\log{n}} \,\ge\, \epsilon + \frac{1}{\p}}
\:\le\: \P{\abs{\frac{S_{n}}{\log{n}}  - \frac{1}{\p}} \,\ge\, \epsilon} ,
\end{align*}
where the first inequality is due to $\epsilon + 1/\p < 1 + 1/\p$, and the last inequality is due to $\abs{S_{n}/\log{n}  - 1/\p} \ge S_{n}/\log{n}  - 1/\p$. The concentration of $S_{n}/\log{n}$  to $1/\p$ in \Cref{bernoulli-main} means that the rightmost probability converges to 0, and then the above chain implies that the leftmost probability also goes to 0, which leads to $S_{n} =O(n^{\epsilon})$ w.h.p.
\end{proof}

The second consequence of our theorem is that it subsumes the case where the stochastic process $\{\X_{n}\}$ forms a non-homogenous Markov chain whose transition matrix obeys a decay property.

\begin{corollary}	\label{bernoulli-main2}
Suppose that $\{\X_{n}\}$ is a Markov chain of Bernoulli r.v.'s, so that  for every $n\ge 1$ we have
\[
\P{\X_{n+1}=y_{n+1}}[\X_{1}=y_{1},\dots,\X_{n}=y_{n}] \eq \P{\X_{n+1}=y_{n+1}}[\X_{n}=y_{n}],
\]
and also suppose that there is some $\p\in[0,1)$ such that
\[ %\label{recurrence2}
\P{\X_{n+1}=1}[\X_{n} = 0] \eq  \P{\X_n=1}, \qquad
\P{\X_{n+1}=1}[\X_{n} = 1] \eq (1-\p) \, \P{\X_n=1} .
\]
Then, $\E{\YY_{n}} \longrightarrow 1/\p$ and $\YY_{n} \overset{p}{\longrightarrow} 1/\p$.
\end{corollary}
\begin{proof}
This is a special case of \Cref{bernoulli-main} because it is straightforward to verify that $\pp_{n} = f_{\p}(\pp_{n-1})$ (also see \Cref{recur}  for arguments in the context of the chain $\{X^{i}_{j}\}_{j < i}$ for each vertex $v_{i}$ in $\G$).
\end{proof}

%Thus, \Cref{bernoulli-main} becomes relevant to our random graph $\G$ by taking $\p = \rr $, %because as noted earlier in \Cref{markovchain}, each vertex $v_{i}$ in this graph is associated with a Bernoulli sequence $\{X^{i}_{1},\dots,X^{i}_{i-1}  \}$ that forms such a Markov chain, 
%and we use this convergence result to prove concentration of the average degree of this graph in \Cref{avgdeg}.

A third consequence of our main theorem %and is stated below and proved  later in \Cref{sec:concentration2}, 
is that we show convergence of expectation for the sequence $\{\YY_{n} \}$ under a weaker assumption, namely, when the probability sequence $\{\pp_{n}\}$ is sandwiched between two sequences $\{f_{a_{n}}(\pp_{n-1}) \}$ and $\{f_{b_{n}}(\pp_{n-1}) \}$ that are generated using two converging sequences $\{a_{n}\}$ and $\{b_{n}\}$.

\begin{corollary}	\label{bernoulli-main3}
Suppose there exist two converging sequences $\{a_n\}, \{b_n\} \subset (0,1]$ having the same limit $\p > 0$ and such that the corresponding sequences $\{f_{a_{n}} \}$ and $\{f_{b_{n}} \}$ bound the marginal probabilities of $\{\X_{n}\}$ as $f_{a_{n}}(\pp_{n-1}) \,\le\, \pp_n \,\le\, f_{b_{n}}(\pp_{n-1})$ for all $n \ge 2$. 
Then, $\E{\YY_{n}} \longrightarrow 1/\p$.
%\[
%	\lim_{n\to \infty}\frac{\E{S_{n}}}{\log n} \eq \frac{1}{\p}.
%\]
\end{corollary}

Note that the weaker assumption made in this corollary also means that we do not guarantee convergence in probability, unlike as in \Cref{bernoulli-main2}.

Let us also make some remarks about the claims in \Cref{bernoulli-main}.

%Note that in general, and even for Bernoulli r.v.'s, convergence in probability to a constant does not imply convergence of means to the same constant, nor is the converse true. One situation where the two convergences are to different limits is when the r.v.'s take value zero w.h.p. A simple example of this is the sequence $\X_{n} \in \{0, n\}$ which takes value $\X_{n} = n$ with probability $1/n$ so that $\E{\X_{n}} = 1$ for all $n$, and it is easy to see that $\X_{n}$ concentrates to zero due to $\P{\X_{n}=0} \to 1$ as $n\to\infty$. 

\begin{remark}	\label{convgrem1}
%We have asserted that $\E{\YY_{n}} \longrightarrow 1/\p$ and $\YY_{n} \overset{p}{\longrightarrow} 1/\p$. 
For a general random sequence, convergence of expected value and convergence in probability do not imply each other, but when the sequence is uniformly integrable then the latter implies the former \cite[cf.][]{stackex}. This indicates that if we can show that the sequence $\{\YY_{n}\}$ is uniformly integrable then establishing $\YY_{n} \overset{p}{\longrightarrow} 1/\p$ gives us $\E{\YY_{n}} \longrightarrow 1/\p$. However, we do not use this implication in our proof since we do not think that $\{\YY_{n}\}$ is uniformly integrable. %(cf.~\Cref{unifint}). 
Instead, our proof first establishes convergence of $\E{\YY_{n}}$ and uses it to argue convergence in probability. %while also using Chebyshev's inequality and Slutsky's theorem.
\end{remark}

\begin{remark}	\label{convgrem2}
For a random sequence $\{\XX_{n}\}$, there are two notions when talking about the expected values converging. The first is that of convergence \emph{of} expectation where there is another r.v. $X$ such that $\E{\XX_{n}} \longrightarrow \E{\XX}$ (if $\E{\XX_{n}}$ converges to a finite number $c$ then one could simply define $\XX$ to be equal to $c$  a.s.). The second  notion is that of convergence \emph{in} expectation where $\E{\abs{\XX_{n} - \XX}} \longrightarrow 0$ for some r.v. $\XX$. Markov's inequality gives us that the latter implies convergence in probability, and it is well-known that the reverse implication is true if and only if $\{\XX_{n}\}$ is uniformly integrable. Since we do not think that uniform integrability holds for our sequence $\{\YY_{n}\}$, % (cf.~\Cref{unifint}), 
we cannot use \Cref{bernoulli-main} to say that $\{\YY_{n}\}$ converges in expectation to $1/\p$.
% $\{ \E{\abs{\YY_{n} - 1/\p}}\}$ converges to 0. This seems to be a stronger claim and we do not know whether it is true; if it were to be proven then it would immediately imply our other assertion that $\YY_{n}$ concentrates to $1/\p$ because for any random sequence, 
\end{remark}

%\begin{remark}	\label{convgrem3}
%\akshay{Also compare with what Chernoff/Hoeffding/McDiarmid bounds would give for distribution in independent case} 
%\end{remark}

%\akshay{Uniform integrability of $\{\YY_{n}\}$}

% ----------------------------------------

\section{Vertex Degrees}	\label{sec:lower}

Our main goal here is to prove \Cref{lower}. A key part of this proof is analyzing the average vertex degree in this graph. Let us formally define vertex degree and average degree in $\G$. The degree of vertex $v_i$ is a random variable denoting the number of edges incident on $v_i$, given by the expression
\[
\deg{v_i} \eq \sum_{j=1}^{i-1} X^{i}_{j} + \sum_{j=i+1}^n X^{j}_{i} .
\]
The average degree $d(\G)$ is the average vertex degree across the entire graph,
\[
d(\G) \eq \frac{1}{n} \sum_{i=1}^n \deg{v_i} .
\]

\subsection{Concentration of average degree}

Our main tool in lower bounding $\alpha(\G)$ is the following concentration result about average degree, which can also be equivalently stated as saying that $d(\G)/\log{n} \overset{p}{\longrightarrow} \revise{2/\rr} $. To establish this concentration, we use a key technical result about a dependent Bernoulli sequence; the analysis of this sequence is presented later in \Cref{sec:bernoulli}.

\begin{proof}[\textbf{Proof of \Cref{avgdeg}}]
% -------------------------
% NEED TO CHECK THIS

% Let $S_i$ be the partial sum of Bernoulli sequence $\{\X_i\}_{i\ge1}$ defined by \eqref{recurrence2} and then 
%	\[
%	\deg{v_i} = \sum_{j=1}^{i-1} X^{(i)}_{j} + \sum_{t=i+1}^{n} X^{(t)}_i = S_{i-1} + \sum_{t=i+1}^{n} X^{(t)}_{i}
%	\]
%Note that $\{ X^{(j)}_{i}\}_{j\ge i+1}^{n}$ in the second summation are mutually independent and they are the $i$-th item $\X_{i}$ in $\{\X_{j}\}_{j\ge 1}$ as in \eqref{recurrence2}. Thus $\{ X^{(t)}_{i}\}_{t\ge i+1}^{n}$ is indeed a sequence of i.i.d.  random variables. By the law of big number, we know a.a.s.
%	\[
%	 \lim_{n\rightarrow \infty }\frac{\sum_{t=i+1}^{n} X^{(t)}_{i}}{n-i} = \E{\X_{i}} =  \P{\X_{i}=1}
%	\]
%Moreover, $\deg{v_{i}}/n$ should also converge to $\P{\X_{i}=1}$ a.a.s. When $i\ge 2$, together with \Cref{recur}, we have a.a.s.
%\[
%\frac{\psi_{p,\r}(i)}{\rr } \le \lim_{n\rightarrow \infty }\frac{\deg{v_{i}}}{n} \le \frac{1}{\rr i}
%\]
%where $\psi_{p,\r}(i)$ is constant depending on $i$. Note that $\deg(v_1) \eq np$ a.a.s. because $\deg(v_1)$ is the sum of i.i.d. Bernoulli variables with parameter $p$.
%Therefore, $\deg{v_i} = \Theta(n)$. 
% -------------------------
\changes{Denote $S^{i}_{i-1} := \sum_{j=1}^{i-1}X^{i}_{j}$ for $i \ge 2$, and $S^{1}_{0} = 0$.} 
The sum of vertex degrees is
\[
\sum_{i=1}^n \deg{v_{i}} = \sum_{i=1}^n \left( \sum_{j=1}^{i-1}X^{i}_{j} + \sum_{k=i+1}^n X^{k}_{i}\right) = 2 \sum_{i=1}^{n} \sum_{j=1}^{i-1} X^{i}_{j} = 2 \sum_{i=1}^n S_{i-1}^{i} = 2\sum_{i=1}^{n-1} S_{i}^{i+1} ,
\]
\changes{which leads to the average degree of the graph becoming $d(\G) = \frac{2}{n} \sum_{i=1}^{n-1} S_{i}^{i+1}$. 

Now we invoke \Cref{markovchain} and its Markov chain $\{\mathcal{X}_{1},\ldots,\mathcal{X}_{n} \}$. From this lemma, we know that $S_{i} := \mathcal{X}_{1} + \cdots + \mathcal{X}_{i}$ has the same distribution as $S^{i+1}_{i}$.} We denote this by $S_{i}^{i+1}\sim S_{i}$. Also, since each random variable in the Markov process $\{\mathcal{X}_{i} \}_{i\ge 1}$ is nonnegative, we have $\mathbf{0} \preceq S_{1} \preceq S_{2} \preceq \cdots \preceq S_{n}$, where $\preceq$ denotes first-order stochastic dominance ($X \preceq Y$ means that $P\{X\ge c\}\le P\{Y\ge c\}$ for all real $c$) and where $\mathbf{0}$ denotes the random variable that takes value 0 with probability 1. Hence, $S^{i+1}_{i}\sim S_{i}$ implies that $ S^{i+1}_{i}\preceq S_{n}$ for all $i\le n$, which leads to
\begin{equation}	\label{ineq:graph-degree}
d(\G) = \frac{2}{n} \sum_{i=1}^{n-1} S_{i}^{i+1} \preceq \frac{2(n-1)}{n}S_{n} \preceq 2S_{n} .
\end{equation}

From \Cref{bernoulli-main2}, we have that for every $\epsilon > 0$,
\begin{equation}\label{eqn:sn}
\lim_{n\rightarrow \infty} \P{ \left| \frac{S_{n}}{\log n} - \frac{1}{\rr } \right| \le \frac{\epsilon}{\rr } } = 1
\end{equation}
and then \eqref{ineq:graph-degree} yields the following upper bound \changes{for every $\epsilon_{0}>0$},
\begin{equation}
	\label{ineq:degree-graph-to-sum}
	\P{ \frac{d(\G)}{\log n}  \ge \frac{2+\epsilon_0}{\rr } } 
	\le 
	\P{ \frac{2S_{n}}{\log n} \ge \frac{2+\epsilon_0}{\rr } }
	\le 
	\P{\left| \frac{S_{n}}{\log n} - \frac{1}{\rr } \right| \ge \frac{\epsilon_0}{2\rr }} .
\end{equation}
Moreover,
\[
\P{\left| \frac{S_{n}}{\log n} - \frac{1}{\rr } \right| \ge \frac{\epsilon_{0}}{2\rr }} 
\le 
1 - \P{\left| \frac{S_{n}}{\log n} - \frac{1}{\rr } \right| \le \frac{\epsilon_{0}}{3\rr }}. 
\]
If we pick $\epsilon:=\epsilon_{0}/3$ in \eqref{eqn:sn} and take limit of both sides, the right hand side of the above goes to 0 and so does the left.
Together with \eqref{ineq:degree-graph-to-sum}, we have for all $\epsilon_0 >0$,
\begin{equation}
\label{ineq:degree_upper_bound}
\lim_{n\rightarrow \infty} \P{ \frac{d(\G)}{\log n}  \ge \frac{2+\epsilon_0}{\rr }} = 0 \quad\implies\quad \lim_{n\rightarrow \infty} \P{ \frac{d(\G)}{\log n}  \le \frac{2(1+\epsilon_0)}{\rr } } = 1 .
\end{equation}
Therefore, $d(\G)$ is asymptotically bounded above by $2(1+\epsilon_0)/\rr  \log n$. For deriving the lower bound, we first mention that
for $n_c:=\lfloor cn\rfloor\in \mathbb{Z}_{\ge 0}$ with $c\in (0, 1)$,
\[
d(\G) = \frac{2}{n} \sum_{i=1}^{n-1}S_{i}^{i+1} \succeq \frac{2}{n}  \sum_{i=n_c}^{n-1}S_{i}^{i+1}  \succeq \frac{2(n-n_c)}{n} S_{n_c}\succeq 2(1-c)S_{n_c} .
\]
For the lower bound of $\frac{d(\G)}{\log n}$, let us now fix $\epsilon >0$. From \eqref{eqn:sn},
\[
\forall \epsilon > 0,\;\, \exists N_{\epsilon} \in \mathbb{Z}_{\ge 0} \; \text{ s.t. } \; \P{ \left| \frac{S_{n}}{\log n} - \frac{1}{\rr } \right| \le \frac{\epsilon}{2\rr} } \ge 1, \quad \forall n\ge N_{\epsilon}.
\]
Let 
\begin{equation}
	\label{eqn:c-define}
	c:= \frac{\epsilon-2\epsilon^{2}+2\epsilon^3}{2-\epsilon}
\end{equation}
Note that $c\in (0,1)$ is well-defined when $\epsilon \in (0, 1)$. In addition, $c$ is monotone increasing in terms of $\epsilon$. Define 
\[
M_\epsilon := \max\left\{\exp\left\{-\frac{1+\epsilon^{2}}{\epsilon^{2}}\log c  \right\}, \frac{N_{\epsilon}}{c} \right\} .
\]
If $n\ge M_{\epsilon}$, then $n\ge N_\epsilon/c$ and so
$n_{c}= \lfloor cn\rfloor \ge  N_{\epsilon}$. Moreover, 
\begin{equation}
\label{eqn:log_nc_ratio}
\frac{\log n}{\log n_{c}} = \frac{\log n}{\log n + \log c - 1} \le 1+\epsilon^2 .
\end{equation}
From $d(\G) \succeq 2(1-c)S_{n_c}$, 
\[
\P{ \frac{d(\G)}{\log n}  \ge \frac{2(1-\epsilon)}{\rr } } 
\ge
\P{ \frac{2(1-c)S_{n_{c}}}{\log n}  \ge \frac{2(1-\epsilon)}{\rr } } 
\ge
\P{ \frac{S_{n_{c}}}{\log n_{c}} \ge \frac{(1+\epsilon^{2})(1-\epsilon)}{(1-c)\rr } } 
\]
where the last inequality comes from $(1+\epsilon^2) S_{n_c}/\log n \ge S_{n_c}/\log n_c $ which is a straightforward result of \eqref{eqn:log_nc_ratio}. Substitute \eqref{eqn:c-define} and simplify the above term, we have
\[
\begin{aligned}
\P{ \frac{S_{n_{c}}}{\log n_{c}}  \ge \frac{(1+\epsilon^{2})(1-\epsilon)}{(1-c)\rr } }  
&= 
\P{  \frac{S_{n_{c}}}{\log n_{c}}  \ge \frac{2-\epsilon}{2\rr } } \\
&\ge
\P{ \left|  \frac{S_{n_{c}}}{\log n_{c}} - \frac{1}{\rr } \right| \le \frac{\epsilon}{2\rr } } 
\ge 1 - \epsilon .
\end{aligned}
\]
So far, we have shown that, for all $\epsilon >0$, there exists $M_{\epsilon}\in \mathbb{Z}_{\ge 0}$ such that 
\[
\P{ \frac{d(\G)}{\log n}  \ge \frac{2(1-\epsilon)}{\rr } } 
\ge 1-\epsilon, \quad \forall n\ge M_{\epsilon}.
\]
Consequently, $\frac{d(\G)}{\log n}  \ge  \frac{2(1-\epsilon)}{\rr }$ w.h.p.. Together with the upper bound, we obtain
\[
\lim_{n\rightarrow \infty} \P{\left|\frac{d(\G)}{\log n} - \frac{2}{\rr} \right|\le \frac{\epsilon}{\rr} } = 1, \quad \forall \epsilon > 0,
\]
which concludes our proof.
\end{proof}

\subsection{Establishing the lower bounds}

We will use the following lower bound on stability number of general graphs that arises from applying Tur\'an's theorem, which says that any graph not having a $K_{t+1}$ subgraph cannot have more edges than the Tur\'an graph corresponding to $n$ and $t$.

\begin{lemma}[{\cite[][Theorems 10 and 11]{coffman2003bounds}}]	\label{avgdeglem}
For any $n$-vertex graph $G$ having $m$ edges, the stability number $\alpha(G)$ can be lower bounded as follows,
%in terms of the average degree $d(G)$ of the graph as follows,
\[
\alpha(G)\ge \max\left\{\frac{n^{2}}{n + 2m},\, \frac{2n-m}{3} \right\}.
\]
\end{lemma}

The first bound is equivalent to $n/(1+d(G))$, where $d(G) = 2m/n$ is the average degree of $G$, and this bound has been noted before, for e.g. by \citet{griggs1983lower}. It is dominated by the second bound if and only if $n/2 < m < n$. 

We are now ready to argue our first main lower bound.
%that w.h.p. $\alpha\left(\G \right) \ge \frac{\rr }{2+\epsilon} \cdot \frac{n}{\log n}$.

\begin{proof}[\textbf{Proof of \Cref{lower}}]
For fixed $\epsilon > \epsilon_{0} > 0$, we observe if $n$ is sufficiently large then
\begin{equation}	\label{thm1eq}
\P{\alpha(\G) \ge \frac{n}{\frac{2+\epsilon}{\rr }\log n} } \ge \P{\alpha(\G) \ge \frac{n}{\frac{2+\epsilon_{0}}{\rr } \log n + 1} } \ge \P{\frac{d(\G)}{\log n} \le \frac{2+\epsilon_{0}}{\rr } } ,
\end{equation}
where the first inequality follows from $\frac{2+\epsilon}{\rr }\log n > \frac{2+\epsilon_{0}}{\rr } \log n + 1$ for $n > e^{\frac{\rr}{\epsilon - \epsilon_{0}}}$, and the second inequality has used the first bound in \Cref{avgdeglem} which tells us that $\alpha(\G) \ge n/(1 + d(\G))$. After taking the limit throughout in \eqref{thm1eq}, the right-most probability goes to 1 by \Cref{avgdeg}, and so the left-most term also has limit 1, which yields the lower bound of $\Omega(n/\log{n})$ on $\alpha(\G)$.
\end{proof} 
% is equal to 1 because we have shown this limit in the proof of \Cref{avgdeg} in equation~\eqref{ineq:degree_upper_bound}. 
%that for every $\epsilon > 0$,
%\[
%\lim_{n\rightarrow \infty}  \P{\frac{d(\G)}{\log n} \le \frac{2+\epsilon}{\rr } } \eq 1.
%\]

%Along with proof of \Cref{avgdeg}, we will show a concentration result on vertex degree $\deg(v_i)$ for $i\ge 2$, 
%\[
%\frac{\psi_{p,\r}(i)}{\rr } \le \lim_{n\rightarrow \infty }\frac{\deg{v_i}}{n} \le \frac{1}{\rr i}
%\]

\revise{The above derived lower bound can also be stated in terms of the prime-counting function $\pi(n)$, which is defined as the number of primes less than or equal to $n$.

\begin{corollary}	\label{primes}
For every $\epsilon > 0$, we have w.h.p. that $\alpha(\G) \ge \dfrac{\rr }{2(2\epsilon+1)}\,\pi(n)$.
\end{corollary}
\begin{proof}
The well-known Prime Number Theorem  says that $\lim_{n\to\infty}\frac{\pi(n)}{n/\log{n}} \eq 1$. 
%It is also known that $n/\log{n} < \pi(n)$ for $n\ge 17$, 
For every $\epsilon > 0$, replacing $n/\log{n}$ in the leftmost event in \eqref{thm1eq} with its lower bound $\pi(n)/(1+\epsilon)$ implies that w.h.p.
\[
\alpha(\G) \ge \frac{\rr}{2+\epsilon}\,\frac{\pi(n)}{1+\epsilon} .
\]
Since the denominator $\epsilon^{2} + 3\epsilon + 2$ is less than $4\epsilon+2$, our proof is complete.
\end{proof}
}

%--------------------------------------------

\section{Estimating Edge Probabilities} \label{sec:edge-prob}

%We define the following function which will appear at many places in our analysis,
%\begin{equation}	\label{def:psi}
%\psi_{p,\r}(n) \define %\ell_n(\rr p) .
%\begin{cases}
%\frac{1}{(\sqrt{n}+1)^2}, & n \ge \left(\frac{1}{\gamma} - 1 \right)^{2} \medskip\\
%\frac{\gamma}{n}, & \text{otherwise.}
%\end{cases}
%\end{equation}
%where the constant $\gamma$ is defined as
%\[
%\gamma \define f_{1}\left(\rr p\right) \eq \rr p \left(1 - \rr p \right) .
%\]

%The transition matrix in \Cref{def:Pik} for the Markov chain $\{X^{i}_1, X^{i}_{2},\dots, X^{i}_{i-1}\}$ 

\changes{We establish bounds on the different probabilities that arise in our analysis in the next two sections on the greedy algorithm and an upper bound on $\alpha(\G)$. The most crucial result, which comes at the end of this section in \Cref{Prob-Ak-is-Indep}, is an upper bound on the probability for a subset of vertices to be pairwise disconnected. Towards this end, we establish several lemmas, starting with the basic question of bounding the probability for an edge to exist in the random graph. Recall the notation from \Cref{sec:chain}.}

\begin{lemma}	\label{edgeprob}
For every $2 \le j \le i-1$, we have
\[
\frac{\ell_{j}\left(\rr p\right)}{\rr }\:\le\: \pp^{i}_{j} \:<\: \min\left\{\pp^{i}_{j-1},\, \frac{1}{\rr j}\right\} .
\]
\end{lemma}
\begin{proof}
By \Cref{recur}, for every $i$, the sequence $\{\pp^{i}_{1},\pp^{i}_{2},\dots,\pp^{i}_{i-1}\}$ obeys the recursion $x_{k+1} = x_{k}\left(1 - \rr x_{k}\right)$ starting with $x_{1} = \pp^{i}_{1} = p$. This recursion is the same as the one analysed earlier in \eqref{def:recur} and %consequently, we obtain bounds on the edge probabilities (i.e., success probability of each Bernoulli r.v. $X^{i}_{j}$). \Cref{recur} tells us that $\pp^{i}_{j+1} = f_{\rr }(\pp^{i}_{j})$ for $1\le j \le i-2$. 
therefore, the claimed bounds follow from applying \Cref{xi-bounds} with $a=\rr$, and relaxing the upper bound of $1/\rr (j-1+1/p)$ to $1/\rr j$ because $1/p \ge 1$.
\end{proof}

\revise{The transition matrix in \cref{def:Pik}} gives conditional probabilities for consecutive \revise{Bernoulli} variables \revise{$X^{i}_{j-1}$ and $X^{i}_{j}$}. For two arbitrary variables \revise{$X^{i}_{j}$ and $X^{i}_{k}$ corresponding to any vertex $v_{i}$}, the conditional probability can be bounded as follows.

\begin{lemma}	\label{ineqOfCondProb}
For $1\le k< j< i$,
\[
 1-\pp^{i}_{j-1} \le \P{X^i_j=0}[X^i_{k}=0] \le \rr \pp^{i}_{j-1}, \qquad \r\pp^{i}_{j-1} \le \P{X^i_j=1}[X^i_{k}=1] \le \pp^{i}_{j-1} .
\]
\end{lemma}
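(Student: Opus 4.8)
The plan is to peel off a single step of the Markov chain from \mythref{markovchain}: condition on the intermediate variable $X^{i}_{j-1}$, use the Markov property to discard the conditioning on $X^{i}_{k}$ inside the resulting one-step probabilities, and then observe that what remains is a convex combination of the two one-step transition probabilities, which automatically lies between them. Since the one-step probabilities into state $1$ are $\P{X^{i}_{j}=1}[X^{i}_{j-1}=0]=\pp^{i}_{j-1}$ and $\P{X^{i}_{j}=1}[X^{i}_{j-1}=1]=\r\pp^{i}_{j-1}$ by \mythref{def:markov-graph} (and into state $0$ their complements $1-\pp^{i}_{j-1}$ and $1-\r\pp^{i}_{j-1}$), these endpoints are exactly the bounds we want.

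In detail, fix $1\le k<j<i$. Because $k\le j-1$, the Markov property gives $\P{X^{i}_{j}=1}[X^{i}_{j-1}=s,\,X^{i}_{k}=1]=\P{X^{i}_{j}=1}[X^{i}_{j-1}=s]$ for $s\in\{0,1\}$, so conditioning on the value of $X^{i}_{j-1}$ via the law of total probability yields
\[
\P{X^{i}_{j}=1}[X^{i}_{k}=1] \eq \pp^{i}_{j-1}\,\P{X^{i}_{j-1}=0}[X^{i}_{k}=1] \;+\; \r\,\pp^{i}_{j-1}\,\P{X^{i}_{j-1}=1}[X^{i}_{k}=1].
\]
The two conditional probabilities on the right sum to $1$, so the right-hand side is a convex combination of $\r\pp^{i}_{j-1}$ and $\pp^{i}_{j-1}$; since $\r\in(0,1]$ it lies in $[\r\pp^{i}_{j-1},\,\pp^{i}_{j-1}]$, which is the second inequality. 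The first inequality is proved identically, now conditioning on $X^{i}_{j-1}$ inside $\P{X^{i}_{j}=0}[X^{i}_{k}=0]$: the relevant one-step probabilities are $1-\pp^{i}_{j-1}$ (from state $0$) and $1-\r\pp^{i}_{j-1}$ (from state $1$), so the expression is again a convex combination, landing in $[\,1-\pp^{i}_{j-1},\,1-\r\pp^{i}_{j-1}\,]$.

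I do not expect any real obstacle here: the entire content is the one-step conditioning above, and the two endpoints are handed to us by \mythref{def:markov-graph}. The points that need a little care are purely bookkeeping — all conditionings are on positive-probability events since $\pp^{i}_{k}=\P{X^{i}_{k}=1}\in(0,1)$ in the relevant range (from $\pp^{i}_{1}=p$ and the strict bounds of \mythref{recur}); the degenerate case $k=j-1$ is harmless since the convex combination then collapses to one of its endpoints; and the non-homogeneity of the chain never enters, because we only ever step back from index $j$ to index $j-1$. The single thing to resist is the temptation to iterate the recursion $\pp^{i}_{j}=\pp^{i}_{j-1}(1-\rr\pp^{i}_{j-1})$ all the way back to index $k$, which would be both more work and unnecessary for the stated bounds.
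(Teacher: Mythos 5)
Your derivation is correct and is, in substance, the paper's own argument dressed in different notation: the paper writes the $(j-k)$-step transition matrix as a product $\mathbf{M}\,\mathbf{P}_{j-1}$, uses that $\mathbf{M}$ is stochastic, and reads off the $(1,1)$ and $(2,2)$ entries of $\mathbf{M}\,\mathbf{P}_{j-1}$ as convex combinations of the column entries of $\mathbf{P}_{j-1}$; you instead condition on the value of $X^{i}_{j-1}$ and invoke the Markov property to reduce to the same one-step quantities. These are the same factorisation, so there is no genuine difference in route.

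There is, however, a point you glossed over with ``these endpoints are exactly the bounds we want.'' Your (correct) upper bound on $\P{X^i_j=0}[X^i_{k}=0]$ is $1-\r\pp^{i}_{j-1}$, whereas the lemma as printed claims $\rr\pp^{i}_{j-1}=(1-\r)\pp^{i}_{j-1}$, and these are not the same quantity. The printed bound cannot be right: by \mythref{recur}, $\pp^{i}_{j-1}$ decays like $1/j$, so $\rr\pp^{i}_{j-1}\to 0$ while the claimed lower bound $1-\pp^{i}_{j-1}\to 1$, making the stated interval empty for all but the smallest $j$. The same misprint sits in \mythref{markovchain}, where the $(2,1)$-entry of $\mathbf{P}^{i}_{k}$ is written as $\rr\pp^{i}_{k}$, which makes the second row sum to $\pp^{i}_{k}$ rather than $1$; the correct entry is $1-\r\pp^{i}_{k}$, and with that correction the paper's own matrix-product argument delivers exactly your interval $[\,1-\pp^{i}_{j-1},\,1-\r\pp^{i}_{j-1}\,]$. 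So your proof is sound and proves what the lemma should say, but when a blind proof attempt ends up establishing a different bound from the one it was asked to establish, you should say so explicitly — identify the target as a misprint and state the repaired version you are actually proving, rather than silently swapping it in.
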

\begin{proof}
Fix any $i\ge 2$. For ease of readability, we suppress the superscript $i$ in the probabilities. The Markovian property gives us $\P{X^i_j=0}[X^i_{k}=0] \eq \left(\mathbf{P}^{\revise{i}}_{k}\mathbf{P}^{\revise{i}}_{k+1}\cdots \mathbf{P}^{\revise{i}}_{j-1}\right)_{11}$, \revise{where the $P^{i}_{k}$ matrix defined in \cref{def:Pik} is used}. Since $\mathbf{P}^{\revise{i}}_{k}$ is a stochastic matrix for all $k$, the product matrix $\mathbf{M} := \mathbf{P}^{\revise{i}}_{k}\mathbf{P}^{\revise{i}}_{k+1}\cdots \mathbf{P}^{\revise{i}}_{j-1}$ is also stochastic, which implies that $\left(\mathbf{M} \, \mathbf{P}^{\revise{i}}_{j-1}\right)_{11} \eq (1-\pp^{\revise{i}}_{j-1})\revise{\mathbf{M}_{11}} + (\rr \pp^{\revise{i}}_{j-1})(1-\revise{\mathbf{M}_{11}})$ , thereby leading to $1-\pp^{\revise{i}}_{j-1}\le \left(\mathbf{M} \, \mathbf{P}^{\revise{i}}_{j-1}\right)_{11} \le \rr \pp^{\revise{i}}_{j-1}$. A similar argument gives bounds on the other conditional probability after observing that $\P{X^i_j=1}[X^i_{k}=1] = \left(\mathbf{M} \, \mathbf{P}^{\revise{i}}_{j-1}\right)_{22}$.
%	\[
%	\P{X^i_j=1}[X^i_{k}=1] \eq \left(AP^{(j-1)}\right)_{22} \eq \pp_{j-1}a_{21}+\r\pp_{j-1}(1-a_{21})\in \left[\r\pp_{j-1}, \pp_{j-1}\right] . \qedhere
%	\]	
\end{proof}

We now use these bounds on edge probabilities to estimate the probability that a subset of vertices is independent. The first step towards this is to recognise that this probability can be expressed as a product of probabilities for a vertex to be disconnected from a subset.

\begin{lemma}	\label{probproduct}
For any subset $S := \{v_{j_1},v_{j_2},\ldots, v_{j_k} \}$ of $k$ vertices in $\G$, we have that
\[	
\P{S \text{ is independent}} \eq \prod_{i=2}^k\P{v_{j_i} \text{ is disconnected from }\{v_{j_1}, \ldots, v_{j_{i-1}}\}} .
\]
\end{lemma}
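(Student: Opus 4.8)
The plan is to peel off vertices one at a time from the "top" of the set (i.e., in decreasing order of their index), using the chain rule for probabilities together with the key structural fact that all edges incident to a fixed vertex $v_{j_i}$ are mutually independent of all edges incident to the vertices $v_{j_1},\dots,v_{j_{i-1}}$ of smaller index. Concretely, relabel so that $j_1 < j_2 < \dots < j_k$, which is without loss of generality since $S$ is a set. For $2 \le i \le k$, let $A_i$ be the event that $v_{j_i}$ is disconnected from $\{v_{j_1},\dots,v_{j_{i-1}}\}$, i.e.\ that none of the edges $(v_{j_\ell}, v_{j_i})$ for $\ell < i$ are present; equivalently $A_i = \bigcap_{\ell=1}^{i-1}\{X^{j_i}_{j_\ell} = 0\}$. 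Then "$S$ is independent" is exactly the event $\bigcap_{i=2}^k A_i$.

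First I would write, by the chain rule,
\[
\P{S \text{ is independent}} \eq \P{A_k \cap A_{k-1} \cap \dots \cap A_2} \eq \prod_{i=2}^{k} \P{A_i}[A_2 \cap \dots \cap A_{i-1}].
\]
Next I would observe that the event $A_i$ is a function only of the Bernoulli r.v.'s $\{X^{j_i}_{j_\ell} : \ell < i\}$ — all of whose first superscript is $j_i$ — whereas the conditioning event $A_2 \cap \dots \cap A_{i-1}$ is a function only of the r.v.'s $\{X^{j_m}_{j_\ell} : \ell < m \le i-1\}$, all of whose first superscript is one of $j_2,\dots,j_{i-1}$, each strictly less than $j_i$. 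By the construction in \mythref{def:markov-graph} (the Markov chain $\{X^{j}_1,\dots,X^{j}_{j-1}\}$ associated to vertex $v_j$ depends only on whether the preceding edge to $v_j$ is present, and is "independent of all other edges"), the family of r.v.'s indexed by first superscript $j_i$ is independent of the family of r.v.'s indexed by first superscripts $j_2,\dots,j_{i-1}$. Hence $A_i$ is independent of $A_2 \cap \dots \cap A_{i-1}$, so the conditional probability collapses to $\P{A_i} = \P{v_{j_i}\text{ is disconnected from }\{v_{j_1},\dots,v_{j_{i-1}}\}}$, and the product formula follows.

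The one point that needs care — and is the only real obstacle — is making the independence-across-vertices claim precise, since \mythref{def:markov-graph} only states pairwise/consecutive conditional dependencies rather than a global joint law. The clean way is to note that the collection of Bernoulli r.v.'s $\{X^{i}_{j} : 1\le j < i \le n\}$ is built as $n-1$ separate (internally Markovian) sequences, one per vertex $v_i$ for $i \ge 2$, and these sequences are mutually independent by construction; then invoke that a $\sigma$-algebra generated by one sequence is independent of the join of the $\sigma$-algebras generated by the others, which is exactly what is needed above. This is essentially the same mutual-independence-across-vertices principle already recorded in \mythref{iid}, just applied to whole blocks of variables rather than single ones. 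With that in hand, nothing else in the argument is more than bookkeeping with the chain rule.
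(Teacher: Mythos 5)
Your argument is correct and uses the same key idea as the paper's proof: partitioning the relevant edge Bernoulli variables by their upper index (target vertex) into mutually independent blocks, so that the joint probability of disconnection factors across vertices. The paper factors the joint probability directly from block independence whereas you pass through the chain rule and then collapse each conditional probability to a marginal via conditional independence; these are equivalent routes, and your explicit remark about needing the full "mutual independence across target vertices" (rather than only the pairwise statement in \mythref{iid}) is a fair observation about what the argument really uses.
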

\begin{proof}
Wlog, assume the vertices in $S$ are sorted such that $j_1<j_2<\cdots <j_k$. %We know from \Cref{AddingNewVertex} the probability for a vertex to be disconnected from a subset. %Consider $A_k$ is constructed from an empty set by adding vertices $v_{j_1},v_{j_2},\ldots, v_{j_k}$ one by one. 
Since a subset is independent if and only if it is pairwise disconnected, it is clear that the desired probability is equal to the joint probability $\P{X_{j_i}^{j_r}= 0 \colon r > i}$. Partition these edge variables based on their upper index into disjoint sets $B_{j_1}, B_{j_2},\ldots, B_{j_k}$ where $B_{j_r} = \{X_{j_1}^{j_r}, X_{j_2}^{j_r}, \ldots, X_{j_{r-1}}^{j_r}\}$. Note that $B_{j_1} = \emptyset$. From \Cref{iid,markovchain}, two different edge variables are dependent if and only if they have the same superscript and their subscripts are two consecutive numbers. Therefore, all variables inside $B_{j_r}$ are independent from these in $B_{j_l}$ with $l\neq r$ because their upper indices $j_r$ and $j_l$ are not the same. This independence implies that the probability can be written as the product (over $i=2,\dots,k$) of the probability that all variables in $B_{j_{i}}$ are equal to 0. The event that all variables in $B_{j_i}$ equal to 0 is equivalent to saying that $v_{j_i}$ is disconnected from $\{v_{j_1}, \ldots, v_{j_{i-1}}\}$.
\end{proof}

%For a subset $A$ of vertices, a vertex $v\notin A$ is disconnected from $A$ if there is no edge between $v$ and any vertex in $A$. Thus, when $A$ is an independent set, $A\cup \{v\}$ is independent if and only if $v$ is disconnected from $A$. Using the bounds from the previous lemmata for edge probabilities we now obtain bounds on the probability for a vertex to be disconnected from a subset of vertices. 

Now we use bounds on edge probabilities to bound the probability of disconnection for a vertex from a subset. Our bounds are independent from the indices of vertices in the subset and are solely determined by the size of the subset.

\begin{lemma}	\label{AddingNewVertex}
For any subset $A \subseteq \{v_1,\ldots, v_k\}$ of size $m$, for all $i\ge k$, 
	\[
	(1-p)\prod_{j=1}^{m-1} (1-\pp^{i}_j) \:\le\: \P{v_i \text{ is disconnected from }A} \:\le\:  \prod_{j=i-m}^{i-1} (\rr  \pp^{i}_j) .
	\]
\end{lemma}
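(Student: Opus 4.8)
The plan is to exploit the Markov chain structure established in \mythref{markovchain} together with the two-sided conditional-probability estimates of \mythref{ineqOfCondProb}. Sort the vertices of $A$ as $v_{a_1},\dots,v_{a_m}$ with $a_1<a_2<\cdots<a_m$; since $A\subseteq\{v_1,\dots,v_k\}$ and $i\ge k$, we may assume $a_m\le i-1$ (the degenerate case $v_i\in A$ is handled separately). The event that $v_i$ is disconnected from $A$ is exactly $\{X^i_{a_1}=0,\dots,X^i_{a_m}=0\}$. Since $\{X^i_1,\dots,X^i_{i-1}\}$ is a nonhomogeneous Markov chain and any subsequence of a Markov chain is again a Markov chain, conditioning on $X^i_{a_{t-1}}$ screens off all earlier variables in the subsequence, so
\[
\P{v_i \text{ is disconnected from }A} = \P{X^i_{a_1}=0}\prod_{t=2}^{m}\P{X^i_{a_t}=0}[X^i_{a_{t-1}}=0].
\]

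I would then bound each factor. The leading factor equals $1-\pp^i_{a_1}$, and because $\{\pp^i_j\}_j$ is decreasing in $j$ by \mythref{recur} with $\pp^i_1=p$, it satisfies $1-p\le 1-\pp^i_{a_1}\le 1$. For $t\ge 2$, \mythref{ineqOfCondProb} applies (the conditioning index $a_{t-1}$ is smaller than $a_t$, which is smaller than $i$) and yields $1-\pp^i_{a_t-1}\le \P{X^i_{a_t}=0}[X^i_{a_{t-1}}=0]\le \rr\,\pp^i_{a_t-1}$. For the lower bound, multiply the lower estimates: since $a_t\ge t$ we have $a_t-1\ge t-1$, hence $\pp^i_{a_t-1}\le\pp^i_{t-1}$ by monotonicity, so $\prod_{t=2}^m(1-\pp^i_{a_t-1})\ge\prod_{t=2}^m(1-\pp^i_{t-1})=\prod_{j=1}^{m-1}(1-\pp^i_j)$, and combining with $1-\pp^i_{a_1}\ge 1-p$ gives the claimed lower bound.

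For the upper bound, multiply the upper estimates and discard the leading factor $1-\pp^i_{a_1}\le 1$, leaving $\prod_{t=2}^{m}\rr\,\pp^i_{a_t-1}$. Using $a_m\le i-1$ and the strict increase of the $a_t$, one gets $a_t\le i-m+t-1$, and then monotonicity of $\{\pp^i_j\}_j$ together with the fact that every factor $\rr\,\pp^i_j$ lies in $(0,1)$ should let one compare this product term by term with $\prod_{j=i-m}^{i-1}\rr\,\pp^i_j$. The main obstacle is precisely this last comparison: one must pair the index set $\{a_2-1,\dots,a_m-1\}$ with a subset of the window $\{i-m,\dots,i-1\}$ so that every monotonicity replacement moves the product in the favorable direction, and account correctly for the leftover window index, which contributes only an extra factor in $(0,1)$. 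The subsequence-Markov reduction used in the first step also deserves a one-line justification. Everything else is routine given \mythref{markovchain}, \mythref{recur}, and \mythref{ineqOfCondProb}.
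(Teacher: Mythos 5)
Your decomposition, your use of \mythref{ineqOfCondProb} and \mythref{recur}, and your lower-bound derivation coincide with the paper's proof. The one thing you add is worth having: you make explicit that telescoping the conditionals over the subsequence $X^i_{a_1},\dots,X^i_{a_m}$ relies on a subsequence of a Markov chain being itself a Markov chain, which the paper uses tacitly.

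The obstacle you flag in the upper bound, however, is a genuine gap, and the route you sketch cannot close it. Having discarded $1-\pp^i_{a_1}\le 1$, you hold an $(m-1)$-term product which you want to dominate by an $m$-term product over the window $\{i-m,\dots,i-1\}$. Every factor of the target lies in $(0,1)$, so the ``leftover'' window index only shrinks the target, making the inequality harder rather than easier, and no pairing of $\{a_2-1,\dots,a_m-1\}$ with a subset of the window can move every replacement in the favorable direction: for small $t$ one only knows $a_t-1\ge t-1$, which can sit far below $i-m$. The repair is to keep the leading factor rather than discard it: $a_1<a_2<\cdots<a_m\le i-1$ forces $a_1\le i-m$, so monotonicity of $\{\pp^i_j\}_j$ gives $1-\pp^i_{a_1}\le 1-\pp^i_{i-m}$, and it is this quantity, not the coarse bound $\le 1$, that supplies the missing $j=i-m$ slot of the window product. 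You should also check the sign of the replacements you propose: $a_t-1\le i-m+t-2$ and $\pp$ decreasing give $\pp^i_{a_t-1}\ge\pp^i_{i-m+t-2}$, which pushes the factor $\rr\pp^i_{a_t-1}$ \emph{up}, not down. The comparison only runs in the stated direction if the conditional upper bound of \mythref{ineqOfCondProb} is read as $1-\r\pp^i_{j-1}$ (which is also what is needed for the second row of the transition matrix in \mythref{markovchain} to sum to one); with that reading, $\pp^i_{a_t-1}\ge\pp^i_{i-m+t-2}$ correctly yields $1-\r\pp^i_{a_t-1}\le 1-\r\pp^i_{i-m+t-2}$ and, together with the retained leading factor, the window product emerges.
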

\begin{proof}
Fix $i \ge k$ and let $D_i(m)$ denote the event that $v_i$ is disconnected from $A$. %The sequence $\{X^t_1, \dots, X^{t}_{t-1}\}$ is the first $t-1$ items in a Markov process $\{\X_i\}_{i\ge1}$. 
We argue the lower bound first.
		\[
			\P{D_i(m)} = \P{X^i_{j_1}=0, \ldots, X^i_{j_m}=0}=\P{X^i_{j_1}=0}\prod_{k=2}^{m}\P{X^i_{j_{k}}=0}[X^i_{j_{k-1}}=0] .
		\]
By \Cref{ineqOfCondProb}, $\P{X^i_{j_{k}}=0}[X^i_{j_{k-1}}=0]$ is bounded below by $1-\pp_{j_{k}-1}$, where we suppress the superscript $i$ on probabilities for ease of readability. Therefore,
		\[
			\P{D_i(m)}
			\ge   \P{X^i_{j_1}=0}\prod_{k=2}^m (1-\pp_{j_{k}-1})
			= (1-\pp_{j_1})\prod_{k=2}^m  (1-\pp_{j_{k}-1})
		\]
We know from \Cref{edgeprob} that the sequence $\{\pp_{j}\}$ is decreasing, and hence the sequence $\{1 - \pp_{j}\}$ is increasing. Immediately, we have $1-\pp_{j_1}\ge 1-p$ since $p=\pp_1$ is the largest value among all $\pp_j$. Note that $\prod_{k=2}^m  (1-\pp_{j_{k-1}})$ contains $m-1$ distinct values $\pp_{j_1-1}, \pp_{j_2-1},\ldots,\pp_{j_{m}-1}$ where $j_{m}> \cdots > j_1\ge 1$. Clearly, $j_{k}\ge k$ for all $1\le k\le m-1$. Therefore, $\pp_{j_{k}-1} \le \pp_{k-1}$ for all $i$ and then $\prod_{k=2}^m  (1-\pp_{j_{k}-1}) \ge \prod_{j=1}^{m-1}  (1-\pp_{j})$. Consequently,
		$(1-\pp_{j_1})\prod_{k=2}^m  (1-\pp_{j_{k-1}})
		\ge (1-p)\prod_{j=1}^{m-1} (1-\pp_j)$ which is our desired lower bound.
		
The upper bound can be obtained similarly.
		\[
		\begin{aligned}
			\P{D_i(m)} \eq \P{X^i_{j_1}=0, \ldots, X^i_{j_m}=0} 
			&\eq \P{X^i_{j_1}=0}\prod_{k=2}^{m}\P{X^i_{j_{k}}=0}[ X^i_{j_{k-1}}=0]\\
			&\le \P{X^i_{j_1}=0} \prod_{k=2}^m(\rr \pp_{j_{k}-1})\\
			&\le \prod_{j=i-m}^{i-1}(\rr  \pp_j) .
		\end{aligned}
		\]
The first inequality is obtained using \Cref{ineqOfCondProb}. The last inequality is because $\P{X^i_{j_1}=0} \le 1$ and we claim that the product term is upper bounded by $\prod_{j=i-m}^{i-1}(\rr  \pp_j)$. To argue this claim, we use the assumption that $A$ only contains vertices with index less than $i$. Hence, $j_{1} < j_{2} <\cdots<j_{m} \le i-1$ and $j_{k}\le i-1-(m-k)$ for all $1\le k\le m-1$, which implies that $\pp_{j_{k}-1}\ge \pp_{i-2-m+k}\ge \pp_{i-1-m+k}$. Therefore, $\prod_{k=2}^m(\rr \pp_{j_k-1})\le \prod_{j=i-m}^{i-1}(\rr  \pp_j)$. 		
		%A more precious result should be $\prod_{k=2}^m(\rr \pp_{j_k-1})\le \prod_{j=i-m-1}^{i-2}(\rr  \pp_j)$.
\end{proof}

%\subsection{Bounding the probability for a subset to be independent}	
%
%\Cref{AddingNewVertex} bounded the probability for a single vertex to be disconnected from a subset of vertices. We use this 

Putting together the previous two lemmas leads to the following upper bound for an independent set.

\begin{proposition}	\label{Prob-Ak-is-Indep}
The probability $P_{k}$ that a subset of $k$ vertices in $\G$ is an independent set satisfies the following bound when $n-k+1\ge \left( 1/\tau -1 \right)^2$, for $\tau := f_{1}\left(\rr p\right) = \rr p \left(1 - \rr p \right)$,
%\akshayq{what is } $\pp_{n-i}$ ?
%\[
%P_{k} \:\le\: \prod_{i=1}^{k-1} \left(1- \r \pp_{n-i}\right)^i \:\le\: \prod_{i=1}^{k-1} \left(1- \frac{\r}{\rr }\psi_{p,\r}(n-i)\right)^i .
%\]
%Furthermore, for $n-k+1\ge \left( 1/\gamma -1 \right)^2$, the upper bound can be refined as
\[
P_{k} \:\le\: \prod_{i=1}^{k-1} \left(1 \,-\, \frac{\r}{\rr }\frac{1}{\left(\sqrt{n-i}+1\right)^2}\right)^i .
\]
\end{proposition}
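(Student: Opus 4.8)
\noindent The plan is to assemble the three structural lemmas just proved: express $P_k$ as a product of ``disconnect one new vertex from the earlier ones'' probabilities via \mythref{probproduct}, bound each such factor by a product of $\rr\pp_j$'s through \mythref{AddingNewVertex}, rearrange the resulting double product, and estimate each factor using \mythref{recur}. Recall from \mythref{iid} that $\pp^i_j$ does not depend on $i$, so I write it as $\pp_j$, and recall from \mythref{recur} that $\{\pp_j\}$ is decreasing.

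First I would reduce to the ``latest'' $k$-set. The edge $(v_a,v_b)$ with $a<b$ is present with probability $\pp_a$, depending only on the smaller index, so pushing every index of $S$ as far to the right as possible only lowers all edge probabilities and hence raises $\P{S\text{ is independent}}$; thus it is enough to prove the bound for $S=\{v_{n-k+1},\dots,v_n\}$, i.e.\ $a_t=n-k+t$. For this $S$, \mythref{probproduct} gives $P_k=\prod_{t=2}^{k}\P{v_{a_t}\text{ is disconnected from }\{v_{a_1},\dots,v_{a_{t-1}}\}}$, and \mythref{AddingNewVertex}, applied with the size-$(t-1)$ set $\{v_{a_1},\dots,v_{a_{t-1}}\}$ and the strictly later vertex $v_{a_t}$, bounds the $t$-th factor by $\prod_{j=a_t-(t-1)}^{a_t-1}(\rr\pp_j)=\prod_{j=n-k+1}^{n-k+t-1}(\rr\pp_j)$. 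Multiplying over $t$ and counting how often the factor $\rr\pp_{n-i}$ occurs --- namely for $t=k+1-i,\dots,k$, i.e.\ with multiplicity $i$ --- yields
\[
P_k \;\le\; \prod_{i=1}^{k-1}\bigl(\rr\pp_{n-i}\bigr)^{i}.
\]

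It then remains to show, for each $i\le k-1$, the single-index inequality $\rr\pp_{n-i}\le 1-\frac{\r}{\rr}\,\frac{1}{(\sqrt{n-i}+1)^2}$; raising it to the power $i$ and taking the product finishes the proof. I would use only the cheap bound $\rr\pp_{m}<1/m$ from \mythref{recur}, valid because $m:=n-i\ge n-k+1\ge(1/\tau-1)^2\ge 2$, which reduces the claim to $\frac1m\le 1-\frac{\r}{\rr}\,\frac1{(\sqrt m+1)^2}$, equivalently $\frac{\r}{\rr}\le h(m):=\frac{(m-1)(\sqrt m+1)^2}{m}$. Since $h$ is non-decreasing on $[1,\infty)$ and $m\ge(1/\tau-1)^2$, it suffices to check this at $m=(1/\tau-1)^2$. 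Writing $u:=1/\tau$ (so $u\ge 4$ because $\tau=f_1(\rr p)\le\frac14$ by \mythref{obsfa}), the bound becomes $\frac{\r}{\rr}\le\frac{u^3(u-2)}{(u-1)^2}$, which follows from $\frac{\r}{\rr}=\frac1\rr-1\le u-1$ (valid since $\tau=\rr p(1-\rr p)\le\rr$) together with the elementary inequality $(u-1)^3\le u^3(u-2)$ for $u\ge 4$. This is exactly where the hypothesis $n-k+1\ge(1/\tau-1)^2$ enters.

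The main obstacle is the first step --- making the reduction to the latest $k$-set rigorous. Feeding \mythref{probproduct} and \mythref{AddingNewVertex} naively into an arbitrary $S$ only gives $P_k\le\prod_{t=2}^{k}\prod_{j=a_t-(t-1)}^{a_t-1}(\rr\pp_j)$, whose factors are \emph{larger} than for the latest set (smaller indices make $\rr\pp_j$ bigger), so monotonicity of this \emph{bound} runs the wrong way; one must instead argue monotonicity of the actual probability, e.g.\ by a coupling showing that shifting one vertex of $S$ to a larger unoccupied index can only make a disconnection more likely. Alternatively, since all that is needed in \textsection\ref{sec:upper} is an upper bound on $\sum_{|S|=k}\P{S\text{ is independent}}\le\binom{n}{k}\max_{S}\P{S\text{ is independent}}$, one can prove by induction on $k$ that the latest $k$-set maximises this probability, using the sharper estimate $\rr\pp_j=f_1(\rr\pp_{j-1})\le\frac14$ for $j\ge 2$ (by \mythref{obsfa}) to handle the low-index factors. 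Once this reduction is in place, the remaining computation is routine.
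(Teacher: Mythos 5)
Your proof literalises a misprint that the paper carries through several lemmas, and this both creates the ``monotonicity obstacle'' you noticed and leads you to an intermediate inequality that is not actually true. The tell is in \mythref{markovchain}: the second row of $\mathbf P^i_k$ is written $[\rr\pp^i_k,\ \r\pp^i_k]$, which sums to $\pp^i_k$ rather than $1$; the $(2,1)$ entry must be $1-\r\pp^i_k$. This typo propagates: the upper bound in \mythref{ineqOfCondProb} should read $1-\r\pp^i_{j-1}$ (as stated, $1-\pp_{j-1}\le\rr\pp_{j-1}$ fails whenever $\pp_{j-1}<1/(1+\rr)$, so the lemma as printed is not even internally consistent), and hence the right-hand side of \mythref{AddingNewVertex} should be $\prod_{j=i-m}^{i-1}\bigl(1-\r\pp^i_j\bigr)$. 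Indeed, the paper's own proof of the proposition silently switches: it writes $\prod\prod(\rr\pp_{n-t})$ and then equates it to $\prod(1-\r\pp_{n-i})^i$, which only makes sense if $\rr\pp$ was a typo for $1-\r\pp$ all along.

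Once the bound is $1-\r\pp_j$ instead of $\rr\pp_j$, your ``main obstacle'' disappears: $1-\r\pp_j$ is \emph{increasing} in $j$ (because $\pp_j$ is decreasing, \mythref{recur}), so from $j_i\le n-(k-i)$ one gets $1-\r\pp_{j_i-t}\le 1-\r\pp_{n-(k-i)-t}$ directly, and the arbitrary-$S$ product is bounded by the latest-set product without any coupling. This is exactly the paper's one-line move. The fact that $\rr\pp_j$ runs the wrong way under this substitution was the signal that something was off, and rather than reach for a coupling, the cleaner diagnosis is that the printed lemma cannot be the intended one. The coupling you sketch is also not filled in, and as you note yourself is not obviously reconcilable with the fact that the \emph{bound} for the latest set is the smallest among all $k$-sets if one literally uses $\rr\pp$.

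Finally, with the corrected factor $1-\r\pp_{n-i}$, the last step is trivial: the lower bound $\pp_{n-i}\ge\frac{1}{\rr}\cdot\frac{1}{(\sqrt{n-i}+1)^2}$ from \mythref{recur} (valid exactly when $n-i\ge(1/\tau-1)^2$, hence the hypothesis $n-k+1\ge(1/\tau-1)^2$) gives $1-\r\pp_{n-i}\le 1-\frac{\r}{\rr}\frac{1}{(\sqrt{n-i}+1)^2}$ immediately, with no further algebra. Your elementary verification that $\rr\pp_m\le 1-\frac{\r}{\rr(\sqrt m+1)^2}$, via $u=1/\tau$, $\r/\rr\le u-1$, and $(u-1)^3\le u^3(u-2)$, is arithmetically correct, but it is solving a different and unneeded problem: it amounts to showing a small number is less than a number near $1$, whereas the proposition is really recording the comparison between $1-\r\pp_{n-i}$ and the explicit $1-\frac{\r}{\rr(\sqrt{n-i}+1)^2}$.
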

\begin{proof}
We use the expression from \Cref{probproduct}. For each term in the product, applying the upper bound from \Cref{AddingNewVertex} relaxes it as follows,
\begin{equation}
	\label{ineq:indep-set-ak-vji}
\P{v_{j_i} \text{ is disconnected from }\{v_{j_1}, \ldots, v_{j_{i-1}}\}} \:\le\:  \prod_{k=j_i-(i-1)}^{j_i-1} (\rr \pp_k)	,
\end{equation}
where for the sake of brevity, we have suppressed the superscript $j_{i}$ on the probabilities. Since $j_1<j_2<\cdots<j_k\le n$, we have $j_i \le n-(k-i)$. \Cref{edgeprob} tells us that $\{\pp_k\}$ is a decreasing sequence. Therefore, $\pp_{j_{i}-t} \ge \pp_{n-(k-i)-t}$ for all $t\in \{1,2,\ldots,i-1\}$ and now we can further relax \eqref{ineq:indep-set-ak-vji} as
\[
\P{v_{j_i} \text{ is disconnected from }\{v_{j_1}, \ldots, v_{j_{i-1}}\}}\le \prod_{k=j_i-(i-1)}^{j_i-1} (\rr \pp_k)	\le \prod_{t=k-(i-1)}^{k-1} (\rr \pp_{n-t})	
\]
Applying this inequality to the expression from \Cref{probproduct},
\[
P_{k} \:\le\: \prod_{i=2}^k  \prod_{t=k-i+1}^{k-1} (\rr \pp_{n-t}) \eq \prod_{i=1}^{k-1} \left(1- \r \pp_{n-i}\right)^i .
\]
We use the lower bound $\pp_{n-i} \ge \psi_{p,\r}(n-i)/\rr $ obtained from  \Cref{edgeprob} to eliminate $\pp_{n-i}$.
\[
\P{subset \text{ is independent} } 
\le
\prod_{i=1}^{k-1} \left(1- \r q_{n-i}\right)^i
\le 
\prod_{i=1}^{k-1} \left(1- \frac{\r}{\rr }\psi_{p,\r}(n-i)\right)^i
\]
Since $\psi_{p,\r}(n-i):=\ell_{n-i}(\rr p)$, and $\ell_n(x) = 1/(\sqrt{n}+1)^2$ if $n\ge 1/(f_1(x)+1)^2$,  we obtain an explicit upper bound for sufficiently large $n-k$.
\end{proof}

\begin{remark}
It is natural to ask about a lower bound on the probability $P_{k}$ from \Cref{Prob-Ak-is-Indep}. Using the expression from \Cref{probproduct}, the lower bound from \Cref{AddingNewVertex}, and the upper bound on each $p^{i}_{j}$ from \Cref{edgeprob} yields a lower bound of a higher power of $1-p$, and so we do not obtain a very good estimate that goes to 1 for high values of $n$.
%\[
%P_{k} \ge (1-p)^{k-1}\prod_{i=2}^{k}
%\]
\end{remark}

\section{Greedy algorithm}	\label{sec:greedy}

Now we analyse the performance of a greedy algorithm for approximating (lower bounding) $\alpha(\G)$. %This algorithm was introduced near the statement of \Cref{greedy} in \Cref{sec:results}. 
Our algorithm sorts vertices in the order $v_{1}$ to $v_{n}$, and starting with an empty set iteratively builds an independent set in $n$ iterations where at iteration $i$ the vertex $v_{i}$ is added to the independent set if this addition preserves the new set to also be independent. The definition of our random graph is such that one can intuitively see that the vertex degrees are in the descending order from $v_{1}$ to $v_{n}$ w.h.p. Hence, our simple greedy algorithm can be regarded as the opposite of what one would use as a greedy algorithm for approximating $\alpha(\G)$ wherein the vertices would be sorted in increasing order of their degrees ($v_{n}$ to $v_{1}$ for $\G$). Thus, lower bounding the performance of our algorithm tells us (intuitively speaking) the worst we can do in any greedy algorithm for approximating $\alpha(\G)$. We prove here the asymptotic lower bound guarantee that was stated earlier in \Cref{sec:results}. %Let $\alpha_{G}(\G)$ be the output of our greedy algorithm.

\begin{proof}[\textbf{Proof of \Cref{greedy}}]
We adopt a similar strategy as \citet{grimmett1975colouring} for $G_{n,p}$. Let $\Stab$ be the independent set created during the iterations of the greedy algorithm. We analyze the number of iterations needed to increase the size of $\Stab$ by 1. Denote $R_{m+1}$ to be the number of iterations needed to add a new vertex into $\Stab$ since when $|\Stab|=m$. Observe that $R_{m+1}$ follows a geometric distribution whose parameter is $\P{v_i \text{ is disconneted from }\Stab}$. Suppose $\Stab = \{v_{j_1},\ldots, v_{j_m}\}$, then
\[
\E{R_{m+1}} = \frac{1}{\P{ v_i \text{ is disconnected from }\Stab}}, \quad \forall i>j_m
\]
We need to estimate the probability of $v_i$ can be added into $\Stab$ i.e. $v_i$ is disconnected from $\Stab$. By \Cref{AddingNewVertex},
\[
\frac{1}{\prod_{j=i-m}^{i-1} (\rr \pp_j)} \le \E{R_{m+1}} \le  \frac{1}{(1-p)\prod_{j=1}^{m-1} (1-\pp_j)}
\]
Note that there exists a positive integer $w$ such that $\pp_i\le w/i$ for all $i$. More precisely, we can take $\tau = \lceil 1/\rr  \rceil$ by \Cref{edgeprob}. Denote $\gamma = (1-p)\prod_{j=1}^\tau (1-\pp_j)$. Then 
we can further relax the upper bound above when $m > \tau$.
\[
\E{R_{m+1}} \le \frac{1}{\gamma\prod_{j=\tau+1}^m (1-\frac{\tau}{j})} = \frac{1}{\gamma} \frac{m!}{(m-\tau)!\tau!} =\frac{1}{\gamma}\binom{m}{\tau}\le \frac{1}{\gamma}\left(\frac{me}{\tau}\right)^\tau .
\]
If $k$ is large enough, then
\[
\E{\sum_{m=1}^{k}R_m} \le \sum_{m=1}^\tau \E{R_m}+\sum_{m=\tau+1}^k \frac{1}{\gamma}\left(\frac{me}{\tau}\right)^\tau = O\left( k^{\tau+1} \right)
\]

Note that $\sum_{m=1}^\tau \E{R_m}$ is a constant and $\sum_{m=\tau+1}^k \frac{1}{\gamma}\left(\frac{me}{\tau}\right)^\tau$ is bounded above by $\frac{k}{\gamma}\left(\frac{ke}{\tau}\right)^\tau = O\left( k^{\tau+1} \right)$. For the variance, since $R_m$ follows geometric distribution,
\[
\begin{aligned}
	\operatorname{Var}{\sum_{m=1}^k R_m} &\le \sum_{m=1}^k \E{R_m^2} \le 2\sum_{m=1}^k \E{R_m}^2\\
	&\le\sum_{m=1}^w \E{R_m}+\sum_{m=d+1}^k\frac{1}{\gamma^2}\left(\frac{m}{\tau}\right)^{2\tau}\\ 
	&=  O\left( k^{2\tau+1} \right)
\end{aligned}
\]
For convenience of notation, we denote $\sum_{m=1}^k R_m$ as $Z_k$.
Let 
\[
k: =k(n) := \left(\left(\frac{\tau}{e}\right)^{\tau} \cdot \frac{\gamma}{3}\cdot n\right)^{\frac{1}{\tau+1}} = O\left(n^{\frac{1}{\tau+1}}\right)
\] 
Then, we have $\E{Z_k} = O(n)$ and $\operatorname{Var}{Z_k} =O\left(n^{\frac{2\tau+1}{w+1}} \right)$. In particular, $\E{Z_k} < \frac{n}{2}$ for sufficiently large $n$, which implies that
\[
\P{Z_k\ge n} \le \P{Z_k\ge\E{Z_k}+ \frac{n}{2} }\le \P{\left|Z_k-\E{Z_k}\right|\ge \frac{n}{2} }.
\]
Therefore, when $n\rightarrow \infty$, we get that
\[
\P{Z_k\ge n} \le \P{\left|Z_k-\E{Z_k}\right|\ge \frac{n}{2} }\le \frac{4\operatorname{Var}{Z_k}}{n^2} = O\left( n^{-\frac{1}{\tau+1}} \right).
\]
Hence, $Z_k$, the number of iterations needed to obtain a independent set of size $k$, is at most $n$ w.h.p. when $n\rightarrow\infty$ and $k=n^{1/(\tau+1)}$. Conversely, if our algorithm has $n$ iterations, then the size of independent set is at least $k$ w.h.p., i.e. the output of our greedy method is $\Omega\left(n^{1/(\tau+1)}\right)$. %and obviously  $\alpha(\G)=\Omega\left(n^{1/(w+1)}\right)$.
\end{proof}

\changes{
\Cref{greedy} gives a weaker lower bound on $\alpha(\G)$ than that from \Cref{lower}. 
%because we know from the prime number theorem that $\pi(n)$ is asymptotically equal to $n/\log{n}$. 
Since the construction of our random graph is such that one can intuitively see that the vertex degrees are in the descending order from $v_{1}$ to $v_{n}$ w.h.p., the greedy algorithm we consider gives a worst-case performance bound on the greedy algorithm that one typically uses to approximate $\alpha(\G)$ by sorting vertices from smallest to largest degree (in our case that would be $v_{n}$ to $v_{1}$). This is noted formally in the next result which compares the performance ratios of the two methods. The performance ratio of an algorithm $A$ that outputs an independent set of size $\alpha_{A}(G)$ for any graph $G$ (and hence provides a lower bound on $\alpha(G)$) is denoted by $\rho_{A}(n)$ and defined as the maximum ratio $\alpha(G)/\alpha_{A}(G)$ over all $n$-vertex graphs $G$. Let $\greedy$ denote our greedy algorithm which iteratively adds vertices in the order $v_{1}$ to $v_{n}$ and $\greedy[min]$ denote the algorithm which picks the smallest degree vertex at each iteration.

\begin{corollary}	\label{greedyratio}
W.h.p., $\rho_{\greedy} = O(n^{\frac{1}{1+\rr}})$ and $\rho_{\greedy[min]} \,\revise{\le}\, 1 + \left(\frac{1}{\rr}+\epsilon \right)\log{n}$ for $\epsilon > 0$.
\end{corollary}
\begin{proof}
The first bound is directly from combining \Cref{greedy} and \Cref{upper}. The second bound uses the general result of \citet[Corollary~4][]{Halldorsson1997} which says that $\rho_{\greedy[min]} \le (d(G)+2)/2$, and the upper bound on $d(\G)$ from \Cref{avgdeg}.
\end{proof}
}
 
%As a comparison, in general graph, all greedy algorithms are well-known to have approximation ratio at least $n/(\Delta(G)+1)$ where $\Delta(G)$ is the maximum vertex degree in the graph. \akshay{citation needed}

% \akshay{Can we concur anything about max degree?}

%------------------------------------------------

\section{Upper Bound}	\label{sec:upper}

Our goal here is to prove the upper bound on $\alpha(\G)$ \revise{in} \Cref{upper}. We use the first moment argument, which has also been used for the \revise[2]{Erd\H{o}s-R\'enyi-Gilbert} random graph model \cite{matula1976largest,frieze1990independence,Frieze2016}. More precisely, let $H_{k,n}$ be the number of independent sets of order $k$ in $\G$. We want to find the minimum $k$ such that $\P{H_{k,n}>0}\to 0$ as $n\to\infty$.  The first step towards this is to estimate the probability that a subset of vertices forms an independent set.

\subsection{Proof of Theorem~\ref{upper}}

%To achieve this, we need the following technical result that provides an exponential bound on a product term that is closely related to the bound on the probability of an independent set from \Cref{Prob-Ak-is-Indep}.

%\begin{lemma}	\label{subset-bound}
%When $k=c\, n$ for some constant $c\in (0,1)$, \akshayq{What are $a,h$ ? }
%	\[
%	 \prod_{i=1}^{k} \left(1- \frac{h}{(\sqrt{\revise{n}-i}+1)^2}\right)^i \le \Exp{ h \,  (\log(1-c)+c)\, n +O(n)} .
%	\]
%\end{lemma}

The function
\[
\revise{\varphi_{\r}}(c) \define c\, (1-\log c) + \frac{\r}{\rr } \, \left(c+\log(1-c)\right) \eq \frac{c}{\rr }  - c \log{c} + \frac{\r}{\rr }\log{(1-c)}.
\]
appears in our analysis and we will need to use negative values of this function over the positive part of its domain, which lies within $(0,1)$. For this purpose we claim that there is a unique positive root beyond which $\revise{\varphi_{\r}}$ is negative-valued, and although it is difficult to characterise this root explicitly, we provide an upper bound on it in terms of $\r$.

\begin{claim}	\label{rootf}
For every $\r \in (0,1)$ the function $\revise{\varphi_{\r}}$ has a unique positive root $c^{*}$ and $\revise{\varphi_{\r}}(c) <0$ for all $c\in (c^\ast, 1)$. Furthermore, $c^{*} < e^{-\r} + 0.1\r$.
\end{claim}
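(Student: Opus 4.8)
The plan is to first determine the shape of $f_{\r}$ on its domain $(0,1)$, which immediately yields the uniqueness of the root and the sign statement, and then to obtain the explicit upper bound by evaluating $f_{\r}$ at the candidate point $\bar c := e^{-\r}+\r/10$.

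\emph{Step 1: shape of $f_{\r}$.} I would differentiate twice. From $f_{\r}(c) = \frac{c}{\rr} - c\log c + \frac{\r}{\rr}\log(1-c)$ one gets
\[
f_{\r}'(c) = \frac{1}{\rr} - \log c - 1 - \frac{\r}{\rr(1-c)}, \qquad f_{\r}''(c) = -\frac{1}{c} - \frac{\r}{\rr(1-c)^2} < 0 \quad\text{on } (0,1),
\]
so $f_{\r}$ is strictly concave on $(0,1)$. Next I would record the boundary behaviour: $f_{\r}(c)\to 0$ as $c\to 0^{+}$ (since $c\log c\to 0$ and $c+\log(1-c)\to 0$), while $f_{\r}'(c)\to +\infty$ as $c\to 0^{+}$ (the $-\log c$ term dominates), and $f_{\r}(c)\to -\infty$ as $c\to 1^{-}$ (the term $\frac{\r}{\rr}\log(1-c)$ diverges). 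Since $f_{\r}'$ is continuous and strictly decreasing from $+\infty$ to $-\infty$, it has a unique zero $c_{0}\in(0,1)$; hence $f_{\r}$ strictly increases on $(0,c_{0})$ from the value $0$ (so it is positive there), attains its maximum at $c_{0}$, and then strictly decreases on $(c_{0},1)$ down to $-\infty$. It follows that $f_{\r}$ has exactly one root $c^{\ast}\in(c_{0},1)$, with $f_{\r}>0$ on $(0,c^{\ast})$ and $f_{\r}<0$ on $(c^{\ast},1)$. This settles the first two assertions.

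\emph{Step 2: the explicit bound.} Because $f_{\r}$ is negative precisely on $(c^{\ast},1)$, the inequality $c^{\ast}<\bar c$ is equivalent to the two facts $\bar c\in(0,1)$ and $f_{\r}(\bar c)<0$. The containment is routine: $\bar c>0$ is clear, and $e^{-\r}+\r/10<1$ for $\r\in(0,1)$ because $h(\r):=e^{-\r}+\r/10-1$ satisfies $h(0)=0$ and $h'(\r)=-e^{-\r}+1/10<0$ for $\r<\ln 10$. So the claim reduces to the one-variable inequality $f_{\r}(\bar c)<0$ for all $\r\in(0,1)$, which (after multiplying by $\rr>0$ and using $\rr+\r=1$) is equivalent to
\[
\bar c\,(1-\log\bar c) + \r\bigl(\bar c\log\bar c + \log(1-\bar c)\bigr) < 0, \qquad \bar c = e^{-\r}+\frac{\r}{10}.
\]

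\emph{Main obstacle.} I expect this last transcendental inequality in $\r$ to be the heart of the proof: there is no clean algebraic cancellation because $\bar c$ superposes an exponential and a linear term inside the logarithms, and crude bounds such as $1-\frac1x\le\log x\le x-1$ alone are not sharp enough near $\r=0$ (where $\bar c\to 1$ and the positive term $\bar c(1-\log\bar c)\to 1$), so the constant $1/10$ really matters. My plan would be a careful case analysis on $\r\in(0,1)$: on each subinterval replace $e^{-\r}$ by a truncated Taylor polynomial with explicit remainder, and $\log\bar c$, $\log(1-\bar c)$ by tailored polynomial/rational two-sided bounds, reducing the target to a polynomial inequality that can be checked directly, using monotonicity of the left-hand side in $\r$ to keep the number of cases small. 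An alternative is to use the defining relation $f_{\r}(c^{\ast})=0$, i.e. $\log\frac{1}{1-c^{\ast}} = \frac{c^{\ast}(1-\log c^{\ast})}{\r} + c^{\ast}\log c^{\ast}$, and bound the implicit solution $c^{\ast}$ from above, but this appears to require the same kind of estimates.
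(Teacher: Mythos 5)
Your Step 1 is correct and matches the paper's argument essentially verbatim: both compute $f_{\r}''<0$ to get concavity, use the boundary behaviour $f_{\r}(0^{+})=0$, $f_{\r}'(0^{+})=+\infty$, $f_{\r}(1^{-})=-\infty$, and conclude that $f_{\r}$ rises from $0$, peaks, then descends through a unique positive root $c^{\ast}$ with $f_{\r}<0$ on $(c^{\ast},1)$.

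Step 2, however, has a genuine gap, and you acknowledge it yourself. Your reduction to ``show $f_{\r}(\bar c)<0$ for all $\r\in(0,1)$ with $\bar c=e^{-\r}+\r/10$'' is correct (concavity plus $\bar c\in(0,1)$ makes $f_{\r}(\bar c)<0$ equivalent to $c^{\ast}<\bar c$), and the algebraic simplification after multiplying by $\rr$ checks out. But you then declare the resulting transcendental inequality to be the ``main obstacle'' and only outline a strategy (subdivide $\r\in(0,1)$, replace $e^{-\r}$ and the two logarithms by two-sided polynomial bounds, verify polynomial inequalities case by case), without actually carrying any of it out. A sketched plan for a case analysis, with the cases, the polynomial surrogates, and the verifications all unspecified, does not constitute a proof of the quantitative bound $c^{\ast}<e^{-\r}+0.1\r$, which is the substantive part of the claim.

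The paper closes this step with a cleaner, global argument that avoids case analysis altogether. It sets $g(\r):=f_{\r}(\bar c(\r))$, checks the endpoint values $g(0^{+})=0$ and $g(1)<0$, and then shows $g$ cannot vanish on $(0,1)$: the equation $g(\r)=0$ is rearranged into the form ``LHS$(\r)=$RHS$(\r)$'' where, using that $\bar c(\r)=e^{-\r}+\r/10$ is monotone decreasing on $(0,1)$, the left side is monotone increasing in $\r$, the right side is monotone decreasing in $\r$, and they agree at $\r=0$. Two functions moving strictly apart from a common starting value cannot meet again, so $g$ has no interior zero, and continuity together with $g(1)<0$ forces $g<0$ on all of $(0,1)$. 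If you want to complete your proposal, this monotonicity device is the missing idea; your Taylor-expansion-plus-casework plan could probably be made to work but would be far more laborious and you would still need to pin down explicit remainder bounds near $\r=0$, which is exactly the regime where you correctly note the estimates must be sharp.
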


%\Cref{recur} says the edge probability $X^{(n)}_i$ is approximately $1/n$ up to some scaler. In the standard ER model $G_{n,p}$ with $p=\Theta(1/n)$, we have $\alpha(G_{n,p}) =\Theta(n)$ \cite{Frieze1990}. It seems reasonable to assume $k:=c(\r)\cdot n$ where $c(\r)$ is a constant to be determined later, i.e. the size of $\alpha(\G)$ is linear in $n$. 

We argue an upper bound on $\alpha(\G)$ that is arbitrarily close to the root $c^{\ast}$ of the function $\revise{\varphi_{\r}}$. In particular, we prove that for every $\epsilon > 0$, w.h.p. $\alpha(\G) \le \left(c^{\ast}+\epsilon\right)\,n$. Since \Cref{rootf} tells us that $c^{\ast} < e^{-\r} + 0.1\r$, it follows that  $\alpha(\G) \le \left(e^{-\r} + 0.1\r\right)n$.

Let $H_{k,n}$ be the number of independent sets of order $k$ in $\G$. We want to find a minimum $k:=k(n)$ such that $\P{H_{k,n}>0}\to 0$ as $n\to\infty$.  Then, with such a $k(n)$, we have w.h.p. $\alpha(\G)\le k(n)$. Observe that $\E{H_{k,n}} = \sum_{i=1}^n i\P{H_{k,n}=i} \ge \sum_{i=1}^n \P{H_{k,n}=i} = \P{H_{k,n} > 0}$. It is sufficient to show that $\E{H_{k,n}}\to 0$ w.h.p. 

Observe that $\E{H_{k,n}}$ can be written as the sum of probabilities of all possible subsets of size $k$ to be independent set, 
\[
	\E{H_{k,n}} \eq  \sum_{\substack{A\subseteq \{v_1,v_2,\ldots, v_n\}\colon\\|A|=k}} \P{A \text{ is independent set}} %\:\le\: \binom{n}{k} \, \prod_{i=1}^{k-1} \left(1- \frac{\r}{\rr }\psi_{p,\r}(n-i)\right)^i,
\]
Using the upper bound from \Cref{Prob-Ak-is-Indep} for the probability in the summand, it follows that when $n-k$ is large enough, %\akshayq{What is $a$ ?}
\begin{equation}	\label{exp-bound}
\E{H_{k,n}} \: \le \: \binom{n}{k} \,  \prod_{i=1}^{k-1} \left(1-\frac{\r}{\rr } \frac{1}{(\sqrt{\revise{n}-i}+1)^2}\right)^i .
\end{equation}

Let $k$ be an upper bound on $\alpha(\G)$. Since $\G$ is a $n$-vertex graph, we can assume that $k = c\, n$ for some constant $c$ which is to be determined. We claim that 

\begin{claim}	\label{subset-bound}
We have
%\akshayq{What are $a,h$ ? }
\[
\prod_{i=1}^{k} \left(1- \frac{h}{(\sqrt{\revise{n}-i}+1)^2}\right)^i \le \Exp{ h \,  (\log(1-c)+c)\, n +O(n)} .
\]
\end{claim}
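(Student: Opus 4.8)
The plan is to take logarithms of the product and estimate the resulting sum, keeping careful track of which error terms are genuinely $O(n)$. Write $L := \log \prod_{i=1}^{k}\bigl(1 - \frac{h}{(\sqrt{a-i}+1)^2}\bigr)^i = \sum_{i=1}^{k} i \log\bigl(1 - \frac{h}{(\sqrt{a-i}+1)^2}\bigr)$, where $a=n$ and $k=cn$. The first move is to replace $(\sqrt{a-i}+1)^2$ by $a-i$ up to lower-order corrections: since $(\sqrt{a-i}+1)^2 = (a-i) + 2\sqrt{a-i} + 1$, the factor $\frac{h}{(\sqrt{a-i}+1)^2}$ differs from $\frac{h}{a-i}$ by something of order $\frac{1}{(a-i)^{3/2}}$, and I would check that after multiplying by $i \le cn$ and summing over $i \le cn$ this accumulated discrepancy is $O(n)$ (the worst terms are near $i = cn$, where $a - i = (1-c)n$ is still linear in $n$, so each correction is $O(n^{-3/2})$ and there are $O(n)$ of them weighted by $O(n)$, giving $O(n^{1/2}) = O(n)$). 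So it suffices to estimate $\sum_{i=1}^{cn} i \log\bigl(1 - \frac{h}{n-i}\bigr) + O(n)$.

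Next I would use $\log(1-x) = -x + O(x^2)$ for $x = h/(n-i) \in (0, h/((1-c)n)) \to 0$; again the $O(x^2)$ terms, weighted by $i \le cn$ and summed, contribute $O(n \cdot n \cdot n^{-2}) = O(1)$, hence are absorbed into $O(n)$. Thus $L = -h \sum_{i=1}^{cn} \frac{i}{n-i} + O(n)$. Now $\frac{i}{n-i} = \frac{n}{n-i} - 1$, so $\sum_{i=1}^{cn}\frac{i}{n-i} = n\sum_{i=1}^{cn}\frac{1}{n-i} - cn = n\sum_{j=n-cn}^{n-1}\frac{1}{j} - cn = n\bigl(H_{n-1} - H_{n-cn-1}\bigr) - cn$. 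By \thref{harmonic} (or just the Riemann-sum estimate $H_m = \log m + O(1)$), $H_{n-1} - H_{(1-c)n - 1} = \log n - \log((1-c)n) + O(1/n) = -\log(1-c) + O(1/n)$. Therefore $\sum_{i=1}^{cn}\frac{i}{n-i} = -n\log(1-c) - cn + O(1)$, and multiplying by $-h$ gives $L = h\bigl(\log(1-c) + c\bigr)n + O(n)$, which is exactly the claimed bound after exponentiating.

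The main obstacle, such as it is, is purely bookkeeping: one must be disciplined about the fact that the prefactor $i$ ranges up to $cn$, so that an error of size $\delta_i$ in the $i$-th logarithm contributes up to $cn \cdot \sum_i \delta_i$, and verify in each approximation step (the $\sqrt{a-i}+1$ versus $\sqrt{a-i}$ swap, the $\log(1-x)\approx -x$ swap, and the harmonic-number estimate) that this weighted sum really is $O(n)$ and not, say, $\Theta(n\log n)$. The one place to be slightly careful is that the estimate degrades as $i\to cn$ because $n-i$ shrinks, but since $c<1$ is a fixed constant bounded away from $1$, $n-i \ge (1-c)n$ throughout, so every denominator is linear in $n$ and all the error bounds go through uniformly. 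No convergence or probabilistic input is needed here — this is a deterministic asymptotic identity, and the only external facts invoked are the logarithmic growth of $H_n$ from \thref{harmonic} and elementary Taylor/Riemann-sum estimates.
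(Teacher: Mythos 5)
Your proof is correct, but it takes a noticeably different route from the paper's. The paper never passes to $\log(1-x)\approx -x + O(x^2)$; instead it invokes the one-sided inequality $1+x\le e^x$ once to get $\prod_{i}(1-h/(\sqrt{a-i}+1)^2)^i \le \exp\bigl(-h\sum_i i/(\sqrt{a-i}+1)^2\bigr)$ with no error term at all, then bounds the sum from below by the explicit integral $\int_0^{cn} x/(\sqrt{a-x}+1)^2\,dx$ (using monotonicity of the integrand), computes that antiderivative in closed form, and extracts the dominant terms $-cn$ and $2n\log\bigl(\tfrac{1+\sqrt{n}}{1+\sqrt{(1-c)n}}\bigr)$ by a Taylor expansion of $\log$. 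You, by contrast, take logarithms of the product, replace $(\sqrt{a-i}+1)^2$ by $a-i$ (tracking an $O(\sqrt n)$ accumulated error), Taylor-expand $\log(1-x)$ to first order (tracking an $O(1)$ error), and reduce the remaining sum via the partial-fraction identity $i/(n-i)=n/(n-i)-1$ to a difference of harmonic numbers handled by \thref{harmonic}. Your route is more elementary in that it needs no closed-form integral, only harmonic-number asymptotics, but it requires the bookkeeping you flag: each of the three simplifications introduces an error that must be checked against the $i\le cn$ prefactor, and the check only works because $c<1$ keeps $n-i\ge(1-c)n$ bounded away from zero. The paper's $1+x\le e^x$ move sidesteps all of that by being a genuine inequality in the right direction from the start, at the cost of needing to evaluate a slightly awkward integral. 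Both yield the same $h(\log(1-c)+c)n+O(n)$ exponent, and since you derive a two-sided estimate for $L$, the claimed one-sided bound follows a fortiori.
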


Before proving this claim, let us argue why it finishes our proof of this theorem. When $k$ is linear in terms of $n$, it is well-known that binomial coefficient $\binom{n}{k}$ is bounded above by $(ne/k)^k$. \revise{\Cref{subset-bound}} applied to \eqref{exp-bound} implies that 
\[
\E{H_{k,n}} \:\le\: \left(\frac{ne}{k}\right)^k\: \prod_{i=1}^{k-1}  \left(1-\frac{\r}{\rr } \frac{1}{(\sqrt{\revise{n}-i}+1)^2}\right)^i \eq \Exp{n \, \revise{\varphi_{\r}}(c)  + O(n)} .
\]
If $\revise{\varphi_{\r}}(c) < 0$, then $\lim_{n\to \infty}\E{H_{k,n}} = 0$ which is what we want for $k=cn$ to be an upper bound on the stability number. For the tightest bound, we take the smallest value of $c$ with $\revise{\varphi_{\r}}(c) < 0$. \Cref{rootf} shows that $\revise{\varphi_{\r}}(c) < 0$ for all $c\in(c^{*},1)$ where $c^{*}$ is the unique positive root of $\revise{\varphi_{\r}}$. Therefore, taking $c = c^{\ast} + \epsilon$ yields the desired upper bound of $c^{\ast}+\epsilon$.

It remains to prove the two claims used in the above proof. 

%\begin{proof}[{\textbf{Proof of \Cref{rootf}}}]
\subsubsection{Proof of Claim~\ref{rootf}}

It is clear that the domain of $\revise{\varphi_{\r}}$ is in $(-\infty,1)$. Let us first prove uniqueness of the root in $(0,1)$. We have
	\[
\frac{\partial}{\partial c}\revise{\varphi_{\r}}(c) \eq  \frac{\r}{\rr }\left( 1-\frac{1}{1-c}\right)-\log c, \qquad
\frac{\partial^2}{\partial c^2}\revise{\varphi_{\r}}(c) \eq -\frac{1}{c}-\frac{\r}{\rr }\cdot \frac{1}{(1-c)^2} \,.
	\]
	As the second derivative $\frac{\partial^2}{\partial c^2}\revise{\varphi_{\r}}(c)$ is always negative, $\revise{\varphi_{\r}}(c)$ is concave for fixed $\r$. Therefore, there are at most $2$ roots. Next, we observe that
	\[
	\lim_{c\rightarrow 0} \revise{\varphi_{\r}}(c) = 0,\quad \lim_{c\rightarrow 0}f'_{\r}( c)=\infty   , \quad \lim_{c\rightarrow 1} \revise{\varphi_{\r}}(c) = -\infty
	\]
	The boundary value of $\revise{\varphi_{\r}}(c)$ at $c=0$ is 0. However, $c=0$ is not in the domain of $\revise{\varphi_{\r}}(c)$ and so there is at most 1 root for $\revise{\varphi_{\r}}(c)$. We now need to show such root actually exists between 0 and 1. 
	Notice that the slope near $c=0$ is positive and  there must exist $a(\r)\in (0,1)$ such that $\revise{\varphi_{\r}}(a(\r)) > 0$ for all $\r\in (0,1)$. Since the either boundary value at $c=1$ is $-\infty$, the continuity of $\revise{\varphi_{\r}}(c)$ implies the existence of a root $c^\ast\in (a(\r), 1)$. In particular, $\revise{\varphi_{\r}}(c) <0$ for all $c\in (c^\ast, 1)$ and $\revise{\varphi_{\r}}(c) > 0$ for all $c\in (0, c^\ast)$.
	
Although there is no explicit formula for the root $c^\ast$, we can find some upper bound function $c(\r)$ such that $1 > c(\r) > c^\ast$ for all $\r\in (0,1)$. %and then $c(\r)\cdot n$ is a asymptotic upper bound for stability number. 
Consider $c(\r) = e^{-\r} + \frac{\r}{10}$ and denote the function $g(\r)$ by
	\[
	g(\r) \define \revise{\varphi_{\r}}\left(c(\r)\right) \eq \left(1-\frac{1}{1-e^{-\r} - \frac{\r}{10}}\right)\, \r \:-\: \log\left(e^{-\r} + \frac{\r}{10}\right) .
	\]
We need to show $g(\r) <0$ for all $\r\in (0,1)$. It is clear that the boundary values of $g(\r)$ are $g(0) = 0$ and $g(1) \approx -0.1197$. Since $g(\r)$ is continuous and both boundary values are non-positive, it is sufficient to prove $g(\r)$ has no root within interval $(0,1)$. Suppose not, then $\exists\, \r^\ast \in (0,1)$ such that 
	\[
	\left(1-\frac{1}{1-e^{-\r^\ast} - \frac{\r^\ast}{10}}\right)\, \r^\ast \eq \log\left(e^{-\r^\ast} + \frac{\r^\ast}{10}\right)
	\]
Observe that $e^{-\r} + \r/10$ is monotone decreasing for $\r \in (0,1)$. Therefore, for $\r\in(0,1)$, the right hand side $\log\left(e^{-\r} + \r/10\right)$ is also monotone decreasing while the left hand side $	\left(1-\frac{1}{1-e^{-\r} - \frac{\r}{10}}\right)\cdot \r$ is monotone increasing. Furthermore, they both attain 0 at $\r=0$.
	\[
	\left.\log\left(e^{-\r} + \frac{\r}{10}\right) \right\vert_{x=0} = 0 = 	\left.\left(1-\frac{1}{1-e^{-\r} - \frac{\r}{10}}\right)\, \r \right\vert_{x=0}
	\]
	Since one is decreasing and other is increasing, there is no $\r^\ast\in (0,1)$ making them equal. 
%\end{proof}

%\begin{proof}[{\textbf{Proof of \Cref{subset-bound}}}]
\subsubsection{Proof of Claim~\ref{subset-bound}}

Since $1+x\le e^x$, we have
\begin{equation}\label{eqn:prod-obs}
\begin{aligned}
		\prod_{i=1}^{k} \left(1- \frac{h}{\revise{\sqrt{n-i}+1}}\right)^i &\le \Exp{ -h \cdot \sum_{i=1}^{k} \frac{i}{(\sqrt{\revise{n}-i}+1)^2}}
\end{aligned}
\end{equation}
The function $x/(\sqrt{\revise{n}-x}+1)^2$ is monotone increasing for $0\le x \le \revise{n}$. We can estimate the inner summation in \eqref{eqn:prod-obs} by evaluating an integral 
\begin{equation} \label{eqn:lem-prod-ub}
\begin{aligned}
	\sum_{i=1}^{k} &\frac{i}{(\sqrt{\revise{n}-i}+1)^2} \ge \int_{0}^{cn}  \frac{x}{(\sqrt{\revise{n}-x}+1)^2} dx\\ 
	&= -cn + 6\sqrt{n} - 6 + \frac{(4c-6)n+6}{1+\sqrt{(1-c)n}} +  2(n-3)\log\left({\frac{1+\sqrt{n}}{1+\sqrt{(1-c)n}}}\right)
\end{aligned}
\end{equation}
When $n\to\infty$, the lower bound above is dominated by $-cn$ and $2n\log\left({\frac{1+\sqrt{n}}{1+\sqrt{(1-c)n}}}\right)$. For the second term, its asymptotic value can be obtained from the Taylor expansion of $\log{x}$, i.e. 
	\[
	\log\left({\frac{1+\sqrt{n}}{1+\sqrt{(1-c)n}}}\right) = \log\left(\frac{1}{\sqrt{1-c}} + O\left(\frac{1}{\sqrt{n}}\right)\right) = -\frac{1}{2}\log(1-c) + O\left(\frac{1}{\sqrt{n}}\right) .
	\]
	By using big-O notation, \eqref{eqn:lem-prod-ub} can be written as
	\[
	\sum_{i=1}^{k} \frac{i}{(\sqrt{\revise{n}-i}+1)^2} \ge -(\log(1-c)+c)\, n + O(n) .
	\]
	Therefore, we can apply this bound to \eqref{eqn:prod-obs} to get
	\[
	\prod_{i=1}^{k} \left(1- \frac{h}{\revise{\sqrt{n-i}+1}}\right)^i \le \Exp{ h\,  (\log(1-c)+c)\, n +O(n)} ,
	\]
which finishes our proof for this claim.
%\end{proof}

%It is interesting that, based on the numerical study, the stability number $k(n)$ tends to be linear with respect to $n$ and the initial edge probability $p$ has minor effect on $k(n)$ compared to the decay parameter $\r$.
%	\begin{figure}[H]
%		\centering
%		\subfloat[$\r=0.1$]{
%			\includegraphics[width=0.45\textwidth]{stab_0.1.eps}}
%		\hfill
%		\subfloat[$\r=0.9$]{
%			\includegraphics[width=0.45\textwidth]{stab_0.9.eps}
%		}
%		\caption{ Stability number v.s. Number of vertices}
%	\end{figure}

%--------------------------------------------
\section{\changes{Proofs for the Recursion Formula}}	\label{sec:missing}

Now we give proofs of technical results from \Cref{sec:recur} about the recursion formula~\cref{def:recur}.

\begin{proof}[\textbf{Proof of \Cref{obsfa}}]
Rewrite $f_{a} = x(1-ax)= - \left(\sqrt{a}x - \frac{1}{2\sqrt{a}}\right)^2 + \frac{1}{4a}$ and then the first two claims follow immediately. The third and fifth claims are, respectively, equivalent to $-ax^{2}=0$ and $-ax^{2}<0$. The fourth claim is because $a\in(0,1]$ implies that $1-ax \in (0,1)$ for $x\in(0,1)$. The sixth claim is no more than a simple computation $af_a(x) = ax(1-ax) = f_1(ax)$. To prove the seventh claim, we evaluate $f_a(x) - f_b(x) = (b-a)x^2$ which is negative if $a >b$ and $x>0$. 
\end{proof}

\begin{proof}[\textbf{Proof of \Cref{xi-bounds}}]
We immediately have from the fourth, fifth and second claims in \Cref{obsfa} that~\eqref{def:recur} generates a decreasing sequence over $(0,1/4a]$. The sixth claim in \Cref{obsfa} implies that the sequence $\{y_{n}\}$ formed by the recurrence $y_{n} = f_{1}(y_{n-1})$ is related to $\{x_{n}\}$ by $y_{n} = ax_{n}$. Therefore, to obtain the other bounds on $x_{n}$ it suffices to bound with $a=1$ and then scale the results by dividing with $a$. Henceforth, assume $a=1$.

We prove by induction that $x_{n} < \frac{1}{n - 1+\frac{1}{x_1}}$. 
%Then the fact that $1/x_{1} > 1$ due to $x_{1}\in(0,1)$, implies the upper bound of $1/n$.
The base case of $n=2$ holds because $x_{2} = f_{1}(x_{1}) = x_{1}(1-x_{1}) < x_{1}/(1+x_{1}) = 1/(1 + 1/x_{1})$, where the inequality is from $1-x_{1}^{2} < 1$. Suppose $x_n < 1/(n+\phi)$ for some $n\ge 2$, where for convenience of notation we let $\phi = \frac{1}{x_1}-1$. %We have to show that $x_{n+1} < 1/(n+1+\phi)$. 
Note that $\phi > 0$ because $x_{1}\in(0,1)$, and hence $1/(n+\phi) \in (0, 1/2)$ and $1/(n+\phi) < 1/n$ for $n\ge 2$. Monotonicity of $f_{1}$ over $(0,1/2)$ from the first claim in \Cref{obsfa} and our induction hypothesis imply that $f_{1}(1/(n+\phi)) > f_{1}(x_{n}) = x_{n+1} $. Since $f_{1}(1/(n+\phi)) = (n+\phi - 1)/(n+\phi)^{2} < 1/(n+1+\phi)$ and $x_{n+1} = f_{1}(x_{n})$, we have arrived at $x_{n+1} < 1/(n+1+\phi)$, which completes the induction for the upper bound.

For the lower bound, define two functions $\phi(\lambda) := \max\{\frac{\lambda}{\lambda-1}, \, \frac{1}{f_{1}(x_{1})} \}$ and $g_{n}(\lambda) := \phi(\lambda)+n\lambda$, with domain $\lambda>1$. We argue, by induction, that $1/g_{n}(\lambda)$ is a parametric lower bound on $x_{n}$. Denote $\phi := \phi(\lambda)$. The base case is $n = 2$. From recurrence formula $x_{2} = f_1(x_1)$ and $\phi \ge 1/f_1(x_1)$, we get $x_{2} \ge 1/\phi$ and thus $x_{2} > 1/(\phi + 2\lambda)$ since $\lambda > 0$. For the inductive step, assume $x_n \ge 1/(\phi + \lambda n)$ is true. \Cref{xi-bounds} says that $x_n\le 1/4$ for all $n\ge 2$ and the first claim in \Cref{obsfa} asserts $f_1(x)$ is monotone increasing on $(0,1/4]$. 
	\(
	x_{n+1} \eq f_1(x_n) \ge f_1\left(\frac{1}{\phi+\lambda n} \right) \eq \frac{(\phi+\lambda n)-1}{(\phi+\lambda n)^2}
	\) 
	 Since $(\phi+n\lambda-1)(\phi+(nk+1)\lambda) = (\phi+n\lambda)^{2} +\lambda(\phi+n\lambda) - (\phi+n\lambda+\lambda)$ and $\lambda(\phi+n\lambda) - (\phi+k\lambda+\lambda) = (\phi+n\lambda)(\lambda-1) - \lambda > 0$ where the inequality is because $\phi+n\lambda > \phi = \lambda/(\lambda-1)$, it follows that $f_{1}(\frac{1}{\phi+\lambda n}) > 1/(\phi+(n+1)\lambda)$. This leads to $x_{n+1} > 1/(\phi+(n+1)\lambda) = \frac{1}{\phi+\lambda (n+1)}$.

Now we show that our claimed lower bound on $x_{n}$ is a lower bound on the supremum of $1/g_{n}(\lambda)$ over $(1,\infty)$, or equivalently an upper bound on $\inf_{\lambda > 1}\, g_{n}(\lambda)$. Denote $\lambda^{\ast} = 1/(1 - f_{1}(x_{1}))$. This value is such that $\lambda^{\ast}/(\lambda^{\ast}-1) = 1/f_{1}(x_{1})$. Since $\lambda/\lambda-1$ is a decreasing function of $\lambda$, we have that the function $\phi$ has non-differentiability at $\lambda^{\ast}$ and $g_{n}$ can be written as
\[
g_{n}(\lambda) \eq \begin{cases}
\frac{\lambda}{\lambda-1} + n\lambda, & 1 < \lambda \le \lambda^{\ast}\smallskip\\
\frac{1}{f_{1}(x_{1})} + n\lambda, & \lambda \ge \lambda^{\ast}.
\end{cases}
\]
Therefore, 
\begin{equation}	\label{twoinf}
\inf_{\lambda > 1}\, g_{n}(\lambda) \eq \min\left\{\inf_{1 < \lambda \le \lambda^{\ast}}\, \frac{\lambda}{\lambda-1} + n\lambda, \; \inf_{\lambda \ge \lambda^{\ast}} \, \frac{1}{f_{1}(x_{1})} + n\lambda \right\},
\end{equation}
and let us evaluate the two infimums separately. The second one is obviously equal to $\frac{1}{f_{1}(x_{1})} + n\frac{1}{1 - f_{1}(x_{1})}$. For the first infimum, it is easy to verify that the function to be minimised is convex in $\lambda$, and the first derivative is $n - 1/(\lambda-1)^{2}$, so that the stationary point is at $\tilde{\lambda} = 1 + 1/\sqrt{n}$. Hence, the minimum value of this function over the real line is its value at $\tilde{\lambda}$, which is equal to $(\sqrt{n}+1)^{2}$. We are interested in the minimum over the interval $(1,\lambda^{\ast}]$. The function has a vertical asymptote $\lambda = 1$ and its value at the other endpoint is $\frac{1}{f_{1}(x_{1})} + \frac{n}{1 - f_{1}(x_{1})}$. It is easy to verify that $(\sqrt{n}+1)^{2} \;\le\;  \frac{1}{f_{1}(x_{1})} + \frac{n}{1 - f_{1}(x_{1})}$ if and only if $((\sqrt{n}+1)f_{1}(x_{1}) - 1)^{2}\ge 0$, which is obviously true. Therefore, the minimum over $(1,\lambda^{\ast}]$ is $(\sqrt{n}+1)^{2}$ if $\tilde{\lambda} \le \lambda^{\ast}$, otherwise the minimum is $\frac{1}{f_{1}(x_{1})} + \frac{n}{1 - f_{1}(x_{1})}$. 
%The first value can be upper bounded using \Cref{simplelem1}. 
The second value can be upper bounded as follows,
\[
\frac{1}{f_{1}(x_{1})} + n\frac{1}{1 - f_{1}(x_{1})} \;\le\; \frac{1}{f_{1}(x_{1})} + n\frac{\frac{1}{4}}{f_{1}(x_{1})(1 - \frac{1}{4})}
\eq \left(1 + \frac{n}{3} \right)\frac{1}{f_{1}(x_{1})}
\; < \; \frac{n}{f_{1}(x_{1})},
\]
where the first inequality is by applying the following fact that is simple to verify by cross-multiplying denominators,
\[
0 < \xi_{1} \le \xi_{2} < 1 \implies \frac{1}{1-\xi_{1}} \le \frac{\xi_{2}}{\xi_{1}(1-\xi_{2})},
\]
to $\xi_{1} = f_{1}(x_{1})$ and $\xi_{2} = 1/4$ (this upper bound on $x_{2}=f_{1}(x_{1})$ is from \Cref{xi-bounds}), %and is a stronger upper bound than $1/2$ from \Cref{})
and the second inequality is because $n > 1 + n/3$ for $n\ge 2$. Thus, the infimum in \eqref{twoinf} is upper bounded by \revise{$(\sqrt{n}+1)^2$} if $\tilde{\lambda} \le \lambda^{\ast}$, otherwise the bound is $n/f_{1}(x_{1})$.

It remains to simplify the condition  $\tilde{\lambda} \le \lambda^{\ast}$, which becomes $1 + 1/\sqrt{n} \le 1/(1-f_{1}(x_{1}))$. This is equivalent to $f_{1}(x_{1}) \ge 1/(1 + \sqrt{n})$, which rearranges to $n \ge (\frac{1}{f_{1}(x_{1})} - 1)^{2}$.
\end{proof}

The next result we have to establish about the recursion formula is the bounds on the partial sum. To prove the lower bound, let us recall that the $n^{th}$ harmonic number is
\[
H_{n} \define 1 + \frac{1}{2} + \frac{1}{3} + \cdots + \frac{1}{n},
\]
which is the partial sum of the harmonic series $\sum_{i}\frac{1}{i}$. It is obvious from the Riemann approximation of an integral that  
\[
H_{n} \le \log{(n+1)} + 1.
\]
A well-known fact about the harmonic number is that it exhibits a logarithmic growth rate, which can be derived using the Euler-Maclaurin expansion formula.

\begin{lemma}[\cite{boas1971partial}]	\label{harmonic}
$H_{n} = \gamma + \log{n} + \frac{1}{2n} - R_{n}$, where $0\le R_{n} \le \frac{1}{8n^{2}}$ and $\gamma \approx 0.57721$ is the Euler constant.
\end{lemma}

We are now ready to argue our bounds on the partial sums of \cref{def:recur}.

\begin{proof}[\textbf{Proof of \Cref{boundsonsum}}]
For the upper bound, we have
\[
a \sum_{i=1}^{n}x_{i} \,\le\, ax_{1} + \sum_{i=2}^{n}\frac{1}{i-1+\frac{1}{x_{1}}}
\,\le\, ax_{1} + \int_{\frac{1}{x_{1}}}^{\frac{1}{x_{1}}+n-1}\frac{1}{t} \dif t \eq ax_{1} + \log{\left(1 + (n-1)x_{1} \right)},
\]
where the first inequality is using the upper bound in \Cref{xi-bounds} and the second inequality is due to the summation being the right Riemann sum of the decreasing function $t \mapsto 1/(t + \frac{1}{x_{1}})$ over the interval $[0,n-1]$.

For the lower bound, let us denote $n^{\ast} := (\frac{1}{f_{1}} - 1)^{2}$ for convenience. Using the lower bound from \Cref{xi-bounds} for each term in the sequence, the partial sum can be lower bounded as
\[
\begin{split}
a \sum_{i=1}^{n}x_{i} \,\ge\, ax_{1} + \sum_{i=2}^{n}\ell_{i}(x_{1}) 
&\eq ax_{1} \;+\; \sum_{i=2}^{n^{\ast}-1}\frac{f_1(x_{1})}{i} \;+\; \sum_{i=n^{\ast}}^{n} \frac{1}{(\sqrt{i}+1)^2} \\
&\eq ax_{1} \;+\; f_{1}(x_{1})(H_{n^{\ast}-1}-1) \;+\; \sum_{i=n^{\ast}}^{n} \frac{1}{(\sqrt{i}+1)^2} \\
&\,\ge\, ax_{1} \;+\; f_{1}(x_{1})(H_{n^{\ast}-1}-1) \;+\; \int_{n^{\ast}}^{n}\frac{1}{(\sqrt{t}+1)^2}\dif t \\
&\eq ax_{1} \;+\; f_{1}(x_{1})(H_{n^{\ast}-1}-1) \;+\; 2\left[\frac{1}{1+\sqrt{t}} + \log{(1+\sqrt{t})} \right]_{n^{\ast}}^{n}\\
&\,\ge\, ax_{1} \;+\; f_{1}(x_{1})(H_{n^{\ast}-1}-1) \;-\; \frac{2}{1+\sqrt{n^{\ast}}} \;-\; 2\log{\left(1+\sqrt{n^{\ast}}\right)} \\
& \qquad \qquad \qquad \;+\; \frac{2}{1+\sqrt{n}} \;+\; \log{n},
\end{split}
\]
where the first equality is by definition of $\ell_{i}$ in \eqref{piecewise}, the second equality is the definition of harmonic number $H_{n}$, the second inequality is due to the summation being the left Riemann sum of the decreasing function $t \mapsto 1/(1+\sqrt{t})^{2}$ over the interval $[n^{\ast},n]$, and the last inequality is from the fact that $2\log{(1+\sqrt{t})} = \log{(1 + t + 2\sqrt{t})} > \log{t}$. Now we simplify the terms in the middle involving $n^{\ast}$ and argue that they are lower bounded by $\eta(x_{1})$. Since $n^{\ast} := (\frac{1}{f_{1}} - 1)^{2}$, we have $1+\sqrt{n^{\ast}} = 1/f_{1}$ and $\log{(1+\sqrt{n^{\ast}})} = - \log{f_{1}}$. % > -\log{f_{1}}$, where the inequality is because $f_{1}\le 1/4$ from \Cref{obsfa} implies that $1/f_{1}-2 > 1$.
Therefore, the middle terms depending on $n^{\ast}$ are
\[
f_{1}(H_{n^{\ast}-1}-1) - 2f_{1} + 2\log{f_{1}} \eq f_{1}(H_{n^{\ast}-1} - 3) + 2\log{f_{1}},
\]
and we have to argue that this is lower bounded by $\eta(x_{1})$. By \Cref{harmonic}, the $n^{th}$ harmonic number can be lower-bounded as 
\[
H_{n} \ge \gamma + \log{n} + \frac{1}{2n} - \frac{1}{8n^{2}} \implies H_{n}-3 \ge \gamma + \log{n} + \frac{1}{2n} - \frac{1}{8n^{2}} - 3 \ge \log{n} - \frac{5}{2},
\]
where the last inequality is because $\gamma > 1/2$ and $1/2n - 1/8n^{2} > 0$. Substituting this lower bound into the middle terms depending on $n^{\ast}$ gives us the lower bound
\[
f_{1}\left(\log{(n^{\ast}-1)} - \frac{5}{2}\right) + 2\log{f_{1}}.
\]
Since $n^{\ast}-1 = (1/f_{1} - 2)/f_{1}$, we have $\log{(n^{\ast}-1)} = \log{(1/f_{1} - 2)} -\log{f_{1}}$, and so the lower bound on the middle terms becomes
\begin{align*}
(2-f_{1})\log{f_{1}} + f_{1}\left(\log{\left(\frac{1}{f_{1} }- 2\right)} - \frac{5}{2}\right) &\eq (2-f_{1})\log{f_{1}} + f_{1}\left(\log{(1- 2f_{1})} - \log{f_{1}} - \frac{5}{2}\right) \\
&\ge 2(1-f_{1})\log{f_{1}} - f_{1}\left(\log{2} + \frac{5}{2} \right) \,=:\, \eta(x_{1}),
\end{align*}
where the inequality is because $\log{(1-2f_{1})} \ge -\log{2}$ due to $f_{1}\le 1/4$ from \Cref{obsfa}.
\end{proof}
%Using the definition of $\eta(x_{1})$ and , it follows that 
%\[
%a \sum_{i=1}^{n}x_{i} \,\ge\, ax_{1} + \eta(x_{1}) + \frac{2}{1+\sqrt{n}} + \log{n}.
%\]

\section{\changes{Convergence Proofs for the Bernoulli Sequence}} \label{sec:bernoulli}

\revise{This section establishes the technical results about the Markov chain of Bernoulli r.v. from \Cref{sec:markovchain}. The main result \Cref{bernoulli-main} is proved in multiple parts in \Crefrange{sec:convergence1}{sec:distribution}. Finally, the generalised result \Cref{bernoulli-main3} is proved in \Cref{sec:concentration2} to conclude the paper.}

\subsection{Convergence of Expectation} \label{sec:convergence1}

We prove here the first claim of \Cref{bernoulli-main} that $\E{\YY_{n}}$ converges to $1/\p$. This convergence is a step towards establishing convergence in probability in the next section. 

Since $\E{\YY_{n}} = \E{S_{n}}/\log{n}$, establishing convergence of expectation to $1/\p$ is equivalent to showing that $\p\,\E{S_{n}}/\log{n}$ converges to 1. %The first step of our argument is to bound the expected value of the partial sum $S_{n}$. 
The definition of $S_{n}$ implies that $\E{S_{n}} = \sum_{i=1}^{n}\E{\X_{i}} = \sum_{i=1}^{n}\pp_{i}$. Since the sequence $\{\pp_{i}\}$ follows the recursion $\pp_{i} = f_{\p}(\pp_{i-1})$, we can use the bounding analysis of the recurrence formula that was done earlier in this paper. In particular, applying \Cref{boundsonsum} with $\pp_{i} = x_{i}$ and $a=\p$ gives us the bounds
\begin{equation}	\label{boundsonE}
\p\pp_{1} + \eta(\pp_{1}) + \frac{2}{1+\sqrt{n}} + \log{n} \,\le\,  \p\,\E{S_{n}} \,\le\, \p\pp_{1} + \log{\left(1 + (n-1)\pp_{1} \right)} .
\end{equation}
Divide throughout by $\log{n}$ to get
\[
1 + \frac{\p\pp_{1} + \eta(\pp_{1}) + \frac{2}{1+\sqrt{n}}}{\log{n}} \,\le\,  \p\,\E{\YY_{n}} \,\le\, \frac{\p\pp_{1} + \log{\left(1 + (n-1)\pp_{1} \right)}}{\log{n}} .
%\frac{ w+\eta(w) + \frac{2}{1+\sqrt{n}} + \log\left(n\right)}{\log{n}}\le  \frac{\rr \E{S^\sigma_{n}}}{\log{n}} \le \frac{w + \log(wn+1-w)}{\log{n}}
\]

We argue that the above lower and upper bounds have limit (as $n\to\infty$) equal to 1, and then the squeeze theorem implies that $\E{\YY_{n}}$ converges to $1/\p$. It is easy to see the limit of the lower bound because $2/(1+\sqrt{n}) \le 2$, and $\p$, $\pp_{1}$, and $\eta(\pp_{1})$ are all constants. Now consider the upper bound. \revise{It is sufficient to show the limit of upper bound is at most 1. Indeed, }
\[
\frac{\p\pp_{1} + \log{\left(1 + (n-1)\pp_{1} \right)}}{\log{n}} \revise{\le \frac{\p\pp_{1} + \log{\left(1 + (n-1) \right)}}{\log{n}} \eq 1 + \frac{\p\pp_{1}}{\log{n}}
}
\]
\revise{Since $\p, \pp_{1}$ are constants, the limit of $1 + \frac{\p\pp_{1}}{\log{n}}$ is 1. }
 %\hfill \qedsymbol

%\begin{proposition}(The 1st lower bound)
%\[
%\E{S_{n}} \, \ge \, \pp_{1}\left(\frac{\rr ^{n}}{\rr }\right)
%\]
%\end{proposition}
%\begin{proof}
%	Observe that the quadratic map $f_{\rr }(x) \eq x-\rr x^2\ge \r x$. Recall that $\pp_{n+1} \eq f_{\rr }(\pp_n) \ge \r \pp_n$ and then $\pp_{n+1}\ge \r q_n\ge \r^2q_{n-1}\ge \cdots\ge \r^n\pp_1$. Therefore, elements in $\{\pp_n\}$ are bounded below by $\{\r^{n-1}\pp_1\}$.
%	\[
%	\E{S_{n}} \eq \sum_{i\eq1}^{n}\pp_i \ge \sum_{i\eq1}^{n}\r^{i-1}\pp_1 \eq \pp_1\left(\frac{\rr ^{n}}{\rr }\right)
%	\]
%	The lower bound converges to $\pp_1/\rr $ with $n$ increases. However, we are not satisfied with this result since a finite lower bound $\pp_1/\rr $ provides no useful information for a divergent sequence $\{\E{S_n}\}$ when $n$ is very large.
%\end{proof}

%\begin{lemma}\label{lemma:2-lb}
%	With the same notations as in \Cref{boundsonE},
%	\[
%	\E{S_{n}} \ge \pp_1 + \frac{1}{\rr \lambda}\log \left(\frac{\phi+\lambda(n+1)}{\phi+2\lambda} \right)
%	\]
%\end{lemma}

\subsection{Convergence in Probability}	\label{sec:convergence2}

This section proves the second claim of \Cref{bernoulli-main} that $\YY_{n}$ concentrates to $1/\p$. We first argue an upper bound on the second moment of $S_{n}$, and then use this bound in conjunction with two key technical results on sequences of random variables to finally deduce our claim on the concentration of $\YY_{n}$.

\subsubsection{Second Moment of Partial Sum}

The definition $S_n = Y_1+Y_2+\cdots+Y_n$ implies that 
\[
S_{n}^{2} \eq \sum_{i=1}^{n}\X_{i}^{2} + 2\sum_{i < j}\X_{i}\X_{j} \eq \sum_{i=1}^{n}\X_{i} + 2\sum_{i < j}\X_{i}\X_{j},
\]
where the second equality is due to each $\X_{i}$ being a Bernoulli r.v. Therefore, linearity of expectation and $\E{S_{n}} = \sum_{i=1}^n \E{\X_i}$ means that the second moment of $S_{n}$ is
\begin{equation}\label{eqn:square-expected}
\E{S_{n}^2} \eq \E{S_{n}}  + 2 \sum_{i < j}\E{(\X_i \X_j)}.
\end{equation}
The next lemma upper bounds the expected value of each product term $\X_{i}\X_{j}$.

\begin{lemma}\label{square_expected_upper_bound}
For $n\ge 2$,
	\[
	\E{S_{n}^2} \le \frac{2\r}{\rr }\E{S_{n}}+ \left(\frac{\log{n}}{\rr }\right)^2 .
	\]
\end{lemma}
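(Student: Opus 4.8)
## Proof Proposal for Lemma~\ref{square_expected_upper_bound}

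The plan is to control the cross terms $\E{\X_i\X_j}$ in the expansion \eqref{eqn:square-expected} and then sum them up. For indices $i<j$, I would write $\E{\X_i\X_j} = \P{\X_i=1,\,\X_j=1} = \P{\X_i=1}\,\P{\X_j=1 \mid \X_i=1} = \pp_i\,\P{\X_j=1\mid\X_i=1}$. The Markov structure of the sequence (which in the graph setting is \mythref{markovchain}, and more generally is the hypothesis feeding into \mythref{bernoulli-main2}) lets me invoke the conditional-probability bound of the type in \mythref{ineqOfCondProb}: for $i<j$ we have $\P{\X_j=1\mid\X_i=1}\le \pp_{j-1}$. Hence $\E{\X_i\X_j}\le \pp_i\,\pp_{j-1}$. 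This is the key per-term estimate; everything after is summation bookkeeping.

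Next I would sum: $2\sum_{i<j}\E{\X_i\X_j} \le 2\sum_{i<j}\pp_i\,\pp_{j-1}$. I'd like to compare this to $\bigl(\sum_i \pp_i\bigr)^2 = \E{S_n}^2$, but the shifted index $\pp_{j-1}$ instead of $\pp_j$ needs care. Since $\{\pp_k\}$ is decreasing (from \mythref{recur}/\mythref{xi-bounds}), $\pp_{j-1}\ge\pp_j$, so I cannot directly bound by $\E{S_n}^2$; instead I would split off one factor of decay. The recurrence $\pp_j = \pp_{j-1}(1-\rr\pp_{j-1})$ gives $\pp_{j-1} - \pp_j = \rr\,\pp_{j-1}^2$, i.e. $\pp_{j-1} = \pp_j + \rr\pp_{j-1}^2 \le \pp_j + \rr\pp_{j-1}\pp_{j-2}\le\cdots$; more cleanly, $\rr\pp_{j-1}\le 1$ so $\pp_{j-1}\le \pp_{j-1}$, which is useless — the right move is to bound the double sum directly. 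Write $2\sum_{i<j}\pp_i\pp_{j-1} = \sum_{i\ne j, \, i<j}(\cdots)$ and observe $2\sum_{1\le i<j\le n}\pp_i\pp_{j-1} \le 2\sum_{i=1}^{n}\pp_i \sum_{k=1}^{n-1}\pp_k \le 2\bigl(\sum_{k=1}^{n}\pp_k\bigr)^2$ only if the indices decouple — but they do decouple after relaxing $i<j$ to $i\le n$, $j-1\le n-1$: indeed $\sum_{1\le i<j\le n}\pp_i\pp_{j-1}\le \bigl(\sum_{k=1}^n\pp_k\bigr)\bigl(\sum_{k=1}^{n-1}\pp_k\bigr)\le \E{S_n}^2$. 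Then $\E{S_n^2}\le\E{S_n}+2\E{S_n}^2$, and substituting the upper bound $\E{S_n}\le \pp_1 + \log n \le \tfrac{1}{\rr}\log n$ (up to absorbing constants, from \eqref{boundsonE} with $a=\rr$ and $\rr\le 1$) would give roughly $(\log n/\rr)^2$ for the quadratic term — but the stated bound has the precise coefficient $2\r/\rr$ on the \emph{linear} term, not $2$, so the diagonal/cross split must be done more carefully.

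The main obstacle, and where the real work lies, is getting the sharp constant $2\r/\rr$ rather than a crude $O(\E{S_n})$. This suggests not bounding $\P{\X_j=1\mid\X_i=1}\le\pp_{j-1}$ globally, but rather using the decay more quantitatively: from the recurrence, $\sum_{j} \pp_{j-1}^2 = \tfrac{1}{\rr}\sum_j(\pp_{j-1}-\pp_j) = \tfrac{1}{\rr}(\pp_1-\pp_n)\le \tfrac{\pp_1}{\rr}$, a \emph{constant}, and similarly $\rr\sum_j \pp_{j-1}\pp_j$-type sums telescope. I would try to organize the cross-term sum so that the "diagonal-adjacent" contributions (where the Markov correlation is strongest) are collected via these telescoping identities and contribute the $\tfrac{2\r}{\rr}\E{S_n}$ piece, while the genuinely off-diagonal part is bounded by $\bigl(\sum\pp_k\bigr)^2 \le (\log n/\rr)^2$ using \eqref{boundsonE}. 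Concretely: $\P{\X_j=1\mid\X_i=1} = \r\pp_{j-1} + (\text{correction})$ when $j=i+1$, and iterating the transition matrix $\mathbf{P}^{\cdot}_{k}$ of \mythref{markovchain} shows $\P{\X_j=1\mid\X_i=1}\le \pp_{j-1}$ with the excess over the independent value $\pp_j$ being summable. So the plan is: (i) $\E{\X_i\X_j}\le\pp_i\pp_{j-1}$; (ii) reindex and split $\sum_{i<j}\pp_i\pp_{j-1} = \sum_{i<j}\pp_i\pp_j + \sum_{i<j}\pp_i(\pp_{j-1}-\pp_j) = \sum_{i<j}\pp_i\pp_j + \rr\sum_{i<j}\pp_i\pp_{j-1}^2$; (iii) bound $\sum_{i<j}\pp_i\pp_j\le\tfrac12\E{S_n}^2\le\tfrac12(\log n/\rr)^2$ using \eqref{boundsonE}; (iv) bound $\rr\sum_{i<j}\pp_i\pp_{j-1}^2\le \rr\,\E{S_n}\sum_{k\ge1}\pp_k^2 \le \rr\,\E{S_n}\cdot\tfrac{\pp_1}{\rr} = \pp_1\E{S_n}\le\E{S_n}$, and then tighten to absorb into $\tfrac{\r}{\rr}\E{S_n}$ using $\pp_1\le 1$ and tracking the factor of $\r$ that appears in the one-step transition. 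Assembling these yields $\E{S_n^2}=\E{S_n}+2\sum_{i<j}\E{\X_i\X_j}\le \tfrac{2\r}{\rr}\E{S_n}+(\log n/\rr)^2$. The delicate point throughout is making sure the constants line up exactly as claimed, which I expect requires using the transition matrix iterates rather than the one-step bound alone.
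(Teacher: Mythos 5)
Your overall plan — expand $\E{S_{n}^2} = \E{S_{n}} + 2\sum_{i<j}\E{\X_i\X_j}$, bound each cross term by $\pp_i\pp_{j-1}$ via the Markov chain, then control the double sum — is the same starting point as the paper's, but from there you take a genuinely different route. The paper (i) isolates the adjacent term $i=j-1$, where $\E{\X_{j-1}\X_j}=\r\pp_{j-1}^2=\frac{\r}{\rr}(\pp_{j-1}-\pp_j)$ telescopes to $\frac{\r}{\rr}(\pp_1-\pp_n)$, and (ii) for $i\le j-2$ discards the probabilistic structure entirely, replacing $\pp_i$ by the deterministic bound $\frac{1}{\rr i}$ from \mythref{recur}, so that the inner sum becomes $\frac{1}{\rr}H_{j-2}\le\frac{1}{\rr}(\log(j-1)+1)$, and then once more bounds $\pp_{j-1}\le\frac{1}{\rr(j-1)}$ to reduce to $\frac{1}{\rr^2}\sum_j\frac{\log j}{j}\le\frac{1}{2}(\frac{\log n}{\rr})^2$ by an integral comparison. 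Your route instead keeps the $\pp_i$'s, splits $\pp_{j-1}=\pp_j+\rr\pp_{j-1}^2$ so that $\sum_{i<j}\pp_i\pp_{j-1}=\sum_{i<j}\pp_i\pp_j+\rr\sum_{i<j}\pp_i\pp_{j-1}^2$, bounds the symmetric part by $\frac{1}{2}(\E{S_n})^2$, and telescopes $\sum_k\pp_k^2=\frac{\pp_1-\pp_n}{\rr}$ for the correction. This is cleaner bookkeeping and it is a valid alternative derivation.

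There are two concrete gaps, though. First, step (iii) claims $\frac12(\E{S_n})^2\le\frac12(\frac{\log n}{\rr})^2$, but the upper bound in~\eqref{boundsonE} is $\E{S_n}\le\pp_1+\frac{1}{\rr}\log(1+(n-1)\pp_1)$, not $\frac{\log n}{\rr}$; squaring produces an extra cross term of order $\frac{\pp_1\log n}{\rr}$ which cannot be absorbed into $(\frac{\log n}{\rr})^2$. You have to push that $O(\log n)$ remainder into the linear part of the bound, not discard it. Second, and you already flag this, step (iv) produces $\pp_1\E{S_n}\le\E{S_n}$, not $\frac{\r}{\rr}\E{S_n}$; the decay factor $\r$ only shows up in the \emph{adjacent} transition, and by using the uniform bound $\E{\X_i\X_j}\le\pp_i\pp_{j-1}$ for all $i<j$ you have already given it up, so there is nothing left to ``track.'' Your instinct that the constant $\frac{2\r}{\rr}$ cannot be recovered this way is correct — and it is worth knowing that it cannot be recovered the paper's way either: carrying the paper's own intermediate steps through carefully produces $\E{S_n^2}\le(1+\frac{2}{\rr})\E{S_n}+(\frac{\log n}{\rr})^2$, so the coefficient $\frac{2\r}{\rr}$ in the lemma's statement (which would tend to $0$ as $\r\to 0$, contradicting the unavoidable $+\E{S_n}$ from the diagonal) appears to be a slip. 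None of this matters downstream: \mythref{ratio_of_expected} only needs $\E{S_n^2}\le C\,\E{S_n}+(1+o(1))(\frac{\log n}{\rr})^2$ for some constant $C$, which both your argument (with the $O(\log n)$ term folded into $C\,\E{S_n}$) and the paper's deliver.
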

\begin{proof}
If $i\le j-2$, the joint probability $\P{\X_i\X_j=1}$ is at most $\pp_{i}\pp_{j-1}$. Indeed, by \Cref{ineqOfCondProb},
	\[
	\P{\X_i \X_j = 1} \eq \P{\X_{i}=1, \, \X_{j}=1} \P{\X_j=1}[\X_i=1]\P{\X_i=1} \le \pp_{i}\pp_{j-1}
	\]	
Furthermore, for two dependent variables $\X_{j-1}$ and $\X_{j}$,
	\[
	\P{\X_{j-1}\X_{j}=0} = \P{\X_j=0} + 
	\P{\X_{j-1}=0}\P{\X_j=1}[\X_{j-1}=0]=1-\pp_j+\pp_{j-1}(1-\pp_{j-1}) 
	\]
The recurrence relation implies $\pp_j=\pp_{j-1}(\rr r \pp_{j-1})$. Thus,
	\[
	\P{\X_i \X_j=1} \le 
	\left\{
	\begin{aligned}
		&\pp_i\pp_{j-1},& &i\le j-2\\
		&\left(\frac{1}{\rr }-1\right)(\pp_{j-1}-\pp_j),&&i=j-1
	\end{aligned}
	\right.
	\]
	We are ready to compute an upper bound for $\sum_{i < j}\E{\X_i \X_j}$.
	\[
	\begin{aligned}
		\sum_{i < j}\E{\X_i \X_j} &= \sum_{j=2}^n\sum_{i=1}^{j-1} \E{\X_i \X_j} =\sum_{j=2}^n \E{\X_{j-1}\X_j} + \sum_{i=3}^n\sum_{i=1}^{j-2}\E{\X_i \X_j}\\
		&\le\left(\frac{1}{\rr }-1\right) \sum_{i=2}^n (\pp_{j-1}-\pp_j) + \sum_{j=3}^n\sum_{i=1}^{j-2}\pp_i\pp_{j-1}\\
		&\le\left(\frac{1}{\rr }-1\right)(\pp_1-\pp_n) + \sum_{j=3}^n\sum_{i=1}^{j-2}\pp_i\pp_{j-1}.
	\end{aligned}
	\]
	Note that $\E{S_{n}} = \sum_{j=1}^n \E{X_j} = \sum_{j=1}^n \pp_j$. Then (\ref{eqn:square-expected}) can be written as
	\[
	\E{S_{n}^2} =\E{S_{n}}+ \frac{2\r}{\rr }(\pp_1-\pp_n)  + 2\sum_{j=3}^n\sum_{i=1}^{j-2}\pp_i\pp_{j-1} .
	\]
	From \Cref{edgeprob}, $\rr \pp_i$ has an upper bound $1/i$, and so
	\[
	\sum_{j=3}^n\sum_{i=1}^{j-2}\pp_i\pp_{j-1}\le \frac{1}{\rr }\sum_{j=3}^n  \pp_{j-1} \sum_{i=1}^{j-2} \frac{1}{i} .
	\]
	The partial sum of harmonic series $\sum_{i=1}^{j-2}\frac{1}{i} \le \log(j-1) +1$. Therefore,
	\[
	\sum_{j=3}^n\sum_{i=1}^{j-2}\pp_i\pp_{j-1} \le \frac{1}{\rr }\sum_{j=3}^n\pp_{j-1} + \frac{1}{\rr }\sum_{j=3}^{n}\log(j-1)\pp_{j-1} = \frac{\E{S_{n}}-\pp_1-\pp_2}{\rr }+ \frac{1}{\rr }\sum_{j=3}^{n}\log(j-1)\pp_{j-1} .
	\]
	Then, we reuse the bound $ \pp_{j} \le 1/(\rr  j)$ from \Cref{edgeprob},
	\[
	\frac{1}{\rr }\sum_{j=2}^{n-1}(\log{j})\pp_{j} \le \frac{1}{\rr ^2}\sum_{j=2}^{n-1} \frac{\log{j}}{j}  \le \frac{1}{\rr ^2} \int_{1}^{n}\frac{\log{t}}{t}dt = \frac{1}{2}\left(\frac{\log{n}}{\rr }\right)^{2}.
	\]	
	Combining the two inequalities above, we obtain
	\[
	\sum_{j=3}^n\sum_{i=1}^{j-2}\pp_i\pp_{j-1} \le \frac{\E{S_{n}}-\pp_1-\pp_2}{\rr } + \frac{1}{2}\left(\frac{\log{n}}{\rr }\right)^2 .	
	\]
	It follows that
	\[
	\begin{aligned}
		\E{S_{n}^2} &\le \frac{2\r}{\rr }\E{S_{n}}-2\pp_1-\frac{2}{\rr }\pp_2- \frac{2\r}{\rr }\pp_n + \left(\frac{\log{n}}{\rr }\right)^2\\
		&\le \frac{2\r}{\rr }\E{S_{n}}+ \left(\frac{\log{n}}{\rr }\right)^2 ,
	\end{aligned}
	\]
	which is the desired result.
\end{proof}

%By definition, $\Var{S_n} = \E{S_{n}^2}-(\E{S_{n}})^2$ and then \Cref{square_expected_upper_bound} implies an upper bound on variance immediately
%
%\begin{corollary}\label{variance_upper_bound}
%	For $n\ge 2$,
%	\[
%	\Var{S_n} \le \left(\frac{\log{n}}{\rr }\right)^2+\left(\frac{2\r}{\rr }\right)\E{S_{n}} - (\E{S_{n}})^2
%	\]
%\end{corollary}

\subsubsection{Two Lemmas on Random Sequences}

Let $\{\XX_{n}\}$ be a sequence of random variables all of which are defined on the same probability space. The following result about the terms in the sequence concentrating to their mean  is a straightforward consequence of a classic concentration inequality.

\begin{lemma}	\label{convergence1}
Suppose that $\XX_{n} \ge 0$ a.s. for all $n$, and the mean $\mu_{n}\in (0,\infty)$ and variance $\sigma^{2}_{n}$ of $\XX_{n}$ obey $\sigma^{2}_{n} = o(\mu^{2}_{n})$. Then, $\XX_{n}/\mu_{n}$ concentrates to 1.
\end{lemma}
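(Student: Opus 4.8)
The plan is to invoke Chebyshev's inequality directly; this lemma is the routine quantitative backbone behind the concentration statements in \mythref{bernoulli-main}. Fix an arbitrary $\epsilon > 0$. Since $\mu_{n}\in(0,\infty)$, the normalisation $\XX_{n}/\mu_{n}$ is well-defined, and the event $\{\abs{\XX_{n}/\mu_{n} - 1}\ge\epsilon\}$ coincides with $\{\abs{\XX_{n} - \mu_{n}}\ge\epsilon\mu_{n}\}$. Applying Chebyshev's inequality to the random variable $\XX_{n}$, which has mean $\mu_{n}$ and variance $\sigma_{n}^{2}$, yields
\[
\P{\abs{\XX_{n} - \mu_{n}}\ge\epsilon\mu_{n}} \:\le\: \frac{\sigma_{n}^{2}}{\epsilon^{2}\mu_{n}^{2}}.
\]
By the hypothesis $\sigma_{n}^{2} = o(\mu_{n}^{2})$, the right-hand side tends to $0$ as $n\to\infty$ for every fixed $\epsilon$, and therefore $\P{\abs{\XX_{n}/\mu_{n} - 1}\ge\epsilon}\to 0$, which is precisely the assertion that $\XX_{n}/\mu_{n}\overset{p}{\longrightarrow} 1$, i.e.\ it concentrates to the constant $1$.

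There is essentially no obstacle in this argument; it is a one-line consequence of Chebyshev once the hypotheses are in place. The only points worth flagging are bookkeeping: $\mu_{n} > 0$ is what legitimises dividing through by $\mu_{n}$ and rewriting the event, while the nonnegativity $\XX_{n}\ge 0$ a.s.\ is not actually needed for this step and is recorded only because the lemma will later be applied with $\XX_{n}$ equal to the (nonnegative) partial-sum quantities $S_{n}$ and $\YY_{n} = S_{n}/\log n$; there, the role of the $o(\mu_{n}^{2})$ hypothesis will be discharged using the second-moment bound of \mythref{square_expected_upper_bound} together with the convergence $\E{\YY_{n}}\to 1/\p$ already established.
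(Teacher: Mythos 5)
Your proof is correct and follows the same route as the paper: both apply Chebyshev's inequality to $\XX_{n}$, divide through by $\mu_{n} > 0$ to rewrite the event, and let the hypothesis $\sigma_{n}^{2} = o(\mu_{n}^{2})$ send the bound to zero. Your observation that the nonnegativity assumption $\XX_{n}\ge 0$ is in fact unused here is accurate and a useful remark.
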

\begin{proof}
Chebyshev's inequality applied to $\XX_{n}$ tells us that $\P{\abs{\XX_{n} - \mu_{n}} \ge k\sigma_{n}} \le 1/k^{2}$ for any $k > 0$. Taking $k = \epsilon \mu_{n}/\sigma_{n}$ for arbitrary $\epsilon > 0$ gives us $\P{\abs{\XX_{n} - \mu_{n}} \ge \epsilon \mu_{n}} \le \sigma^{2}_{n}/\left(\epsilon^{2}\mu^{2}_{n}\right)$. The assumption of $\mu_{n}$ being  positive makes the probability equal to $\P{\abs{\XX_{n}/\mu_{n} - 1} \ge \epsilon }$. Now taking limit on both sides leads to
\[
\lim_{n\to\infty} \P{\abs{\frac{\XX_{n}}{\mu_{n}} - 1} \ge \epsilon } \:\le\: \lim_{n\to\infty} \frac{\sigma^{2}_{n}}{\epsilon^{2}\mu^{2}_{n}} .
\]
Since $\sigma^{2}_{n} = o(\mu^{2}_{n})$, the right-hand side goes to zero, which implies that the left-hand side is also zero and therefore, $\XX_{n}/\mu_{n}\overset{p}{\longrightarrow} 1$.
\end{proof}

The second useful result is a special case of Slutsky's theorem, or more generally the fact that convergence in probability is preserved under multiplication \cite[cf.][Theorem III.3.6]{ccinlar2011probability}. %where both sequences converge to a constant and 

\begin{lemma}	\label{convergence2}
Let $\{\ZZ_{n}\}$ be a converging sequence of positive reals with limit $\tau$ and such that $\XX_{n}/\ZZ_{n} \overset{p}{\longrightarrow} \lambda$ for some constant $\lambda > 0$. Then, $\XX_{n} \overset{p}{\longrightarrow} \lambda\tau$.
\end{lemma}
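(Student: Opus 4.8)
The plan is to give a short self-contained $\varepsilon$-argument via the triangle inequality; the statement is the special case of Slutsky's theorem in which one of the two factors is deterministic, so invoking the full machinery is unnecessary. The starting point is the algebraic identity $\XX_{n} = \ZZ_{n}\cdot(\XX_{n}/\ZZ_{n})$, which yields the bound
\[
\abs{\XX_{n} - \lambda\tau} \;\le\; \abs{\ZZ_{n}}\,\abs{\frac{\XX_{n}}{\ZZ_{n}} - \lambda} \;+\; \lambda\,\abs{\ZZ_{n} - \tau}.
\]
Here the first summand is random while the second is deterministic and, by the hypothesis $\ZZ_{n}\to\tau$, tends to $0$.

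Next I would fix $\varepsilon > 0$. Since $\{\ZZ_{n}\}$ converges to the finite limit $\tau \ge 0$, there is an index $N$ (depending on $\varepsilon$) such that for every $n \ge N$ one has simultaneously $\abs{\ZZ_{n}} \le \tau + 1$ and $\lambda\abs{\ZZ_{n} - \tau} \le \varepsilon/2$. Consequently, for all $n \ge N$ the event inclusion
\[
\left\{\abs{\XX_{n} - \lambda\tau} \ge \varepsilon\right\} \;\subseteq\; \left\{\abs{\frac{\XX_{n}}{\ZZ_{n}} - \lambda} \ge \frac{\varepsilon}{2(\tau+1)}\right\}
\]
holds, and hence $\P{\abs{\XX_{n} - \lambda\tau} \ge \varepsilon} \le \P{\abs{\XX_{n}/\ZZ_{n} - \lambda} \ge \varepsilon/(2(\tau+1))}$ for all $n\ge N$.

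Finally, letting $n \to \infty$: the quantity $\varepsilon/(2(\tau+1))$ is a fixed positive constant, so the assumption $\XX_{n}/\ZZ_{n} \overset{p}{\longrightarrow} \lambda$ forces the right-hand side to vanish, whence $\lim_{n\to\infty}\P{\abs{\XX_{n} - \lambda\tau} \ge \varepsilon} = 0$. As $\varepsilon > 0$ was arbitrary, this is precisely $\XX_{n} \overset{p}{\longrightarrow} \lambda\tau$. There is no genuine obstacle here; the only point that deserves a line of care is that the uniform bound on $\abs{\ZZ_{n}}$ and the smallness of $\lambda\abs{\ZZ_{n}-\tau}$ must hold for the same (eventually all) indices $n$, which is immediate from $\ZZ_{n}\to\tau$.
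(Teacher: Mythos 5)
Your proof is correct. The paper does not actually prove this lemma at all---it simply observes that the claim is a special case of Slutsky's theorem (equivalently, that convergence in probability is preserved under multiplication) and cites a standard reference for it. You instead give a self-contained $\varepsilon$-argument exploiting the fact that the second factor $\ZZ_n$ is deterministic, which lets you avoid the full Slutsky machinery. The decomposition
\[
\abs{\XX_n - \lambda\tau} \le \abs{\ZZ_n}\,\abs{\tfrac{\XX_n}{\ZZ_n} - \lambda} + \lambda\,\abs{\ZZ_n - \tau}
\]
and the subsequent event inclusion are both correct, and the choice of $N$ so that $\ZZ_n \le \tau + 1$ and $\lambda\abs{\ZZ_n - \tau} \le \varepsilon/2$ hold simultaneously is legitimate since $\ZZ_n \to \tau$ deterministically. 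One small remark: your argument also covers the degenerate case $\tau = 0$ (which the lemma's hypotheses do not exclude, since only $\lambda > 0$ is required), because the bound $\abs{\ZZ_n}\le \tau+1$ is used precisely to keep the divisor $2(\tau+1)$ strictly positive. So your route is genuinely different in presentation---an elementary direct proof rather than an appeal to a named theorem---but it establishes exactly the same statement; the paper's version buys brevity, yours buys self-containedness.
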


Note that the sequence $\{\YY_{n}\}$ is of positive reals and hence it trivially converges in probability.

\subsubsection{Deducing the Concentration Result}

\begin{lemma} \label{ratio_of_expected}
$\displaystyle\lim_{n\to \infty}\frac{\E{S_{n}^2}}{(\E{S_{n}})^2} \eq 1$.
\end{lemma}
\begin{proof}
Since the variance of $S_{n}$ is equal to $\E{S_{n}^{2}} - (\E{S_{n}})^{2}$ and this variance is nonnegative, we know that the ratio $\E{S_{n}^2}/(\E{S_{n}})^2$ is at least 1. So, it remains to show that the limit is upper bounded by 1. For this proof, we only need a weaker lower bound for $\E{S_{n}}$ in \eqref{boundsonE} by dropping out $ \frac{2}{1+\sqrt{n}}$ to get
    \[
    \E{S_{n}}\ge \pp_1 + \frac{1}{\p}\left( \eta(p_{1}) + \log{n}\right),
    \]
which implies that
	\[
	(\E{S_{n}})^2 \ge \pp_1^2+\frac{2\pp_1}{\p}\left( \eta(p_{1}) + \log{n}\right)+\frac{1}{\p^2}\left( \eta(p_{1}) + \log{n}\right)^2 .
	\]
Combining this lower bound with the upper bound on $\E{S_{n}^2}$ from \Cref{square_expected_upper_bound} leads to
	\[
	\frac{\E{S_{n}^2}}{(\E{S_{n}})^2} \,\le\, \frac{2\r}{\rr \E{S_{n}}} +\frac{ \left(\frac{\log{n}}{\rr }\right)^2}{\pp_1^2+\frac{2\pp_1}{\rr }\left( \eta(w) + \log{n}\right)+\frac{1}{\rr ^2}\left( \eta(w) + \log{n}\right)^2} .
	\]
Note that $\lim_{n\rightarrow \infty} \E{S_{n}} = \infty$ and thus the first term goes to $0$. The limit of second term is dominated by coefficients of $(\log{n})^2$. Then 
	\[
	\lim_{n\rightarrow \infty}\frac{\E{S_{n}^2}}{(\E{S_{n}})^2} \le \lim_{m\rightarrow \infty} \frac{ \left(\frac{\log{n}}{\rr }\right)^2}{\frac{1}{\rr ^2} (\log{n})^2} =1,
	\]
which completes the proof of this claim.		
\end{proof}

This implies that the variance of $S_{n}$ is equal to $o((\E{S_{n}})^{2})$. Therefore, by \Cref{convergence1}, $S_{n}/\E{S_{n}}$ concentrates to 1. Equivalently, $\YY_{n}/\E{\YY_{n}}$ concentrates to 1.  From the convergence of expectation result in \Cref{sec:convergence1} we know that $\E{\YY_{n}}$ converges to $1/\p$. Then, applying \Cref{convergence2} with $\XX_{n} = \YY_{n}$ and $\ZZ_{n} = \E{S_{n}}/\log{n}$ yields the desired result that $\YY_{n}$ concentrates to $1/\p$.

\subsection{Distribution Function}	\label{sec:distribution}

The distribution of $\YY_{n}/\E{\YY_{n}}$ is the same as that of $S_{n}/\E{S_{n}}$. Since $S_{n}\ge 0$ a.s., Markov's inequality immediately provides the upper bound $\P{\frac{S_{n}}{\E{S_{n}}} \,\ge\, 1-\theta} \le 1/(1-\theta)$, which then also holds in the limit as $n\to\infty$. For the lower bound, we use  the Paley--Zygmund concentration inequality which gives us
	\[
	\P{S_{n} \ge (1-\theta)\,\E{S_{n}}} \ge \theta^2\frac{(\E{S_{n}})^2}{\E{S_{n}^2}}.
	\]
Taking $n\rightarrow \infty$, \Cref{ratio_of_expected} yields $\lim_{n\rightarrow \infty} \P{S_{n} \ge (1-\theta)\E{S_{n}}} \ge \theta^2$.

\subsection{Proof of Corollary~\ref{bernoulli-main3}}	\label{sec:concentration2}

Choose any $ \epsilon > 0$ such that $\epsilon < \min\{1-\p,\, \p/3  \}$. Since both $\{a_n\}$ and $\{b_n\}$ converge to $\p$, there exists an integer $N$ such that for every $n\ge N$ we have
	\[
	\p-\epsilon<a_n<\p+\epsilon ~\text{ and }~ \p-\epsilon<b_n<\p+\epsilon.
	\]
Let $\p^+ := \p+\epsilon$ and $\p^- := \p-\epsilon$. From the seventh property of $f_a$ in \Cref{obsfa},
	\begin{equation}\label{eqn:bd-bernoulli-main1}
	f_{\p^+}(\pp_n)\le f_{a_n}(\pp_n) \le \pp_{n+1}\le f_{b_n}(\pp_{n})\le f_{\p^-}(\pp_n),\qquad \forall n\ge N.
	\end{equation}
	 Construct two new sequences $\{l_i\}_{i\ge N}$ and $\{u_i\}_{i\ge N}$ in which $l_N=u_N=\pp_N$ and subsequent elements for $n\ge N+1$ are generated by the recurrence formulas as follows.
	\[
	l_n=f_{\p^+}(l_{n-1}), \quad u_n = f_{\p^-}(u_{n-1}).
	\]
We want to show $l_n\le \pp_n\le u_n$ for all $n\ge N$. Note that when $n=N$, it's obviously true by our construction. We now prove the inequality for $n\ge N+1$ by induction. The base case $l_{N+1}\le \pp_{N+1}\le u_{N+1}$ follows directly from (\ref{eqn:bd-bernoulli-main1}). Suppose $l_n\le \pp_n\le u_n$ for some $n\ge N+1$. By \eqref{eqn:bd-bernoulli-main1},
	\[
	u_{n} \le f_{\p^-}(u_{n-1})\le \max_{x\in (0,1)} f_{\p^-}(x) = \frac{1}{4\p^-} \le \frac{1}{2\p^+},\qquad \forall n\ge N+1
	\]
	where the last inequality follows from our choice of $\epsilon$. As a result,   $l_n,\pp_n,u_n\in (0,1/4\p^+]$. The first property in \Cref{obsfa} shows $f_{\p^+}$ and $f_{\p^-}$ are both monotone increasing on $(0,1/2\p^+]$. Therefore,
	\[
	l_{n+1}=f_{\p^+}(l_n)\le f_{\p^+}(\pp_n)\le \pp_{n+1}\le f_{\p^-}(\pp_n)\le f_{\p^-}(u_n)=u_{n+1}
	\]
	We extend $\{l_n\}_{n\ge N}$ and $\{u_n\}_{n\ge N}$ by defining $l_i = p_i = u_i$ for $1\le i\le N-1$. We are interested in the partial sum of these two sequences i.e. $L_n = \sum_{i=1}^n l_n$ and $U_n =\sum_{i=1}^nu_i$. From the previous arguments,  
	\[
	\lim_{n\rightarrow \infty}\frac{L_n}{\log{n}}\le \lim_{n\rightarrow \infty}\frac{\E{S_n}}{\log{n}} \le \lim_{n\rightarrow \infty}\frac{U_n}{\log{n}}
	\]
	 Actually, the limits of the left and right hand sides are equal to $1/\p^+$ and $1/\p^-$ respectively. To see these, we first define two sequence $\{l'_n\}_{n\ge 1}$ and $\{u'_n\}_{n\ge 1}$ such that $l'_{i+1} = f_{\p^+}(l'_i)$ and $u'_{i+1}=f_{\p^-}(u'_i)$. Furthermore, $l'_i = l_i$ and $u'_i=u_i$ for all $i\ge N$. In other words, we extend  $\{l_n\}_{n\ge N}$ and $\{u_n\}_{n\ge N}$ by two sequences $\{l'_n\}_{n\ge 1}$ and $\{u'_n\}_{n\ge 1}$  constructed completely from recurrence formulas. Note that $\{l'_n\}_{n\ge 1}$ and $\{u'_n\}_{n\ge 1}$  are well-defined because the recurrence formulas $f_{\p^+}$ and $f_{\p^-}$ are both monotone increasing on $(0,1/2\p^+]$ and $l_N, u_N \in (0,1/4\p^+]$. It follows that there exist inverse elements $l'_{N-1}, u'_{N-1}\in (0,1/4\p^+]$ such that $l_N= f_{\p^+}(l'_{N-1})$ and $u_N=f_{\p^-}(u'_{N-1})$. We continue this procedure to get $l'_{N-2}, u'_{N-2}$ such that $l'_{N-1}= f_{\p^+}(l'_{N-2})$ and $u'_{N-1}=f_{\p^-}(u'_{N-2})$. Eventually, we can construct $\{l'_n\}_{n\ge 1}$ and $\{u'_n\}_{n\ge 1}$. From \Cref{bernoulli-main}, the partial sums $L'_n = l'_1 +l'_2+\ldots+l'_n$ and $U'_n=u'_1+u'_2+\ldots+u'_n$ satisfy
	\[
	\lim_{n\rightarrow \infty}\frac{L'_n}{\log{n}} = \frac{1}{\p^+} \text{ and } \lim_{n\rightarrow \infty}\frac{U'_n}{\log{n}} =\frac{1}{\p^-}
	\]
	There are only the first $N$ elements in $\{l_n\}_{n\ge 1}$ and $\{u_n\}_{n\ge 1}$ are different from $\{l'_n\}_{n\ge 1}$ and $\{u'_n\}_{n\ge 1}$. The sum of the first $N$ elements divided by $\log n$ goes to 0 as $n\to\infty$. Consequently,
	\[
	\lim_{n\rightarrow \infty}\frac{L_n}{\log{n}} = \frac{1}{\p^+} \text{ and } \lim_{n\rightarrow \infty}\frac{U_n}{\log{n}} =\frac{1}{\p^-}
	\]
This implies that
	\[
	\frac{1}{\p+\epsilon}=\frac{1}{\p^+}\le\lim_{n\rightarrow \infty}\frac{\E{S_n}}{\log{n}}\le \frac{1}{\p^-} = \frac{1}{\p-\epsilon}
	\]
and then taking $\epsilon \rightarrow 0$ yields our desired claim $\lim_{n\rightarrow \infty}\frac{\E{S_n}}{\log{n}} =\frac{1}{\p}$.

\ifdefined \informs
	\ACKNOWLEDGMENT{
\else
	\subsection*{Acknowledgments}
\fi
%This research was initiated when the authors were in the School of Mathematical and Statistical Sciences at Clemson University, USA, during which they were supported by NSF grant DMS-1913294. 
\changes{We thank two referees for their careful reading of the original manuscript and suggestions on clarifying technical details.}
\ifdefined \informs
	}
\else
\fi

%\renewcommand{\theequation}{\thesection.\arabic{equation}}
%\setcounter{equation}{0}
%
%\begin{appendices}
%
%\section{Remarks on Degree Distribution}	\label{sec:degdistr}
%
%\end{appendices}

{
\newrefcontext[sorting=nyt]
\printbibliography
}

\end{document}